\newtheorem{theorem}{Theorem}[section]
\newtheorem{lemma}[theorem]{Lemma}
\newtheorem{proposition}[theorem]{Proposition}
\theoremstyle{definition}
\theoremstyle{remark}
\newtheorem{remark}[theorem]{Remark}
\numberwithin{equation}{section}
\newcommand{\R}{\mathbb{R}}
\newcommand{\rn}{\mathbb{R}^n}
\begin{document}

\title[ ]
{Muckenhoupt-type conditions on weighted Morrey spaces}

\author{Javier Duoandikoetxea and Marcel Rosenthal} 
\address{Universidad del Pa\'is Vasco/Euskal Herriko Unibertsitatea, Departamento de Matem\'a\-ti\-cas, 
Apartado 644, 48080 Bilbao, Spain}

\email{javier.duoandikoetxea@ehu.eus}

\address{Stuttgart, Germany} \email{marcel.rosenthal@uni-jena.de} 
\subjclass[2010]{42B25, 42B35, 46E30, 42B20}

\keywords{Morrey spaces, Muckenhoupt weights, Hardy-Littlewood maximal operator, Calder\'{o}n operator, extrapolation}  

\begin{abstract}
We define a Muckenhoup-type condition on weighted Morrey spaces using the K\"othe dual of the space. We show that the condition is necessary and sufficient for the boundedness of the maximal operator defined with balls centered at the origin on weighted Morrey spaces. A modified condition characterizes the weighted inequalities for the Calder\'on operator. We also show that the Muckenhoup-type condition is necessary and sufficient for the boundedness on weighted local Morrey spaces of the usual Hardy-Littlewood maximal operator, simplifying the previous characterization of Nakamura-Sawano-Tanaka. For the same operator, in the case of global Morrey spaces the condition is necessary and for the sufficiency we add a local $A_p$ condition. We can extrapolate from Lebesgue $A_p$-weighted inequalities to weighted global and local Morrey spaces in a very general setting, with applications to many operators. 
\end{abstract}

\maketitle

\section{Introduction}\label{intro}

There are different versions of weighted Morrey spaces. We consider a quite general form by defining for $1\le p<\infty$ the space $\mathcal M^{p}(\varphi,w)$ as the class of measurable functions in $\rn$ for which  
\begin{equation}\label{normdef}
\|f\|_{\mathcal M^{p}(\varphi,w)} := \sup_{B}\left(\frac 1{\varphi(B)}\int_{B}|f|^p w\right)^{1/p}<\infty,
\end{equation}
where the supremum is taken over all the Euclidean balls in $\rn$. Here $w$ is a weight, that is,  a nonnegative locally integrable function, and $\varphi$ is a function defined on the set of balls in $\rn$ with values in $(0,\infty)$. This definition is essentially the same as the one for generalized weighted Morrey spaces considered by Guliyev in \cite{Gu12}, for instance. We are particularly interested in three distinguished cases of $\varphi$ corresponding to the weighted Morrey spaces introduced by N.~Samko (\cite{Sam09}), Komori-Shirai (\cite{KS09}) and Poelhuis-Torchinsky (\cite{PT15}), but also the scale of spaces introduced in \cite{DR20} will be considered. In general, we need to impose some conditions on $\varphi$ to avoid trivial spaces. We fix some conditions in Section \ref{bi}. 

There are many papers devoted to the study of the boundedness of operators in weighted Morrey spaces (see references in \cite{DR18} and \cite{NST19}, for instance). Nevertheless, to our knowledge it does not exist a characterization of the weights $w$ for which the Hardy-Littlewood maximal operator is bounded on $\mathcal M^{p}(\varphi,w)$, even in simple cases of $\varphi$. Sufficient conditions and necessary conditions are known, but not both simultaneously, see \cite{Ta15}, for instance. The desired characterization is obtained in \cite{NST19}  in the setting of weighted local Morrey spaces, that is, Morrey spaces in which only balls centered at the origin are considered in the definition of the norm (see Section \ref{zortzi}). 

In the case of weighted Lebesgue spaces the characterization of the boundedness of the Hardy-Littlewood maximal operator on $L^p(w)$ is given by the celebrated Muckenhoupt's $A_p$ condition. We consider here a related condition adapted to the Morrey spaces in the following form: a weight $w$ is in $A(\mathcal M^{p}(\varphi))$ if it satifies 
\begin{equation}\label{apmdef}
  [w]_{A(\mathcal M^{p}(\varphi))}:= 
	\sup_B \frac{\|\chi _B\|_{\mathcal M^{p}(\varphi,w)}\|\chi _B\|_{\mathcal M^{p}(\varphi,w)'}}{|B|} <\infty,
\end{equation}
where $\mathcal M^{p}(\varphi,w)'$ stands for the K\"othe dual of $\mathcal M^{p}(\varphi,w)$ (see the definition in Section \ref{bi}). Here $B$ runs over all the balls in $\rn$. Sometimes we restrict the set of balls and in such case we indicate it in the notation. Notice that replacing $\mathcal M^{p}(\varphi,w)$ with $L^p(w)$ we recover the usual $A_p$ condition, because the K\"othe dual of $L^p(w)$ is $L^{p'}(w^{1-p'})$ for $1<p<\infty$. A condition of this style is used in \cite{NST19} and is part of the necessary and sufficient condition for the boundedness of the Hardy-Littlewood maximal operator on local Morrey spaces. A condition involving the dual Morrey space (in a different version) appears also in \cite{Ta15}. The use of Muckenhoupt-type conditions in the Morrey setting was also suggested in \cite{Sam13}.

In this paper we first deal with the maximal operator defined only with balls centered at the origin, that is, 
\begin{equation}\label{centhl}
M_0f(x)=\sup_{r>|x|}\frac 1{|B(0,r)|}\int_{|y|<r}|f(y)|dy.
\end{equation}
The weights for which $M_0$ is bounded on the Lebesgue space $L^p(w)$ are described in \cite{DMO13} through a Muckenhoupt-type condition restricted to balls centered at the origin. Actually, most of the work in \cite{DMO13} is done on $(0,\infty)$ and the extension to $\rn$ is mentioned in Section 6 of that paper. In the Morrey setting we prove that the condition \eqref{apmdef} restricted to such balls centered at the origin is necessary and sufficient for the boundedness of $M_0$ on $\mathcal M^{p}(\varphi,w)$. We prove this theorem in Section \ref{hiru}, but for the necessity we present the result in the more general setting of a basis of balls in $\rn$. The proof of the theorem gives also a quantitative result: the norm of $M_0$ as an operator bounded on $\mathcal M^{p}(\varphi,w)$ is comparable to the constant of the weight given by \eqref{apmdef}. We point out that in the case $p=1$ there are nontrivial weights for which the boundedness on Morrey spaces holds. A similar fact was already observed for Lebesgue spaces in \cite{DMO13}.  

In Section \ref{lau} we work with the $n$-dimensional version of the Calder\'on operator. In $(0,\infty)$ this operator is defined as the sum of the Hardy operator and its adjoint  (and it is equivalent to the Hilbert operator). Its natural extension to $\rn$ is
\begin{equation}\label{calddef}
Sf(x)=\frac 1{|x|^n}\int_{|y|<|x|} f(y) dy + \int_{|y|>|x|} \frac {f(y)}{|y|^n} dy.
\end{equation}
This operator majorizes $M_0$ (up to a constant) and shares its $L^p(w)$ weighted properties for $1<p<\infty$ (\cite{DMO13}). In the Morrey setting we obtain a necessary and sufficient condition together with a sharp quantitative estimate of the operator norm. The condition is in general more restrictive than the one for $M_0$, and we show that at least for some choices of $\varphi$ they do not coincide. 

In Section \ref{bost} we consider the usual Hardy-Littlewood maximal operator, denoted as $M$ in the paper. It can be written as the sum of $M_0$ (given in \eqref{centhl}) and a local maximal operator in the sense of \cite{LS10} (defined below in \eqref{defmaxloc}). It is easy to give a sufficient condition for the boundedness of $M$ on $\mathcal M^{p}(\varphi,w)$, it is enough to add a local (Lebesgue-type) $A_p$ condition to the one obtained for $M_0$.

In Section \ref{zortzi} we deal with weighted local Morrey spaces. We improve the necessary and sufficient condition for the boundedness of $M$ obtained in \cite{NST19} in two ways: we prove that a condition similar to \eqref{apmdef} with local spaces is enough, instead of the two conditions involved in the result of \cite{NST19},  and we get the result also for weak estimates, including $p=1$. In the latter case we also obtain a sharp quantitative estimate in terms of the constant of the weight. We work independently with $M_0$  and the local Hardy-Littlewood maximal operator  and later we use the equivalence of $M$ with the sum of both operators. 

In Section \ref{sei} we prove an abstract result of the type of those studied in \cite{DR18, DR19, DR20}. We start with inequalities for pairs of functions satisfied under the hypotheses of the extrapolation theorem for $A_p$ weights in the Lebesgue setting and we extend such inequalities to the Morrey setting under conditions on the weight which resemble those obtained for the Calder\'on operator in Section \ref{lau}. The extrapolation-type results   are obtained also for the weighted local Morrey spaces, and in this setting they are new.   

In Section \ref{zazpi} we check that many previous results for weighted Morrey spaces are included in our result. More precisely, we compare the results in this paper with those in \cite{DR20}. A new result we deduce from $A(\mathcal M^{p}(\varphi))$ is that the boundedness of the maximal operator on weighted Morrey spaces of Samko type (defined with $\varphi(B)=r_B^{\lambda}$ for $0<\lambda<n$) implies a quantitative reverse doubling condition on $w$, namely,  
\begin{equation}\label{rhsamko}
\frac {w(B_1)}{w(B_2)}\lesssim \left( \frac {|B_1|}{|B_2|} \right)^{\frac {\lambda}{n}},
\end{equation}
for balls $B_1$ and $B_2$ such that $B_1\subset B_2$ (see Proposition \ref{rhprop}). This inequality is weaker than the reverse H\"older assumption introduced in our previous papers. The results in this section are also applied to the weighted local Morrey spaces, and in this setting many of them are new.

\subsection*{Notation} \begin{itemize}
\item Given a set $A$, $\chi_A$ is its characteristic function, $|A|$ is its Lebesgue measure, and for a weight $v$, $v(A)$ is the integral of $v$ over $A$ (but notice that $\varphi$ is not a weight in the definition of the norm of the Morrey space);  
\item for a ball $B$, $r_B$ will denote its radius and $c_B$ its center; for $\lambda>0$, $\lambda B$ denotes the ball with center $c_B$ and radius $\lambda r_B$; $\widetilde{B}$ is the smallest ball centered at the origin containing $B$, that is $B(0, |c_B|+r_B)$;
\item $F\lesssim G$ means that the inequality $F\le c\,G$ holds with a constant $c$ depending only on the ambient parameters; and $F\sim G$ means that both $F\lesssim G$ and $G\lesssim F$ hold.
\end{itemize}


\section{Preliminary results and significant properties of weighted Morrey spaces}\label{bi}

The most popular versions of weighted Morrey spaces considered in the literature are the following:
\begin{itemize}
\item $\varphi(B)=w(B)^{\lambda/n}$ ($0<\lambda<n$), introduced by Komori and Shirai in \cite{KS09};
\item  $\varphi(B)=r_B^{\lambda}$ ($0<\lambda<n$), introduced by N.~Samko in \cite{Sam09}.
\end{itemize}
It is worth mentioning the case $\varphi(B)=w(B) \phi(r_B)^{-1}$ considered by Poelhuis and Torchinsky in \cite{PT15}, which is a weighted version of the spaces in \cite{SST11}. Formally, the generalized weighted Morrey spaces of Guliyev in \cite{Gu12} are the same as the ones defined by \eqref{normdef} except for the notation: our $\varphi(B)$ is to be replaced by $w(B) \psi(B)^{-1}$. In \cite{DR20} we introduced a double-parameter family of weighted Morrey spaces by considering $\varphi(B)=r_B^{\lambda_1}w(B)^{\lambda_2/n}$, which contains the relevant examples, and discussed conditions on $\lambda_1$ and $\lambda_2$ for the non triviality of the space.  

\subsection{Doubling, reverse doubling and reverse H\"older}
We assume that $\varphi$ is doubling, that is,
\begin{equation*}
\varphi(2B)\lesssim \varphi(B).
\end{equation*}
We also assume that it satisfies a reverse doubling property of the form 
\begin{equation}\label{rdcond}
\frac {\varphi(B_1)}{\varphi(B_2)}\lesssim \left( \frac {|B_1|}{|B_2|} \right)^{\delta},
\end{equation}
for some $\delta>0$ and every pair of balls $B_1$ and $B_2$ such that $B_1\subset B_2$.  A doubling measure in $\rn$ always satisfies a reverse doubling property (see \cite{ST89}, for instance), but we are not assuming that $\varphi$ is a measure. Nevertheless, in the relevant examples it is easy to check that $\varphi$ satisfies a reverse doubling property. We write $\varphi\in RD_\delta$ to precise the exponent in \eqref{rdcond}. We write $RD_\delta(a)$ when condition \eqref{rdcond} is required only for balls centered at $a$.

From the reverse doubling property it follows that if $b<1$ the inequality  
\begin{equation}\label{rdplus}
\sum_{j=0}^\infty \varphi(B(0,b^jR))\lesssim  \varphi(B(0,R))
\end{equation}
holds. 

\begin{remark}\label{rm21}
Using these properties of $\varphi$ the following reductions in the norm \eqref{normdef} are obtained using \cite[Proposition 2.1]{DR20}:
 \begin{enumerate}
 \item [(a)] for $\varphi$ doubling we do not need to consider balls $B$ with $0<|c_B|<4r_B$;
 \item [(b)] if moreover $\varphi$ satisfies \eqref{rdplus}, then the supremum in \eqref{normdef} restricted to balls centered at the origin is equivalent to the supremum restricted to balls $B$ such that $|c_B|=4r_B$. 
 \end{enumerate}  
Using both properties together, it is enough to consider \eqref{normdef} with balls $B$ such that $|c_B|>4r_B$. For the local Morrey spaces of Section \ref{zortzi} $|c_B|=4r_B$ will be enough. 

\end{remark}

\begin{remark}\label{rm22}
Let $f$ be supported in a ball $B$. Then in \eqref{normdef} it is enough to take into account balls contained in $2B$. Indeed, if $Q$ is a ball such that $Q\cap B\neq\emptyset$ and $Q\cap(2B)^c\neq\emptyset$, then it is easy to check that $B\subset 5Q$. Therefore,
\begin{equation*}
\frac 1{\varphi(Q)}\int_{Q\cap B}|f|^p w \le \frac{\varphi(5Q)}{\varphi(Q)}\frac{\varphi(B)}{\varphi(5Q)}\frac 1{\varphi(B)}\int_{B}|f|^p w
\lesssim \frac 1{\varphi(B)}\int_{B}|f|^p w,
\end{equation*}
using the doubling and reverse doubling properties of $\varphi$.
\end{remark}

The weight $w$ satisfies a reverse H\"older inequality with index $\sigma$, denoted as $w\in RH_\sigma$, if 
\begin{equation*}
  \left(\frac{1}{|B|}\int_{B} w(x)^\sigma dx \right)^\frac{1}{\sigma}\le \frac{C}{|B|} \int_{B} w(x) dx.
\end{equation*}
If $w\in RH_\sigma$, for any ball $B$ and any measurable $E\subset B$ it holds that 
\begin{equation}\label{ineqRH}
 \frac{w(E)}{w(B)}\le c \left(\frac{|E|}{|B|}\right)^{1/\sigma'}. 
\end{equation}
In particular, $w\in RD_{1/\sigma'}$. Nevertheless, a weight can be reverse doubling without satisfying a reverse H\"older property. Actually, it can be reverse doubling without even being doubling. For more information about the properties of being doubling, reverse doubling and reverse H\"older we refer to \cite{ST89}.

To make sense of condition \eqref{apmdef} we always assume that characteristic functions of balls are in the Morrey space $\mathcal M^{p}(\varphi,w)$. In the particular cases discussed in Section \ref{zazpi} this will be a consequence of the assumptions of the involved parameters.

\subsection{The K\"othe dual (associate space) of a Banach lattice} \label{kothesub}
Let $X$ be a Banach lattice of measurable functions in $\rn$ (a Banach space in which $|g|\le |f|$ a.e. and $f\in X$ implies $g\in X$ and $\|g\|_X\le \|f\|_X$). We define its K\"othe dual as the space $X'$ collecting the measurable functions $g$ such that
\begin{equation}\label{kothe}
\|g\|_{X'}:=\sup \left\{ \int_{\rn} |fg|: f\in X \text{ and } \|f\|_X\le 1\right\}<\infty.
\end{equation}
By the duality property of the Lebesgue spaces, the associate space of $L^p(w)$ is $L^{p'}(w^{1-p'})$ when $1<p<\infty$ and $p'$ is the exponent conjugate to $p$. See \cite{MST18} for a study of the K\"othe dual space of certain Morrey-type spaces. Associate spaces are often defined for Banach function spaces, but Morrey spaces need not be Banach function spaces in the sense of the definition given in \cite[Definitions 1.1 and 1.3]{BS88} as proved in \cite{ST15}. Nevertheless, the definition makes sense for Morrey spaces, which are Banach lattices.

An immediate consequence of \eqref{kothe} is the H\"older-type inequality
\begin{equation*}
\int_{\rn} |fg| \leq \|f\|_X \|g\|_{X'}.
\end{equation*}

The K\"othe bidual $X'':=(X')'$ coincides with $X$ isometrically if and only if $X$ has the Fatou property (\cite[Remark 2 in p.\ 30]{LT79}). This means that if $0\le f_n \uparrow f$ a.e. and $\|f_n\|_X\le 1$, then $f\in X$ and $\lim_{n\to\infty}\|f_n\|_X=\|f\|_X$. Morrey spaces satisfy the Fatou property. When $X''=X$, \eqref{kothe} holds switching the roles of $X$ and $X'$.

\subsection{Weak-type Morrey spaces}\label{subs23}

We define for $1\le p<\infty$ the weighted weak-type Morrey space $W\mathcal M^{p}(\varphi,w)$ as the collection of measurable functions in $\rn$ for which  
\begin{equation}\label{wnormdef}
\aligned
\|f\|_{W\mathcal M^{p}(\varphi,w)} &:= \sup_{B}\frac {\|f\chi_B\|_{L^{p,\infty}(w)}}{\varphi(B)^{1/p}}\\
& = \sup_{B}\sup_{t>0}\frac {t (w(\{x\in B: |f(x)|>t\})^{1/p}}{\varphi(B)^{1/p}}<\infty,
\endaligned
\end{equation}
where the second equality is the definition of the quasinorm in the Lorentz space $L^{p,\infty}(w)$. It is clear that $\mathcal M^{p}(\varphi,w)$ is continuously embedded in $W\mathcal M^{p}(\varphi,w)$. Also that for a measurable set $E$, it holds
\begin{equation}\label{wcharac}
\|\chi_E\|_{W\mathcal M^{p}(\varphi,w)} = \|\chi_E\|_{\mathcal M^{p}(\varphi,w)}.
\end{equation}

\subsection{The local Hardy-Littlewood maximal operator}

For fixed $\kappa\in (0,1)$ we define the basis $\mathcal{B}_{\kappa,\text{loc}}$ as the family of balls $B$ such that $r_B<\kappa |c_B|$. The local Hardy-Littlewood maximal operator associated to $\mathcal{B}_{\kappa,\text{loc}}$ acting on $f$ at $x\ne 0$ is defined as
\begin{equation}\label{defmaxloc}
M_{\kappa,\text{loc}}f(x)=\sup_{x\in \mathcal{B}_{\kappa,\text{loc}}}\frac 1{|B|}\int_{B}|f|.
\end{equation}
C.-C.~Lin and K.~Stempak considered in \cite{LS10} a variant of this operator and characterized its weighted inequalities by a local $A_p$ condition. E. Harboure, O. Salinas, and B. Viviani obtained in \cite{HSV14} a similar result in a more general setting. Their result applied to the operator in \eqref{defmaxloc} says the following:  $M_{\kappa,\text{loc}}$ is bounded on $L^p(w)$ ($1<p<\infty$) if and only if $w$ satisfies the $A_{p,\text{loc}}$ condition 
\begin{equation}\label{defaploc}
[w]_{A_{p,\text{loc}}}=\sup_B \frac{w(B)^{1/p}\,w^{1-p'}(B)^{1/p'}}{|B|}<\infty,
\end{equation}
where the supremum is taken over the balls in $\mathcal{B}_{\kappa,\text{loc}}$. (Usually the $A_p$ constant of the weight $w$ is defined as the $p$-th power of the constant in \eqref{defaploc}.) They also proved that the condition is independent of $\kappa$, so that we do not use this parameter in the notation of the weight class. It is clear that the usual $A_p$ weights are in $A_{p,\text{loc}}$ and it is immediate to check that $|x|^\alpha w(x)\in A_{p,\text{loc}}$ whenever $w\in A_p$ and $\alpha\in \R$. The class $A_{1,\text{loc}}$ is defined with the appropriate modification as usual and characterizes the weighted weak $(1,1)$ type inequality for $M_{\kappa,\text{loc}}$. 

Notice that using the concept of K\"othe dual the $A_p$ condition can be written as 
\begin{equation*} 
	\sup_B \frac{\|\chi _B\|_{L^{p}(w)}\|\chi _B\|_{L^{p}(w)'}}{|B|}<\infty.
\end{equation*}
Under this form it is valid also for $p=1$ because $\|g\|_{L^1(w)'}=\|gw^{-1}\|_\infty$.

We warn the reader that there is another version of local $A_p$ weights. It is associated to the Hardy-Littlewood maximal operator restricted to balls of radius less than $1$ and produces a class of weights which is different from the one we are dealing with here. See \cite{Ry01} for details.

\subsection{Inclusion properties of $A(\mathcal M^{p}(\varphi))$}

The classes $A(\mathcal M^{p}(\varphi))$ are increasing in $p$, like the usual $A_p$ classes.
\begin{proposition}
 Let $1\le q<p<\infty$. Then $A(\mathcal M^{q}(\varphi))\subset A(\mathcal M^{p}(\varphi))$.
\end{proposition}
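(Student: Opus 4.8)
The plan is to prove the quantitative statement $[w]_{A(\mathcal M^{p}(\varphi))}\le[w]_{A(\mathcal M^{q}(\varphi))}^{q/p}$, which immediately yields the inclusion. Fix a ball $B$; everything reduces to comparing $\|\chi_B\|_{\mathcal M^{p}(\varphi,w)}$ and $\|\chi_B\|_{\mathcal M^{p}(\varphi,w)'}$ with their $q$-counterparts.

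The norm of $\chi_B$ is immediate: since $\chi_B^{p}=\chi_B$ and $t\mapsto t^{1/p}$ is increasing,
\[
\|\chi_B\|_{\mathcal M^{p}(\varphi,w)}=\left(\sup_Q\frac{w(Q\cap B)}{\varphi(Q)}\right)^{1/p},
\]
whence $\|\chi_B\|_{\mathcal M^{p}(\varphi,w)}=\|\chi_B\|_{\mathcal M^{q}(\varphi,w)}^{q/p}$. For the K\"othe dual I would argue by convexification: given $h$ with $\|h\|_{\mathcal M^{p}(\varphi,w)}\le1$, set $g=|h|^{p/q}$; reading off the definition of the norm gives $\|g\|_{\mathcal M^{q}(\varphi,w)}^{q}=\|h\|_{\mathcal M^{p}(\varphi,w)}^{p}\le1$. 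Applying H\"older's inequality on $B$ with exponents $p/q$ and $(p/q)'$, and then the H\"older-type inequality $\int fg\le\|f\|_X\|g\|_{X'}$ with $X=\mathcal M^{q}(\varphi,w)$,
\[
\int_B|h|=\int_B g^{q/p}\le\left(\int_B g\right)^{q/p}|B|^{1-q/p}\le\|\chi_B\|_{\mathcal M^{q}(\varphi,w)'}^{q/p}\,|B|^{1-q/p}.
\]
Taking the supremum over all such $h$ gives $\|\chi_B\|_{\mathcal M^{p}(\varphi,w)'}\le|B|^{1-q/p}\,\|\chi_B\|_{\mathcal M^{q}(\varphi,w)'}^{q/p}$.

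Multiplying the two estimates and dividing by $|B|$ turns $\|\chi_B\|_{\mathcal M^{p}(\varphi,w)}\|\chi_B\|_{\mathcal M^{p}(\varphi,w)'}/|B|$ into the $q/p$-th power of the corresponding quantity for $q$; a supremum over $B$ then finishes the argument. Note that the reasoning is purely formal — no property of $\varphi$ is used — and it covers the case $q=1$ verbatim, taking $g=|h|^{p}$.

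The step I expect to be the crux is the K\"othe-dual comparison in the second paragraph: one must make sure that $g=|h|^{p/q}$ genuinely belongs to $\mathcal M^{q}(\varphi,w)$ with the stated norm, and that the elementary H\"older inequality on $B$ — rather than any structural fact about the associate space of a Morrey space — is all that is needed. The latter is legitimate because $\chi_B$ is supported on $B$, so $\int|h\chi_B|$ only involves $h$ restricted to $B$ and the plain H\"older estimate applies there.
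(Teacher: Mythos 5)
Your argument is correct and is essentially the paper's own proof: both rest on the identity $\|\chi_B\|_{\mathcal M^{p}(\varphi,w)}=\|\chi_B\|_{\mathcal M^{q}(\varphi,w)}^{q/p}$ (an instance of $\|f^{p/q}\|_{\mathcal M^{q}(\varphi,w)}=\|f\|_{\mathcal M^{p}(\varphi,w)}^{p/q}$) together with H\"older's inequality on $B$ with exponents $p/q$ and $(p/q)'$ followed by the K\"othe--H\"older inequality, yielding $\|\chi_B\|_{\mathcal M^{p}(\varphi,w)'}\le |B|^{1-q/p}\|\chi_B\|_{\mathcal M^{q}(\varphi,w)'}^{q/p}$. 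Your quantitative conclusion $[w]_{A(\mathcal M^{p}(\varphi))}\le [w]_{A(\mathcal M^{q}(\varphi))}^{q/p}$ is exactly the paper's final displayed estimate with $s=p/q$.
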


\begin{proof}
 Let $s=p/q$. The property
  \begin{equation}\label{normscale}
\|f^s\|_{\mathcal M^{q}(\varphi,w)} = \|f\|_{\mathcal M^{p}(\varphi,w)}^s
\end{equation}
is immediate from the definition of the norm. In particular,
 \begin{equation*}
\|\chi _B\|_{\mathcal M^{q}(\varphi,w)} = \|\chi _B\|_{\mathcal M^{p}(\varphi,w)}^s
\end{equation*}
for any ball $B$.

We also have
\begin{equation*}
\int_B f\le \left(\int_B f^s\right)^{1/s} |B|^{1/s'}\le \|f^s\|_{\mathcal M^{q}(\varphi,w)}^{1/s} \|\chi _B\|_{\mathcal M^{q}(\varphi,w)'}^{1/s} |B|^{1/s'}.
\end{equation*}
Taking the supremum over $f\in \mathcal M^{p}(\varphi,w)$ with norm $\le 1$ and using \eqref{normscale} we deduce
 \begin{equation*}
\|\chi _B\|_{\mathcal M^{p}(\varphi,w)'}\le \|\chi _B\|_{\mathcal M^{q}(\varphi,w)'}^{1/s} |B|^{1/s'}.
\end{equation*}
Therefore,
 \begin{equation*}
\frac{\|\chi _B\|_{\mathcal M^{p}(\varphi,w)}\|\chi _B\|_{\mathcal M^{p}(\varphi,w)'}}{|B|} \le  \left(\frac{\|\chi _B\|_{\mathcal M^{q}(\varphi,w)}\|\chi _B\|_{\mathcal M^{q}(\varphi,w)'}}{|B|}\right)^{1/s}
\end{equation*}
and the proposition is proved.
\end{proof}

\section{The maximal operator on balls centered at the origin}\label{hiru}

\subsection{Necessity of the $A_p$-type Morrey condition}

We prove this result in a more general setting by using a standard technique. Let $\mathcal B$ a collection of balls in $\rn$. For simplicity we assume that their union is $\rn$. Define the maximal operator associated to $\mathcal B$ as
\begin{equation}\label{maxbasis}
M_{\mathcal B}f(x)=\sup_{x\in B, B\in \mathcal B}\frac 1{|B|}\int_{B}|f|.
\end{equation}
Denote as $A_{\mathcal B}(\mathcal M^{p}(\varphi))$ the class of weights satisfying \eqref{apmdef} with the supremum taken on the balls of $\mathcal B$ and let $[w]_{A_{\mathcal B}(\mathcal M^{p}(\varphi))}$ be the constant of the weight $w$.

\begin{theorem}\label{necbasis}
 Let $1\le p<\infty$. If $M_{\mathcal B}$ is bounded from $\mathcal M^{p}(\varphi,w)$ to $W\mathcal M^{p}(\varphi,w)$ (hence also if it is bounded on $\mathcal M^{p}(\varphi,w)$), then $w\in A_{\mathcal B}(\mathcal M^{p}(\varphi))$. Moreover, the operator norm of $M_{\mathcal B}$ is bounded below by $[w]_{A_{\mathcal B}(\mathcal M^{p}(\varphi))}$.
\end{theorem}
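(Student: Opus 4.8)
The plan is to follow the classical testing argument that derives the $A_p$ condition from a weak-type bound for a maximal operator, but carried out in the Morrey/Köthe-dual framework. Fix a ball $B\in\mathcal B$. The key observation is that for any nonnegative $f$ supported in $B$ and any $x\in B$ we have $M_{\mathcal B}f(x)\ge \frac{1}{|B|}\int_B f$, since $B$ itself is an admissible ball in $\mathcal B$ containing $x$. Denote $\alpha_f=\frac{1}{|B|}\int_B f$. Then $B\subset\{x:M_{\mathcal B}f(x)>t\}$ for every $t<\alpha_f$, so taking $t\uparrow\alpha_f$ in the definition \eqref{wnormdef} of the weak-type Morrey norm gives
\begin{equation*}
\alpha_f\,\|\chi_B\|_{\mathcal M^{p}(\varphi,w)}=\alpha_f\,\|\chi_B\|_{W\mathcal M^{p}(\varphi,w)}\le \|M_{\mathcal B}f\|_{W\mathcal M^{p}(\varphi,w)}\le \|M_{\mathcal B}\|\,\|f\|_{\mathcal M^{p}(\varphi,w)},
\end{equation*}
where I used \eqref{wcharac} and $\|M_{\mathcal B}\|$ denotes the operator norm. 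Rearranging,
\begin{equation*}
\frac{1}{|B|}\int_B f\le \|M_{\mathcal B}\|\,\frac{\|f\|_{\mathcal M^{p}(\varphi,w)}}{\|\chi_B\|_{\mathcal M^{p}(\varphi,w)}}.
\end{equation*}

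Next I would take the supremum over all nonnegative $f\in\mathcal M^{p}(\varphi,w)$ supported in $B$ with $\|f\|_{\mathcal M^{p}(\varphi,w)}\le 1$. By the very definition \eqref{kothe} of the Köthe dual applied to $X=\mathcal M^{p}(\varphi,w)$, and using that a general $f\in X$ with support in $B$ can be tested against $g\chi_B$ (so the support restriction is harmless when $g=\chi_B$), this supremum is exactly $\frac{1}{|B|}\|\chi_B\|_{\mathcal M^{p}(\varphi,w)'}$. Hence
\begin{equation*}
\frac{\|\chi_B\|_{\mathcal M^{p}(\varphi,w)}\,\|\chi_B\|_{\mathcal M^{p}(\varphi,w)'}}{|B|}\le \|M_{\mathcal B}\|.
\end{equation*}
Taking the supremum over $B\in\mathcal B$ yields $[w]_{A_{\mathcal B}(\mathcal M^{p}(\varphi))}\le\|M_{\mathcal B}\|<\infty$, which is both the membership $w\in A_{\mathcal B}(\mathcal M^{p}(\varphi))$ and the claimed lower bound on the operator norm.

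The only point requiring a little care is the identification of the supremum of $\frac{1}{|B|}\int_B f$ over $f$ supported in $B$ with unit Morrey norm as $\frac{1}{|B|}\|\chi_B\|_{\mathcal M^{p}(\varphi,w)'}$: one inequality is the Hölder-type inequality $\int_B f=\int_{\rn} f\chi_B\le\|f\|_X\|\chi_B\|_{X'}$, and the reverse follows because in the definition of $\|\chi_B\|_{X'}$ one may restrict to $f\ge 0$ supported in $B$ without loss (replacing $f$ by $|f|\chi_B$ only increases $\int f\chi_B$ and does not increase $\|f\|_X$, since $X$ is a Banach lattice). I would also remark that the argument uses nothing about $\mathcal B$ beyond the trivial containment $B\in\mathcal B$, which is why the result holds at this level of generality, and that it covers $p=1$ as well since the weak-type space and the lattice/Köthe-dual machinery make sense there. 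I do not anticipate a serious obstacle; the main subtlety is purely bookkeeping around the Köthe dual and the lattice property, all of which is already set up in Section \ref{bi}.
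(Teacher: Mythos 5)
Your proof is correct and follows essentially the same testing argument as the paper: the pointwise bound $\alpha_f\chi_B\le M_{\mathcal B}f$, the identity $\|\chi_B\|_{W\mathcal M^{p}(\varphi,w)}=\|\chi_B\|_{\mathcal M^{p}(\varphi,w)}$, and the supremum over $\|f\|_{\mathcal M^{p}(\varphi,w)}\le 1$ to produce $\|\chi_B\|_{\mathcal M^{p}(\varphi,w)'}$. The only (harmless) deviation is that you restrict to $f$ supported in $B$ and then have to argue that this does not change the Köthe-dual supremum; the paper avoids this by testing with arbitrary $f$ and writing $\int_B|f|=\int_{\rn}|f\chi_B|$, which gives $\|\chi_B\|_{\mathcal M^{p}(\varphi,w)'}$ directly from the definition \eqref{kothe}.
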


\begin{proof}
 Fix a ball $B\in \mathcal B$. Then for $x\in \rn$
 \begin{equation*}
\left(\frac 1{|B|}\int_{B}|f|\right) \chi_B(x)\le M_{\mathcal B}f(x).
\end{equation*}
Hence,
\begin{equation*}
\left(\frac 1{|B|}\int_{\rn}\chi_B|f|\right) \|\chi_B\|_{W\mathcal M^{p}(\varphi,w)}\le \|M_{\mathcal B}f\|_{W\mathcal M^{p}(\varphi,w)}\le C \|f\|_{\mathcal M^{p}(\varphi,w)}.
\end{equation*}
Taking the supremum over the functions $f$ for which  $\|f\|_{\mathcal M^{p}(\varphi,w)}\le 1$ and using \eqref{wcharac}, we obtain that $w\in A_{\mathcal B}(\mathcal M^{p}(\varphi))$ and that the operator norm of $M_{\mathcal B}$ is bounded below by $[w]_{A_{\mathcal B}(\mathcal M^{p}(\varphi))}$.
\end{proof}

\subsection{Sufficiency of the $A_p$-type Morrey condition for $M_0$}

The operator $M_0$ has been defined in \eqref{centhl} as the maximal operator associated to the basis $\mathcal B_0$ of balls centered at the origin. We use only the subscript $0$ and not $\mathcal B_0$, and denote as $A_{0}(\mathcal M^{p}(\varphi))$ and $[w]_{A_{0}(\mathcal M^{p}(\varphi))}$ the corresponding class of weights and the constant of $w$.

\begin{theorem}\label{teocero}
  Let $1\le p<\infty$. The following are equivalent:
 \begin{itemize}
\item [(i)] The operator $M_0$ is bounded from $\mathcal M^{p}(\varphi,w)$ to $W\mathcal M^{p}(\varphi,w)$.
\item [(ii)] $w\in A_{0}(\mathcal M^{p}(\varphi))$.
\item [(iii)] The operator $M_0$ is bounded on $\mathcal M^{p}(\varphi,w)$.
\end{itemize}
Moreover, the operator norm of $M_0$ in (i) and (iii) is equivalent to $[w]_{A_{0}(\mathcal M^{p}(\varphi))}$.
\end{theorem}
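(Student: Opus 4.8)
The plan is to prove the cycle (iii) $\Rightarrow$ (i) $\Rightarrow$ (ii) $\Rightarrow$ (iii), keeping track of constants so that the operator norm comes out comparable to $[w]_{A_0(\mathcal M^{p}(\varphi))}$. Implication (iii) $\Rightarrow$ (i) is immediate from the continuous (and norm-nonincreasing) embedding $\mathcal M^{p}(\varphi,w)\hookrightarrow W\mathcal M^{p}(\varphi,w)$ noted in Section~\ref{subs23}. Implication (i) $\Rightarrow$ (ii), together with the lower bound $\|M_0\|\ge[w]_{A_0(\mathcal M^{p}(\varphi))}$, is Theorem~\ref{necbasis} applied to the basis $\mathcal B_0$ of balls centered at the origin. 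So the real content is (ii) $\Rightarrow$ (iii): from $w\in A_0(\mathcal M^{p}(\varphi))$ deduce $\|M_0f\|_{\mathcal M^{p}(\varphi,w)}\lesssim[w]_{A_0(\mathcal M^{p}(\varphi))}\|f\|_{\mathcal M^{p}(\varphi,w)}$.

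For (ii) $\Rightarrow$ (iii) I would first use Remark~\ref{rm21} to reduce the computation of $\|M_0f\|_{\mathcal M^{p}(\varphi,w)}$ to balls $B$ with $|c_B|>4r_B$; write $\rho=|c_B|$, so that $r_B<\rho/4$ and $B\subseteq B(0,\rho+r_B)\subseteq B(0,2\rho)$. The crux is the pointwise estimate that $M_0f$ is essentially constant on such a $B$, namely
\[
M_0f(x)\lesssim[w]_{A_0(\mathcal M^{p}(\varphi))}\,\frac{\|f\|_{\mathcal M^{p}(\varphi,w)}}{\|\chi_{B(0,2\rho)}\|_{\mathcal M^{p}(\varphi,w)}},\qquad x\in B.
\]
To obtain it, note that for $x\in B$ one has $|x|\ge\rho-r_B$, hence $M_0f(x)\le\sup_{r>\rho-r_B}|B(0,r)|^{-1}\int_{B(0,r)}|f|$, and split this supremum at $r=2\rho$. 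On the range $\rho-r_B<r\le2\rho$, enlarge the domain of integration to $B(0,2\rho)$ and use $|B(0,r)|^{-1}\le|B(0,\rho-r_B)|^{-1}\lesssim|B(0,2\rho)|^{-1}$ (doubling of Lebesgue measure), then apply the H\"older-type inequality coming from K\"othe duality, $\int_{B(0,2\rho)}|f|\le\|f\|_{\mathcal M^{p}(\varphi,w)}\|\chi_{B(0,2\rho)}\|_{\mathcal M^{p}(\varphi,w)'}$, followed by the $A_0$ bound $\|\chi_{B(0,2\rho)}\|_{\mathcal M^{p}(\varphi,w)'}\le[w]_{A_0(\mathcal M^{p}(\varphi))}\,|B(0,2\rho)|\,\|\chi_{B(0,2\rho)}\|_{\mathcal M^{p}(\varphi,w)}^{-1}$. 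On the range $r>2\rho$, apply the same H\"older-type inequality and $A_0$ bound with the ball $B(0,r)$ itself, and then replace $\|\chi_{B(0,r)}\|_{\mathcal M^{p}(\varphi,w)}$ by the smaller $\|\chi_{B(0,2\rho)}\|_{\mathcal M^{p}(\varphi,w)}$, using that $r\mapsto\|\chi_{B(0,r)}\|_{\mathcal M^{p}(\varphi,w)}$ is nondecreasing (lattice property).

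With the pointwise bound in hand, raise to the $p$-th power and integrate $w$ over $B$:
\[
\frac{1}{\varphi(B)}\int_B(M_0f)^pw\lesssim[w]_{A_0(\mathcal M^{p}(\varphi))}^p\,\|f\|_{\mathcal M^{p}(\varphi,w)}^p\,\frac{w(B)}{\varphi(B)\,\|\chi_{B(0,2\rho)}\|_{\mathcal M^{p}(\varphi,w)}^p}.
\]
To close the estimate, bound $w(B)\le\varphi(B)\|\chi_B\|_{\mathcal M^{p}(\varphi,w)}^p$ (test the definition of the Morrey norm of $\chi_B$ against the ball $B$ itself) and $\|\chi_B\|_{\mathcal M^{p}(\varphi,w)}\le\|\chi_{B(0,2\rho)}\|_{\mathcal M^{p}(\varphi,w)}$ (lattice property, since $B\subseteq B(0,2\rho)$); the fraction then collapses and the right-hand side becomes $\lesssim[w]_{A_0(\mathcal M^{p}(\varphi))}^p\|f\|_{\mathcal M^{p}(\varphi,w)}^p$. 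Taking the supremum over the admissible balls gives (iii) with the asserted quantitative bound, and combining with Theorem~\ref{necbasis} shows that the operator norm of $M_0$---both on $\mathcal M^{p}(\varphi,w)$ and from $\mathcal M^{p}(\varphi,w)$ to $W\mathcal M^{p}(\varphi,w)$---is comparable to $[w]_{A_0(\mathcal M^{p}(\varphi))}$. The point that really needs care is the splitting of the radial supremum: the inner range of radii must be controlled through the elementary doubling of Lebesgue measure (comparing $|B(0,\rho-r_B)|$ with $|B(0,2\rho)|$), since $r\mapsto\|\chi_{B(0,r)}\|_{\mathcal M^{p}(\varphi,w)}$ need not be a doubling function, whereas the tail range is handled purely by monotonicity of the lattice norm together with the $A_0$ condition. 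The whole argument is uniform in $1\le p<\infty$, which is why nontrivial weights occur already for $p=1$.
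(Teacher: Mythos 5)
Your proposal is correct and follows essentially the same route as the paper: reduce to balls $B$ with $|c_B|>4r_B$, show $M_0f$ is essentially constant on such $B$ by splitting the radial supremum (inner radii controlled by doubling of Lebesgue measure, outer radii by K\"othe--H\"older plus the $A_0$ condition and monotonicity of $r\mapsto\|\chi_{B(0,r)}\|_{\mathcal M^{p}(\varphi,w)}$), and then close with $w(B)\le\varphi(B)\|\chi_B\|_{\mathcal M^{p}(\varphi,w)}^p$. The only differences from the paper's proof are cosmetic (splitting at $2|c_B|$ rather than $|c_B|+r_B$).
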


\begin{proof}
We only need to prove that (ii) implies (iii). Indeed, Theorem \ref{necbasis} says that (i) implies (ii), and (iii) implies (i) due to the embedding of $\mathcal M^{p}(\varphi,w)$ into $W\mathcal M^{p}(\varphi,w)$.

 Let $B$ be a ball such that $|c_B|>4r_B$ and $f$ a locally integrable function. By definition, $M_0f$ is radially decreasing. As a consequence, it is essentially constant in $B$ in the sense that
 \begin{equation*}
\sup_{y\in B} M_0f(y) \le C \inf_{y\in B} M_0f(y),
\end{equation*}
where $C$ is a dimensional constant. Indeed, to compute $M_0f(y)$ we only need to take into account balls $B(0,\rho)$ with $\rho>|c_B|-r_B$. But $|c_B|+r_B<2(|c_B|-r_B)$ and the average of $f$ on  $B(0,\rho)$ with $|c_B|-r_B<\rho<|c_B|+r_B$ is bounded by a constant times the average on $B(0,|c_B|+r_B)$. Hence, for $y\in B$,
 \begin{equation*}
M_0f(y) \sim  \sup_{\rho>|c_B|+r_B}\frac 1{|B(0,\rho)|}\int_{B(0,\rho)}|f|.
\end{equation*} 
 Let $B'$ be a ball centered at the origin and containing $B$. Then  
\begin{equation*}
\aligned
 \frac 1{|B'|}&\int_{B'}|f|\le 
\frac 1{|B'|} \|f\|_{\mathcal M^{p}(\varphi,w)}  \|\chi _{B'}\|_{\mathcal M^{p}(\varphi,w)'}\\
 & \le \|f\|_{\mathcal M^{p}(\varphi,w)} \frac{[w]_{A_{0}(\mathcal M^{p}(\varphi))}}{\|\chi _{B'}\|_{\mathcal M^{p}(\varphi,w)}}
 \le \|f\|_{\mathcal M^{p}(\varphi,w)} \frac{[w]_{A_{0}(\mathcal M^{p}(\varphi))}}{\|\chi _{B}\|_{\mathcal M^{p}(\varphi,w)}}.
\endaligned
\end{equation*}
(We used the trivial inequality $\|\chi _B\|_{\mathcal M^{p}(\varphi,w)}\le \|\chi _{B'}\|_{\mathcal M^{p}(\varphi,w)}$.) As a consequence, 
\begin{equation*}
\sup_{y\in B} M_0f(y) \lesssim  \|f\|_{\mathcal M^{p}(\varphi,w)} \frac{[w]_{A_{0}(\mathcal M^{p}(\varphi))}}{\|\chi _{B}\|_{\mathcal M^{p}(\varphi,w)}}.
\end{equation*} 
Using this estimate,
\begin{equation*}
\aligned
& \left(\frac 1{\varphi(B)}\int_B (M_0f)^p w\right)^{1/p}\\
&\qquad  \lesssim \left(\frac 1{\varphi(B)}\int_B w\right)^{1/p} \|f\|_{\mathcal M^{p}(\varphi,w)} \frac{[w]_{A_{0}(\mathcal M^{p}(\varphi))}}{\|\chi _{B}\|_{\mathcal M^{p}(\varphi,w)}} \\
&\qquad \lesssim \|f\|_{\mathcal M^{p}(\varphi,w)} [w]_{A_{0}(\mathcal M^{p}(\varphi))}.
\endaligned
\end{equation*}
This proves the sufficiency of $w\in A_{0}(\mathcal M^{p}(\varphi))$ and also that the operator norm of $M_0$ on $\mathcal M^{p}(\varphi,w)$ is bounded above by $C[w]_{A_0(\mathcal M^{p}(\varphi))}$. 
\end{proof}

\begin{remark}
Notice that the proof shows that for any ball $B'$ centered at the origin the integral of $f$ over $B'$ is finite for $f\in \mathcal M^{p}(\varphi,w)$ and $w\in A_{0}(\mathcal M^{p}(\varphi))$. Hence, assuming in the proof that $f$ is locally integrable is not an extra condition. We need the local integrability of $f$ to define the Hardy-Littlewood maximal operator.
\end{remark}

\begin{remark}
The result in Theorem \ref{teocero} is valid for $p=1$. It was proved in \cite{DMO13} that for nontrivial weights $w$, the operator $M_0$ is bounded on $L^1(w)$, although it is unbounded on the unweighted space $L^1$. Also in the Morrey setting nontrivial results can be established for $p=1$, even with $w\equiv 1$. On the one hand, we have the trivial estimate 
\begin{equation*}
\|\chi _{B}\|_{\mathcal M^{1}(\varphi,1)'}\le \varphi(B).
\end{equation*}
On the other hand, in many cases it holds
\begin{equation*}
\|\chi _{B}\|_{\mathcal M^{1}(\varphi,1)}\lesssim \frac{|B|}{\varphi (B)}.
\end{equation*}
This holds in particular for the usual Morrey spaces, where $\varphi(B)=r_B^\lambda$ with $0<\lambda<n$.

Notice also that the characterization of the weighted inequalities in the strong case and in the weak case are the same for all values of $p$. This holds also in the Lebesgue setting for $p>1$, but not for $p=1$ (see  \cite{DMO13}).
\end{remark}

\section{The Calder\'on operator}\label{lau}

The $n$-dimensional version of the Calder\'on operator has been defined in \eqref{calddef}. It is pointwise equivalent to the $n$-dimensional version of the Hilbert operator (not to be confused with the Hilbert transform) defined as
\begin{equation*}
\widetilde S f(x)=\int_{\rn}\frac {f(y)}{|x|^n+|y|^n}dy.
\end{equation*}
These operators majorize pointwise (up to a constant) $M_0$ for nonnegative $f$, and share its weighted boundedness properties on $L^p(w)$ (see \cite{DMO13}).

\begin{theorem}\label{morcal}
Let $1\le p<\infty$. The operator $S$ is bounded on $\mathcal M^{p}(\varphi,w)$ if and only if
\begin{equation}\label{calsuf}
\sup_B \frac 1{|B|} \|\chi _B\|_{\mathcal M^{p}(\varphi,w)}  \|M_0(\chi _{B})\|_{\mathcal M^{p}(\varphi,w)'} <\infty,
\end{equation}
where the supremum is taken over the balls centered at the origin. Moreover, the operator norm of $S$ is comparable to the left-hand side of \eqref{calsuf}.
\end{theorem}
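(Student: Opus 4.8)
The plan is to prove both directions by comparing $S$ with $M_0$ pointwise and exploiting the duality structure that already appeared in the proof of Theorem \ref{teocero}. For the \emph{sufficiency} of \eqref{calsuf}, I would fix a ball $B$ with $|c_B|>4r_B$ (legitimate by Remark \ref{rm21}) and estimate $\|(Sf)\chi_B\|$-type averages. Since $Sf(x)$ depends only on $|x|$ and is essentially monotone on the annulus swept out by $B$ — exactly as $M_0f$ was shown to be essentially constant on $B$ in the proof of Theorem \ref{teocero} — it suffices to control $\frac 1{|B'|}\int_{B'}|f|$ and $\int_{|y|>|c_B|}|f(y)|\,|y|^{-n}\,dy$ for balls $B'$ centered at the origin containing $B$. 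The first term is handled precisely as in Theorem \ref{teocero}. The second term is the new ingredient: writing the outer integral dyadically as $\sum_{k\ge 0}\int_{2^kr_0<|y|<2^{k+1}r_0}|f||y|^{-n}\,dy$ with $r_0\sim|c_B|$, each piece is $\lesssim \frac 1{|B_k|}\int_{B_k}|f|$ where $B_k=B(0,2^{k+1}r_0)$, and by Hölder and \eqref{apmdef}-type control this is $\lesssim \|f\|_{\mathcal M^p(\varphi,w)}\,[\,\cdot\,]/\|\chi_{B_k}\|_{\mathcal M^p(\varphi,w)}$. The key point is that $M_0(\chi_B)(x)\sim (r_B/|x|)^n$ for $|x|\gtrsim r_B$, so that $\|M_0(\chi_B)\|_{\mathcal M^p(\varphi,w)'}$ is comparable to a weighted sum $\sum_k 2^{-kn}\|\chi_{B_k}\|_{\mathcal M^p(\varphi,w)'}$ (up to the contribution near $B$ itself), which is exactly the quantity that bounds the dyadic series after applying \eqref{calsuf} termwise; summing a geometric-type series (using reverse doubling \eqref{rdcond} to absorb the growth of $\|\chi_{B_k}\|$ against $2^{-kn}$) gives the bound $\lesssim \|f\|_{\mathcal M^p(\varphi,w)}$ times the supremum in \eqref{calsuf}. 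Multiplying by $(\frac 1{\varphi(B)}\int_B w)^{1/p}=\|\chi_B\|_{\mathcal M^p(\varphi,w)}/\varphi(B)^{1/p}$-type factors and collecting terms finishes the strong bound on $\mathcal M^p(\varphi,w)$.

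For the \emph{necessity}, I would test the boundedness of $S$ on functions of the form $f=\chi_B$ with $B$ centered at the origin. Since $S$ majorizes $M_0$ pointwise up to a constant, $\|S(\chi_B)\|_{\mathcal M^p(\varphi,w)}\gtrsim \|M_0(\chi_B)\|_{\mathcal M^p(\varphi,w)}$ is not quite what is needed; instead I would use the lower bound coming directly from the adjoint part of $S$: for $|x|>r_B$, $S(\chi_B)(x)\ge \int_{|y|<r_B}|y|^{-n}\,dy$ diverges, so one must be slightly more careful — the useful bound is $S(\chi_B)(x)\ge \frac 1{|x|^n}\int_{|y|<|x|}\chi_B = \frac{|B|}{|B(0,|x|)|}$ for $|x|>r_B$, i.e. $S(\chi_B)\gtrsim M_0(\chi_B)$ pointwise, and also $S(\chi_B)\ge \chi_B$. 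To extract \eqref{calsuf} I would instead pair: for any $g$ with $\|g\|_{\mathcal M^p(\varphi,w)'}\le 1$ supported appropriately, test $S$ on a function adapted to $g$ and use the self-adjointness of $S$ (it is, up to the kernel symmetry, essentially self-adjoint, and $\widetilde S$ is exactly self-adjoint). Concretely, $\int (S\chi_B)\,h = \int \chi_B\,(S^*h)$ and $S^*h \sim \widetilde S h \gtrsim M_0 h$; choosing $h$ to nearly realize $\|M_0(\chi_{B})\|_{\mathcal M^p(\varphi,w)'}$ via the Köthe-dual definition and running the Theorem \ref{necbasis} testing argument yields $\frac 1{|B|}\|\chi_B\|_{\mathcal M^p(\varphi,w)}\|M_0(\chi_B)\|_{\mathcal M^p(\varphi,w)'}\lesssim \|S\|$.

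The main obstacle I anticipate is the necessity direction: making the duality/self-adjointness argument rigorous requires knowing that the Köthe bidual coincides with $\mathcal M^p(\varphi,w)$ (granted, by the Fatou property noted in Section \ref{kothesub}) and that $M_0(\chi_B)$ genuinely lies in the dual so that testing against near-extremal $h$ is legitimate; one also has to handle the formal divergence of the $\int_{|y|<|x|}f(y)|y|^{-n}$ piece of $S$ on characteristic functions, which forces working with the $S^*\sim M_0$ comparison rather than with $S\chi_B$ directly. On the sufficiency side the delicate point is the summation of the dyadic series: one needs the reverse doubling \eqref{rdcond} of $\varphi$ (equivalently, the growth rate of $\|\chi_{B_k}\|_{\mathcal M^p(\varphi,w)}$ and the decay of $\|M_0(\chi_{B_j})\|_{\mathcal M^p(\varphi,w)'}$) to be compatible with the $2^{-kn}$ kernel decay, and to verify that $\|M_0(\chi_B)\|_{\mathcal M^p(\varphi,w)'}$ really is comparable to the dyadic sum $\sum_j 2^{-jn}\|\chi_{B_j}\|_{\mathcal M^p(\varphi,w)'}$ — this last comparison is where the specific structure of $M_0(\chi_B)$ as a radial step-like function enters and must be justified from the Köthe-dual definition together with the lattice property.
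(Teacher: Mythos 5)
There is a genuine gap in your sufficiency argument. After the dyadic decomposition you apply the K\"othe--H\"older inequality \emph{termwise}, obtaining a bound of the form $\sum_k 2^{-kn}\|\chi_{B_k}\|_{\mathcal M^{p}(\varphi,w)'}/|B_0|$, and you then need this to be controlled by $\|M_0(\chi_{B_0})\|_{\mathcal M^{p}(\varphi,w)'}/|B_0|$ in order to invoke \eqref{calsuf}. But the comparison you invoke, $\sum_k 2^{-kn}\|\chi_{B_k}\|_{\mathcal M^{p}(\varphi,w)'}\lesssim \|M_0(\chi_{B_0})\|_{\mathcal M^{p}(\varphi,w)'}$, is a reverse triangle inequality for the (essentially disjointly supported) pieces of the radial step function $M_0(\chi_{B_0})\sim\sum_k 2^{-kn}\chi_{B_k\setminus B_{k-1}}$; the lattice property and the definition \eqref{kothe} only give the opposite inequality. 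It genuinely fails: already for $X=L^p(w)$, where $X'=L^{p'}(\sigma)$ with $\sigma=w^{1-p'}$, one has $\|M_0(\chi_{B_0})\|_{X'}\sim\bigl(\sum_k 2^{-knp'}\sigma(B_k\setminus B_{k-1})\bigr)^{1/p'}$, and choosing $\sigma(B_k\setminus B_{k-1})=2^{knp'}k^{-2}$ makes this finite while $\sum_k 2^{-kn}\sigma(B_k)^{1/p'}$ diverges. So your plan proves sufficiency of a strictly stronger ``dyadic sum'' condition, not of \eqref{calsuf}. The fix is not to decompose at all: for $x\in B$ with $|c_B|>4r_B$ one has $|x|\sim r_{\widetilde B}$, hence the kernel of $S$ satisfies $\min(|x|^{-n},|y|^{-n})\sim M_0(\chi_{\widetilde B})(y)/|\widetilde B|$ pointwise in $y$, so that $Sf(x)\sim |\widetilde B|^{-1}\int M_0(\chi_{\widetilde B})\,f$; a \emph{single} application of the H\"older inequality for the K\"othe dual to this integral produces exactly the quantity $\|M_0(\chi_{\widetilde B})\|_{\mathcal M^{p}(\varphi,w)'}$ appearing in \eqref{calsuf}. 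This is the step your outline is missing, and it is where the hypothesis enters in the correct form.

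Your necessity direction is recoverable but needlessly roundabout, and as written it is quantitatively lossy. Pairing $\int (S\chi_B)h=\int \chi_B (Sh)$ and then estimating $\int_B Sh\le \|Sh\|_{\mathcal M^{p}(\varphi,w)}\|\chi_B\|_{\mathcal M^{p}(\varphi,w)'}$ yields $\|M_0(\chi_B)\|_{\mathcal M^{p}(\varphi,w)'}\lesssim\|S\|\,\|\chi_B\|_{\mathcal M^{p}(\varphi,w)'}$, which must then be combined with the $A_0(\mathcal M^{p}(\varphi))$ condition (itself obtained from Theorem \ref{necbasis}) and gives the supremum in \eqref{calsuf} bounded by $\|S\|^2$, destroying the claimed comparability of the operator norm with \eqref{calsuf}. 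The direct route avoids duality altogether: for $B$ centered at the origin and $x\in B$ one has $\min(|x|^{-n},|y|^{-n})\ge\min(r_B^{-n},|y|^{-n})\sim M_0(\chi_B)(y)/|B|$, hence $S(|f|)(x)\ge c\,|B|^{-1}\int M_0(\chi_B)|f|$ for every $x\in B$; taking $\mathcal M^{p}(\varphi,w)$ norms and the supremum over $\|f\|_{\mathcal M^{p}(\varphi,w)}\le 1$ gives \eqref{calsuf} with constant $\lesssim\|S\|$ in one line.
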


\begin{proof}
\textit{Necessity.} Let $B$ be a ball centered at the origin. Then
\begin{equation*}
M_0(\chi _{B})(y)\sim |B| \min \left(\frac 1{|B|}, \frac 1{|y|^n}\right).
\end{equation*}
Hence,
\begin{equation*}
\aligned
\frac 1{|B|}\left(\int_{\rn} M_0(\chi _{B}) |f|\right)\chi_B(x)&\lesssim \int_{\rn} |f(y)|\min  \left(\frac 1{|x|^n}, \frac 1{|y|^n}\right)dy \\
&=S(|f|)(x).
\endaligned
\end{equation*}
Assuming that $S$ is bounded on $\mathcal M^{p}(\varphi,w)$ we have
\begin{equation*}
\frac 1{|B|}\left(\int_{\rn} M_0(\chi _{B}) |f|\right)\|\chi_B\|_{\mathcal M^{p}(\varphi,w)}\lesssim \|S(|f|)\|_{\mathcal M^{p}(\varphi,w)}\lesssim \|f\|_{\mathcal M^{p}(\varphi,w)}.
\end{equation*}
Taking the supremum over the functions $f$ with $\|f\|_{\mathcal M^{p}(\varphi,w)}\le 1$ we obtain \eqref{calsuf}.

\textit{Sufficiency.} Let $B$ be a ball such that $|c_B|>4r_B$. Without loss of generality we assume that $f$ is nonnegative. By an argument similar to the one in Theorem \ref{teocero} we deduce that the value of $Sf(x)$ for $x\in B$ is essentially constant, namely,  
\begin{equation*}
Sf(x)\sim \frac 1{|\widetilde{B}|}\int_{|y|<R}f(y)dy+ \int_{|y|>R}\frac {f(y)}{|y|^n}dy
\sim \int_{\rn}  \frac {M_0(\chi_{\widetilde{B}})}{|\widetilde{B}|} \,f,
\end{equation*}
where $\widetilde{B}$ is the smallest ball centered at the origin containing $B$. 
 Then we have 
 \begin{equation*}
 \aligned
\left(\frac 1{\varphi(B)}\int_B |Sf|^p w\right)^{1/p} 
&\lesssim \left(\frac 1{\varphi(B)}\int_B w\right)^{1/p} \int_{\rn} \frac {M_0(\chi_{\widetilde{B}})}{|\widetilde{B}|}\,f\\
&\lesssim  \|\chi _B\|_{\mathcal M^{p}(\varphi,w)} \frac {\|M_0(\chi_{\widetilde{B}})\|_{\mathcal M^{p}(\varphi,w)'}}{|\widetilde{B}|} \|f\|_{\mathcal M^{p}(\varphi,w)}.
\endaligned
\end{equation*}
Replacing $B$ with $\widetilde{B}$ in the right-hand side we deduce that condition \eqref{calsuf} is sufficient.
\end{proof}

\begin{remark}\label{rem42}
It is clear that \eqref{calsuf} implies $A_{0}(\mathcal M^{p}(\varphi))$. This can be also deduced from the pointwise  inequality $M_0f(x)\le S(|f|)(x)$. But the condition $A_{0}(\mathcal M^{p}(\varphi))$ of Theorem \ref{teocero} is not sufficient in general for the boundedness of the Calder\'on operator on $\mathcal M^{p}(\varphi,w)$.  Let us consider the case $\varphi(B)=r(B)^\lambda$, corresponding to the spaces of Samko type. Then $w(x)=|x|^{\lambda-n}\in A_{0}(\mathcal M^{p}(\varphi))$, but $S$ is not bounded on $\mathcal M^{p}(\varphi,w)$. 
 
 To show that  $|x|^{\lambda-n}\in A_{0}(\mathcal M^{p}(\varphi))$ it would be enough to invoke Tanaka's result in \cite{Ta15} saying that $M$ (which is bigger than $M_0$) is bounded for such weighted Morrey space (this result is also proved in \cite{DR19}). But we can also check it directly. To this end, let $B$ be a ball centered at the origin and $Q$ a ball with $|c_Q|>4r_Q$. Then
\begin{equation*}
\frac 1{r^\lambda} \int_{Q\cap B} |y|^{\lambda -n} dy\lesssim \left(\frac {r_Q}{|c_Q|}\right)^{n-\lambda}\le 1. 
\end{equation*}
Hence, $\|\chi_B\|_{\mathcal M^{p}(\varphi,w)}\lesssim 1$.

On the other hand, 
\begin{equation*}
\aligned
\int_B f & \le \left(\frac 1{r_B^\lambda} \int_B |f(y)|^p  |y|^{\lambda -n} dy\right)^{1/p} r_B^{\lambda/p} \left(\int_B |y|^{(n-\lambda)(p'-1)} dy\right)^{1/p'} \\
& \lesssim \|f\|_{\mathcal M^{p}(\varphi,w)} \, r_B^{n} \sim \|f\|_{\mathcal M^{p}(\varphi,w)} \, |B|.
\endaligned
\end{equation*}
Taking the supremum over the functions $f$ such that $\|f\|_{\mathcal M^{p}(\varphi,w)}\le 1$, we deduce that $\|\chi_B\|_{\mathcal M^{p}(\varphi,w)'}\lesssim |B|$, and the condition $A_{0}(\mathcal M^{p}(\varphi))$ is satisfied.

Consider now $B=B(0,1)$. For $|x|<1$, 
\begin{equation*}
S(\chi_B)(x)= \frac 1{|x|^n}\int_{|y|<|x|}dy+ \int_{|x|<|y|<1}\frac {1}{|y|^n}dy\sim 1-\log |x|.
\end{equation*}
But $S(\chi_B)\notin \mathcal M^{p}(\varphi,w)$. Indeed, 
\begin{equation*}
\left(\frac 1{(|x|/4)^\lambda} \int_{B(x,|x|/4)} |1-\log |y||^p |y|^{\lambda -n} dy\right)^{1/p}\sim |1-\log |x||
\end{equation*}
is unbounded when $x$ is close to $0$.

Notice that if $M_0$ is bounded on $\mathcal M^{p}(\varphi,w)'$, then \eqref{calsuf} is equivalent to $w\in A_{0}(\mathcal M^{p}(\varphi))$. This counterexample shows that in the case $\varphi(B)=r(B)^\lambda$ and $w(x)=|x|^{\lambda-n}$, $M_0$ is bounded on $\mathcal M^{p}(\varphi,w)$, but not on $\mathcal M^{p}(\varphi,w)'$.
\end{remark}

The last conclusion can also be deduced by means of a result similar to those in \cite{Ru18}, which is easily obtained in this setting: \textit{$M_0$ is bounded on $X$ and $X'$ if and only if $S$ is bounded on $X$, whenever $X$ has the Fatou property}. This is a consequence of the following:
\begin{itemize}
\item [(i)] $S$ bounded on $X$ is equivalent to $S$ bounded on $X'$, because $S$ is self-adjoint;
\item [(ii)] $M_0f\le Sf$ pointwise for nonnegative $f$;
\item [(iii)] $S$ is the sum of the Hardy operator and its adjoint, and $M_0$ is a pointwise bound for the Hardy operator.
\end{itemize}
Based on this we deduce the following two results: 
\begin{enumerate}
\item If $M_0$ is bounded on $\mathcal M^{p}(\varphi,w)$ and $S$ is not, then $M_0$ is not bounded on $\mathcal M^{p}(\varphi,w)'$.
\item If $w\in A_{0}(\mathcal M^{p}(\varphi))$ and 
\begin{equation*}
\|M_0(\chi _{B})\|_{\mathcal M^{p}(\varphi,w)'} \lesssim \|\chi _{B}\|_{\mathcal M^{p}(\varphi,w)'} 
\end{equation*}
for balls centered at the origin, then $M_0$ is bounded on $\mathcal M^{p}(\varphi,w)'$.
\end{enumerate}

\section{The Hardy-Littlewood maximal operator}\label{bost}

In this section we deal with the full Hardy-Littlewood maximal operator. We prove that it is bounded on  $\mathcal M^{p}(\varphi,w)$ if the weight satisfies  \eqref{apmdef} and a local $A_p$ condition \eqref{defaploc}. In Section \ref{zazpi} we show that many weights satisfy this joint sufficient condition.

\begin{theorem}\label{teohl}
The following results hold for $M$. 
\begin{enumerate}
\item  Let $1\le p<\infty$. If $M$ is bounded from $\mathcal M^{p}(\varphi,w)$ to $W\mathcal M^{p}(\varphi,w)$, then $w\in A(\mathcal M^{p}(\varphi))$. 
\item Let $1< p<\infty$. If $w\in A(\mathcal M^{p}(\varphi))$ and $w(\cdot + a)\in A_{p,\textrm{loc}}(\rn)$ for some $a\in \rn$, then $M$ is bounded on $\mathcal M^{p}(\varphi,w)$.
\item If $w\in A(\mathcal M^{1}(\varphi))$ and $w(\cdot + a)\in A_{1,\textrm{loc}}(\rn)$ for some $a\in \rn$, then $M$ is bounded from $\mathcal M^{1}(\varphi,w)$ to $W\mathcal M^{1}(\varphi,w)$.
\end{enumerate}
\end{theorem}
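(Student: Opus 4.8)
My plan is the following.

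Part (1) needs no new argument: it is Theorem \ref{necbasis} applied to the basis $\mathcal B$ of \emph{all} balls in $\rn$, for which $M_{\mathcal B}=M$ and $A_{\mathcal B}(\mathcal M^p(\varphi))=A(\mathcal M^p(\varphi))$. For (2) and (3) I would first reduce to $a=0$: translating $f$, $w$, $\varphi$ by $f(\cdot-a)$, $w(\cdot+a)$ and $\varphi(\cdot+a)\colon B\mapsto\varphi(B+a)$ turns the assertion into an equivalent one ($g\mapsto g(\cdot-a)$ being an isometry between the corresponding Morrey and weak Morrey spaces and $M$ being translation invariant), it preserves the standing assumptions, and a short computation gives $[w(\cdot+a)]_{A(\mathcal M^p(\varphi(\cdot+a)))}=[w]_{A(\mathcal M^p(\varphi))}$; so I may assume $w\in A(\mathcal M^p(\varphi))$ and $w\in A_{p,\mathrm{loc}}(\rn)$ (resp.\ $A_{1,\mathrm{loc}}(\rn)$). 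Next I would use the pointwise bound $Mf(x)\lesssim M_0f(x)+M_{\kappa,\mathrm{loc}}f(x)$ for a fixed $\kappa\in(0,1)$ (say $\kappa=\tfrac12$), obtained by classifying the balls $Q\ni x$ in the definition of $M$: if $r_Q<\tfrac{\kappa}{2}|x|$ then $r_Q<\kappa|c_Q|$, so $Q\in\mathcal B_{\kappa,\mathrm{loc}}$, and otherwise $Q\subseteq B(0,\rho)$ with $\rho\sim r_Q>|x|$, so $|Q|^{-1}\int_Q|f|\lesssim M_0f(x)$. Since $A(\mathcal M^p(\varphi))\subseteq A_0(\mathcal M^p(\varphi))$ (a supremum over fewer balls), Theorem \ref{teocero} handles the $M_0$ term — strongly for $1<p<\infty$, and a fortiori in weak form for $p=1$ — so everything reduces to the boundedness of $M_{\kappa,\mathrm{loc}}$ on $\mathcal M^p(\varphi,w)$, resp.\ from $\mathcal M^1(\varphi,w)$ to $W\mathcal M^1(\varphi,w)$.

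For $M_{\kappa,\mathrm{loc}}$ with $1<p<\infty$ I would, by Remark \ref{rm21}, bound $\bigl(\varphi(B)^{-1}\int_B(M_{\kappa,\mathrm{loc}}f)^pw\bigr)^{1/p}$ only over balls $B$ with $|c_B|>4r_B$. Fixing such a $B$ and $x\in B$, I would split the supremum defining $M_{\kappa,\mathrm{loc}}f(x)$ according to whether the competing ball $Q$ satisfies $Q\subseteq\lambda B$ or $Q\not\subseteq\lambda B$, with $\lambda$ a fixed large dilation constant. If $Q\not\subseteq\lambda B$, then $Q$ meets both $B$ and $(\lambda B)^c$, which forces $r_Q\gtrsim r_B$ and $B\subseteq 2Q$; hence, by the H\"older inequality for the K\"othe dual and the definition of $A(\mathcal M^p(\varphi))$,
\[
\frac{1}{|Q|}\int_Q|f|\ \lesssim\ \frac{1}{|2Q|}\int_{2Q}|f|\ \le\ \frac{\|\chi_{2Q}\|_{\mathcal M^p(\varphi,w)'}}{|2Q|}\,\|f\|_{\mathcal M^p(\varphi,w)}\ \le\ [w]_{A(\mathcal M^p(\varphi))}\,\frac{\|f\|_{\mathcal M^p(\varphi,w)}}{\|\chi_{B}\|_{\mathcal M^p(\varphi,w)}},
\]
where in the last step I used $B\subseteq 2Q$; since $\bigl(\varphi(B)^{-1}\int_Bw\bigr)^{1/p}\le\|\chi_B\|_{\mathcal M^p(\varphi,w)}$, this part of $M_{\kappa,\mathrm{loc}}f$ contributes $\lesssim[w]_{A(\mathcal M^p(\varphi))}\|f\|_{\mathcal M^p(\varphi,w)}$ to the quantity to be estimated. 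If $Q\subseteq\lambda B$, the corresponding part of $M_{\kappa,\mathrm{loc}}f(x)$ is $\le M_{\kappa,\mathrm{loc}}(f\chi_{\lambda B})(x)$, and, using the boundedness of $M_{\kappa,\mathrm{loc}}$ on $L^p(w)$ (valid since $w\in A_{p,\mathrm{loc}}$, by the result of \cite{HSV14} recalled above) together with the doubling of $\varphi$,
\[
\frac{1}{\varphi(B)}\int_B\bigl(M_{\kappa,\mathrm{loc}}(f\chi_{\lambda B})\bigr)^pw\ \le\ \frac{1}{\varphi(B)}\int_{\rn}\bigl(M_{\kappa,\mathrm{loc}}(f\chi_{\lambda B})\bigr)^pw\ \lesssim\ \frac{1}{\varphi(B)}\int_{\lambda B}|f|^pw\ \lesssim\ \frac{\varphi(\lambda B)}{\varphi(B)}\,\|f\|_{\mathcal M^p(\varphi,w)}^p\ \lesssim\ \|f\|_{\mathcal M^p(\varphi,w)}^p.
\]
Adding the two contributions and taking the supremum over the admissible $B$ yields $\|M_{\kappa,\mathrm{loc}}f\|_{\mathcal M^p(\varphi,w)}\lesssim\|f\|_{\mathcal M^p(\varphi,w)}$, and with the $M_0$ bound this proves (2).

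Part (3) would run along the same lines with $p=1$ and $L^{1,\infty}(w)$ replacing $L^p(w)$: the $M_0$ term is again covered by Theorem \ref{teocero}; for $M_{\kappa,\mathrm{loc}}$ the part coming from $Q\not\subseteq\lambda B$ is a constant $\lesssim[w]_{A(\mathcal M^1(\varphi))}\|f\|_{\mathcal M^1(\varphi,w)}/\|\chi_B\|_{\mathcal M^1(\varphi,w)}$ on $B$, whose $L^{1,\infty}(w)$-norm over $B$ is that constant times $w(B)$ and is absorbed via $\|\chi_B\|_{\mathcal M^1(\varphi,w)}\ge w(B)/\varphi(B)$, while the part from $Q\subseteq\lambda B$ is handled by the weak $(1,1)$ boundedness of $M_{\kappa,\mathrm{loc}}$ with respect to $w$ (valid since $w\in A_{1,\mathrm{loc}}$) followed again by $\varphi(\lambda B)\lesssim\varphi(B)$; one also needs the weak-norm version of Remark \ref{rm21}. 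I expect the delicate point to be exactly the treatment of $M_{\kappa,\mathrm{loc}}$: when $B$ is far from the origin the "local" balls competing at points of $B$ are not necessarily small compared with $r_B$, so one cannot localize to a bounded dilate of $B$ all at once; the splitting at the scale $\sim r_B$ is what produces a genuinely local piece — absorbed by the $L^p(w)$ (resp.\ weak $(1,1)$) theory of $M_{\kappa,\mathrm{loc}}$ and the doubling of $\varphi$ — together with a long-range piece absorbed purely by $A(\mathcal M^p(\varphi))$, just as in Theorem \ref{teocero}.
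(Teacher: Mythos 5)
Your proposal is correct and follows essentially the same route as the paper: reduce to $a=0$ by translation, restrict to balls $B$ with $|c_B|>4r_B$, and split $M$ at scale $r_B$ into a genuinely local piece controlled by the $L^p(w)$ (resp.\ weak $(1,1)$) theory of $M_{\kappa,\mathrm{loc}}$ under $A_{p,\mathrm{loc}}$ together with the doubling of $\varphi$, plus a large-ball piece that is essentially constant on $B$ and is absorbed by the $A(\mathcal M^{p}(\varphi))$ condition exactly as in Theorem \ref{teocero}. The only (cosmetic) difference is that the paper splits the function as $f\chi_{2B}+f\chi_{(2B)^c}$ and analyzes $Mf_1$, $Mf_2$ on $B$, whereas you split the competing balls $Q$ according to $Q\subseteq\lambda B$ or not; the two contributions and their estimates coincide.
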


\begin{proof} 
(1) The necessity of $w\in A(\mathcal M^{p}(\varphi))$ is a particular case of Theorem \ref{necbasis}.

(2) We assume that $a=0$. Let $B$ with $|c_B|>4r_B$. Given a nonnegative function $f$ we decompose it as $f=f_1+f_2$, where $f_1=f\chi_{2B}$. Using the subadditivity of $M$ we have
\begin{equation*}
Mf(y)\le Mf_1(y)+Mf_2(y).
\end{equation*}
 
Since $f_1$ is supported in $2B$, to compute $Mf_1(y)$ for $y\in B$ we only need to consider balls $B(y,\rho)$ with $\rho<3r_B$. Moreover, for $r_B<\rho<3r_B$ the average on $B(y,\rho)$ is comparable to the average on $2B$. Therefore,
\begin{equation*}
\aligned
Mf_1(y)&\sim \sup_{\rho \le r_B} \frac 1{|B(y,\rho)|}\int_{B(y,\rho)}|f|+\frac 1{|2B|}\int_{2B}|f|\\
&\sim M_{\textrm{loc}}f_1(y)+\frac 1{|2B|}\int_{2B}|f|.
\endaligned
\end{equation*}

On the other hand, since $f_2$ is supported on $(2B)^c$, to compute $Mf_2(y)$ for $y\in B$ one  only needs to consider balls of radius greater than $r_B$ and as a consequence, $Mf_2$ is almost constant on $B$. Then \begin{equation*}
Mf_2(y)\sim \sup_{B' \supset B}\frac 1{|B'|}\int_{B'}|f|\quad \text{ for all } y\in B.
\end{equation*} 

Altogether, we have for all $y\in B$ and some $B'$ with $B\subset B'$ that 
\begin{equation}\label{equivm}
Mf(y)\sim M_{\textrm{loc}}(f \chi_{2B})(y) + \sup_{B' \supset B} \frac 1{|B'|}\int_{B'}|f|.
\end{equation}

Then we have
\begin{equation*}
\aligned
&\left(\frac 1{\varphi(B)}\int_{B}|Mf|^p w\right)^{1/p} \\
&\lesssim 
\left(\frac 1{\varphi(B)}\int_{B}|M_{\textrm{loc}}(f\chi_{2B})|^p w\right)^{1/p} + \left(\frac 1{\varphi(B)}\int_{B}w\right)^{1/p} \sup_{B' \supset B}\frac 1{|B'|}\int_{B'}|f|.
\endaligned
\end{equation*} 
If $w\in A_{p,\textrm{loc}}(\rn)$, the first term is bounded by a constant times
\begin{equation*}
\left(\frac 1{\varphi(B)}\int_{2B}|f|^p w\right)^{1/p} \le C \|f\|_{\mathcal M^{p}(\varphi,w)}, 
\end{equation*} 
using the doubling property of $\varphi$. The second term is treated as in the proof of Theorem \ref{teocero}. 

If $a\ne 0$, it is enough to make a translation.

(3) From \eqref{equivm} we use the weak-type (1,1) inequality of $M_{\textrm{loc}}$. The last term is treated as for $p>1$. 
\end{proof}

\begin{remark}
It is possible to generalize this statement as follows: write $w=\sum_{j=1}^N w_j$ and assume that for each $j$, $w_j(\cdot +a_j)$ satisfies the second condition, possibly for different values of $a_j$.
\end{remark}

\section{The Hardy-Littlewood maximal operator on weighted local Morrey spaces}\label{zortzi}

In this section we consider local Morrey spaces. We define for $1\le p<\infty$ the space $\mathcal {LM}^{p}(\varphi,w)$ as the class of measurable functions in $\rn$ for which  
\begin{equation}\label{normloc}
\|f\|_{\mathcal {LM}^{p}(\varphi,w)} := \sup_{R>0}\left(\frac 1{\varphi(B(0,R))}\int_{B(0,R)}|f|^p w\right)^{1/p}<\infty.
\end{equation} 
This is the same as \eqref{normdef}, but restricted to balls centered at the origin.  The local Morrey spaces are also named central Morrey spaces by some authors. 

We also consider the corresponding weak-type spaces. We define $W\mathcal {LM}^{p}(\varphi,w)$ by modifying the definition \eqref{normloc} as we did in Subsection \ref{subs23}. The embedding of $\mathcal {LM}^{p}(\varphi,w)$ into $W\mathcal {LM}^{p}(\varphi,w)$ is clear, and the equality \eqref{wcharac} also holds with local Morrey spaces.

Notice that here the term local does not refer to balls separated from the origin as in the case of the local Hardy-Littlewood maximal operator defined in Section \ref{bi}. We also point out that the term local Morrey space has been used to designate a different class of Morrey spaces, namely those obtained by restricting the supremum in \eqref{normdef} to balls of radius less than $1$. See \cite{RT15}, for instance.

Following part (b) of Remark \ref{rm21}, in the norm given by \eqref{normloc} we can replace the balls centered at the origin by balls $B$ with $|c_B|=4r_B$. 
The idea of this reduction comes from \cite{NST19}, where it is applied to dyadic cubes instead of balls.

As we did for the usual Morrey spaces, we consider the K\"othe dual of $\mathcal {LM}^{p}(\varphi,w)$ and define the class $A(\mathcal {LM}^{p}(\varphi))$ as in \eqref{apmdef}, using the local Morrey spaces instead, that is, 
\begin{equation}\label{apmlocdef}
  [w]_{A(\mathcal {LM}^{p}(\varphi))}:= 
	\sup_B \frac{\|\chi _B\|_{\mathcal {LM}^{p}(\varphi,w)}\|\chi _B\|_{\mathcal {LM}^{p}(\varphi,w)'}}{|B|} <\infty.
\end{equation}
Here the supremum is over all the balls in $\rn$. We denote as $A_0(\mathcal {LM}^{p}(\varphi))$ and $A_{\textrm{loc}}(\mathcal {LM}^{p}(\varphi))$ the classes of weights for which \eqref{apmlocdef} is satisfied restricted to balls centered at the origin and to balls $B$ such that $|c_B|>4r_B$, respectively. The corresponding constants will be $[w]_{A_0(\mathcal {LM}^{p}(\varphi))}$ and $[w]_{A_{\textrm{loc}}(\mathcal {LM}^{p}(\varphi))}$.

We prove that being in $A(\mathcal {LM}^{p}(\varphi))$ is necessary and sufficient for the boundedness of the Hardy-Littlewood maximal operator on weighted local Morrey spaces. This result simplifies the characterization obtained in \cite{NST19}, where two conditions were needed. Moreover, we extend the results to the setting of weak-type weighted inequalities, which were not considered in \cite{NST19}. In particular, we can cover the case $p=1$. In the weak case we also get a sharp quantitative result.

\begin{theorem}\label{mainthmloc}
(a) Let $1<p<\infty$. The Hardy-Littlewood maximal operator is bounded on $\mathcal {LM}^{p}(\varphi,w)$ if and only if $w\in A(\mathcal {LM}^{p}(\varphi))$.  

(b) Let $1\le p<\infty$. The Hardy-Littlewood maximal operator is bounded from $\mathcal {LM}^{p}(\varphi,w)$ to $W\mathcal {LM}^{p}(\varphi,w)$ if and only if $w\in A(\mathcal {LM}^{p}(\varphi))$. Moreover, the operator norm is comparable to $[w]_{A(\mathcal {LM}^{p}(\varphi))}$.
\end{theorem}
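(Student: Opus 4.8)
The plan is to treat necessity and sufficiency separately. For the necessity in both (a) and (b), note that $M$ is exactly $M_{\mathcal B}$ for $\mathcal B$ the basis of all balls, and that the proof of Theorem \ref{necbasis} uses only the identity $\|\chi_E\|_{W\mathcal M^{p}(\varphi,w)}=\|\chi_E\|_{\mathcal M^{p}(\varphi,w)}$ and the pointwise bound $\bigl(\tfrac1{|B|}\int_B|f|\bigr)\chi_B\le Mf$. Since the analogue of the first identity holds for local Morrey spaces (as recorded after \eqref{normloc}), the same argument shows that if $M$ is bounded from $\mathcal{LM}^{p}(\varphi,w)$ to $W\mathcal{LM}^{p}(\varphi,w)$ then $w\in A(\mathcal{LM}^{p}(\varphi))$, with operator norm $\ge[w]_{A(\mathcal{LM}^{p}(\varphi))}$. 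As strong boundedness implies weak boundedness, this settles the ``only if'' parts of both statements and one half of the quantitative estimate in (b).

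For the sufficiency I would use two reductions. First, by the remark following \eqref{normloc} the norm \eqref{normloc} only sees balls $B$ with $|c_B|=4r_B$. Second, $Mf\lesssim_\kappa M_0f+M_{\kappa,\text{loc}}f$ pointwise for every $\kappa\in(0,1)$: if a ball $B'\ni x$ is not in $\mathcal B_{\kappa,\text{loc}}$ then $|c_{B'}|\le r_{B'}/\kappa$, so $B'\subset B(0,R)$ with $R=r_{B'}(1+1/\kappa)\ge|x|$, and the average of $|f|$ over $B'$ is $\lesssim_\kappa M_0f(x)$. Fix $\kappa$ small, to be specified below, and estimate the two contributions to $\bigl(\tfrac1{\varphi(B)}\int_B(Mf)^pw\bigr)^{1/p}$ for a ball $B$ with $|c_B|=4r_B$. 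The $M_0$-contribution is handled as in the proof of Theorem \ref{teocero}: that argument uses only that $M_0f$ is radially decreasing, the H\"older-type inequality $\tfrac1{|B'|}\int_{B'}f\le\tfrac1{|B'|}\|f\|_{\mathcal{LM}^{p}(\varphi,w)}\|\chi_{B'}\|_{\mathcal{LM}^{p}(\varphi,w)'}$, the bound $\|\chi_{B'}\|_{\mathcal{LM}^{p}(\varphi,w)}\|\chi_{B'}\|_{\mathcal{LM}^{p}(\varphi,w)'}\le[w]_{A_0(\mathcal{LM}^{p}(\varphi))}|B'|$ for $B'$ centred at the origin, and $\|\chi_B\|_{\mathcal{LM}^{p}(\varphi,w)}\le\|\chi_{B'}\|_{\mathcal{LM}^{p}(\varphi,w)}$; together with the trivial inclusion $A(\mathcal{LM}^{p}(\varphi))\subset A_0(\mathcal{LM}^{p}(\varphi))$ and the doubling of $\varphi$ (to compare $\varphi(B(0,5r_B))$ with $\varphi(B)$), this yields $\bigl(\tfrac1{\varphi(B)}\int_B(M_0f)^pw\bigr)^{1/p}\lesssim[w]_{A(\mathcal{LM}^{p}(\varphi))}\|f\|_{\mathcal{LM}^{p}(\varphi,w)}$, and likewise for the weak norm.

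The $M_{\kappa,\text{loc}}$-contribution is the heart of the matter. For $y\in B$ one has $|y|\sim r_B$, and any ball $B(z,\rho)\ni y$ with $\rho<\kappa|z|$ satisfies $|z|<\rho+5r_B$, hence $\rho<\tfrac{5\kappa}{1-\kappa}r_B$ and $B(z,\rho)\subset B^{\ast}:=B\bigl(c_B,(1+\tfrac{10\kappa}{1-\kappa})r_B\bigr)$; choosing $\kappa$ small enough that $B^{\ast}\subset B(c_B,4r_B)$, the ball $B^{\ast}$ stays in a fixed annulus $\{c\,r_B\le|x|\le C\,r_B\}$ away from the origin, so that $M_{\kappa,\text{loc}}f(y)=M_{\kappa,\text{loc}}(f\chi_{B^{\ast}})(y)\le M(f\chi_{B^{\ast}})(y)$ for $y\in B$. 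The key step is that $w\in A(\mathcal{LM}^{p}(\varphi))$ forces a \emph{uniform Lebesgue $A_p$ condition} on the restrictions of $w$ to such annuli: if $Q$ is a ball contained in $\{c\,r_B\le|x|\le C\,r_B\}$, then by the doubling of $\varphi$ a function $g$ supported in $Q$ has $\|g\|_{\mathcal{LM}^{p}(\varphi,w)}\sim\varphi(B)^{-1/p}\|g\|_{L^p(w)}$, whence $\|\chi_Q\|_{\mathcal{LM}^{p}(\varphi,w)}\sim(w(Q)/\varphi(B))^{1/p}$ and, by the definition of the K\"othe dual in Subsection \ref{kothesub}, $\|\chi_Q\|_{\mathcal{LM}^{p}(\varphi,w)'}\sim\varphi(B)^{1/p}\bigl(\int_Qw^{1-p'}\bigr)^{1/p'}$ (modified as usual for $p=1$); multiplying and dividing by $|Q|$, the Lebesgue $A_p$ quantity of $Q$ is $\lesssim[w]_{A(\mathcal{LM}^{p}(\varphi))}$. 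Hence $w$ restricted to a dilate of $B^{\ast}$ is an $A_p$ weight with constant $\lesssim[w]_{A(\mathcal{LM}^{p}(\varphi))}$, uniformly in $B$, and the classical weighted $L^p$ inequality for $M$ on a fixed ball (resp. the weak $(p,p)$ and weak $(1,1)$ inequalities, which are linear in the $A_p$ constant) gives $\int_B|M_{\kappa,\text{loc}}f|^pw\le\int_{B^{\ast}}|M(f\chi_{B^{\ast}})|^pw\lesssim\int_{CB}|f|^pw\le\varphi(B(0,Cr_B))\|f\|_{\mathcal{LM}^{p}(\varphi,w)}^p\lesssim\varphi(B)\|f\|_{\mathcal{LM}^{p}(\varphi,w)}^p$, using the doubling of $\varphi$ once more. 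Adding the two contributions and taking the supremum over $B$ with $|c_B|=4r_B$ completes the sufficiency and, in the weak case, the quantitative estimate; for (a) one uses instead the (non-quantitative) strong weighted bound for $M$.

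The main obstacle is precisely this last identification: turning the abstract condition $A(\mathcal{LM}^{p}(\varphi))$, phrased through the K\"othe dual of the local Morrey space, into a concrete uniform $A_p$ condition for $w$ on annuli far from the origin. It relies on the observation that a function supported in such an annulus has local Morrey norm comparable to a single normalised weighted $L^p$ average (a consequence of the doubling and reverse-doubling of $\varphi$), and one must keep $\kappa$ small enough that the localization ball $B^{\ast}$ never reaches the origin, since the comparison $\varphi(B(0,R))\sim\varphi(B)$ would otherwise break down. The splitting $M\lesssim M_0+M_{\kappa,\text{loc}}$ and the treatment of the $M_0$-part are, by contrast, routine given Theorem \ref{teocero} and its proof.
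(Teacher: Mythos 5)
Your proposal is correct and follows essentially the same route as the paper: the necessity via the general basis argument of Theorem \ref{necbasis}, the splitting $Mf\sim M_0f+M_{\kappa,\text{loc}}f$, the treatment of $M_0$ exactly as in Theorem \ref{teocero}, and—as the key step—the computation of $\|\chi_Q\|_{\mathcal {LM}^{p}(\varphi,w)}$ and $\|\chi_Q\|_{\mathcal {LM}^{p}(\varphi,w)'}$ for balls away from the origin via \eqref{locloc}, which turns $A(\mathcal {LM}^{p}(\varphi))$ into the Lebesgue condition $A_{p,\text{loc}}$ (the paper's Lemma before Proposition \ref{maxlocmor}). The only cosmetic difference is that you re-derive the localized weighted maximal inequality on annuli by hand, whereas the paper localizes $M_{1/4,\text{loc}}f$ to $5B$ and invokes the Lin--Stempak/Harboure--Salinas--Viviani theorem (with its quantitative weak-type constant) as a black box.
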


In the definition of the usual Hardy-Littlewood maximal operator balls of the form $B(y,r)$ with $0<|y|\le 4r$ can be avoided. This is because we can replace such a ball by $B(0,5r)$, which contains $B(y,r)$ and has comparable size. The remaining balls can be distributed into two sets: those centered at the origin and those of the form  $B(y,r)$ with $|y|>4r$. Denoting as $M_{\text{loc}}$ the local maximal operator $M_{1/4,\text{loc}}$ defined in \eqref{defmaxloc} we have
\begin{equation}\label{maxdec}
Mf(x) \sim M_0f(x) + M_{\text{loc}}f(x).
\end{equation}
To prove Theorem \ref{mainthmloc} we prove separate results for $M_0$ and $M_{\text{loc}}$.

We use the following key observation: for a function $f$ supported in a ball $B$ with $4r_B<|c_B|$ it holds that
\begin{equation}\label{locloc}
\|f\|_{\mathcal {LM}^{p}(\varphi,w)}\sim \left(\frac{1}{\varphi(\widetilde{B})}\int_B |f|^pw\right)^{1/p},
\end{equation}
where $\widetilde{B}$ is as defined in the introduction.  This is because the balls $B(0,R)$ with $R<|c_B|-r_B$ do not intersect $B$ and $\varphi(B(0,R))$ is essentially constant for $|c_B|-r_B<R<|c_B|+r_B$, due to the doubling property of $\varphi$.  In particular, if $f$ is the characteristic function of $B$,
\begin{equation}\label{locchar}
\|\chi_B\|_{\mathcal {LM}^{p}(\varphi,w)}=\|\chi_B\|_{W\mathcal {LM}^{p}(\varphi,w)}\sim \left(\frac{w(B)}{\varphi(\widetilde{B})}\right)^{1/p}.
\end{equation} 

Before characterizing the weights for $M_{\text{loc}}$ we prove the following lemma.

\begin{lemma}
Let $1\le p<\infty$. The class of weights $A_{\textrm{loc}}(\mathcal {LM}^{p}(\varphi))$ coincides with $A_{p,\text{loc}}$. Moreover, the constants $[w]_{A_{\textrm{loc}}(\mathcal {LM}^{p}(\varphi))}$ and  $[w]_{A_{p,\text{loc}}}$ are comparable.
\end{lemma}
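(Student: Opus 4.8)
The plan is to show the two-sided inclusion by comparing the defining suprema directly, ball by ball, using the structural reduction already established for local Morrey norms. Recall that $A_{\textrm{loc}}(\mathcal{LM}^p(\varphi))$ is defined via \eqref{apmlocdef} restricted to balls $B$ with $|c_B|>4r_B$, while $A_{p,\textrm{loc}}$ is the classical local Muckenhoupt class \eqref{defaploc} with supremum over the basis $\mathcal{B}_{1/4,\textrm{loc}}$ (balls with $r_B<\kappa|c_B|$, $\kappa=1/4$); these two index sets of balls essentially agree (up to the harmless boundary case $|c_B|=4r_B$, which by Remark~\ref{rm21}(b) can be absorbed), so it suffices to compare the quantities
\[
\frac{\|\chi_B\|_{\mathcal{LM}^p(\varphi,w)}\,\|\chi_B\|_{\mathcal{LM}^p(\varphi,w)'}}{|B|}
\quad\text{and}\quad
\frac{w(B)^{1/p}\,w^{1-p'}(B)^{1/p'}}{|B|}
\]
for each fixed ball $B$ with $4r_B<|c_B|$.

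The first factor is handled by \eqref{locchar}: for such $B$ one has $\|\chi_B\|_{\mathcal{LM}^p(\varphi,w)}\sim (w(B)/\varphi(\widetilde B))^{1/p}$. The main work is to identify the second factor, the K\"othe-dual norm of $\chi_B$. I would prove that for $B$ with $4r_B<|c_B|$,
\[
\|\chi_B\|_{\mathcal{LM}^p(\varphi,w)'}\sim \varphi(\widetilde B)^{1/p}\,\bigl(w^{1-p'}(B)\bigr)^{1/p'}\quad (1<p<\infty),
\]
with the obvious modification $\|\chi_B\|_{\mathcal{LM}^p(\varphi,w)'}\sim \varphi(\widetilde B)\,\|w^{-1}\chi_B\|_\infty$ when $p=1$. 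For the upper bound: given $g$ supported in $B$ with $\|g\|_{\mathcal{LM}^p(\varphi,w)}\le 1$, apply \eqref{locloc} to get $\int_B |g|^p w\le \varphi(\widetilde B)$, and then estimate $\int_B |g|\chi_B$ by H\"older's inequality with exponents $p$ and $p'$ against the weight $w$, producing the factor $(\int_B |g|^p w)^{1/p}(w^{1-p'}(B))^{1/p'}\le \varphi(\widetilde B)^{1/p}(w^{1-p'}(B))^{1/p'}$; taking the supremum over such $g$ (note that in \eqref{kothe} we may restrict to $g$ supported in $B$ by Remark~\ref{rm22}, since $\chi_B$ is supported in $B$) gives the claimed bound. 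For the lower (matching) bound one tests with the standard choice $g_0=w^{1-p'}\chi_B$ (suitably normalized): compute $\int_B g_0 w = w^{1-p'}(B)$ and $\int_B g_0^p w = w^{1-p'}(B)$, so by \eqref{locloc} $\|g_0\|_{\mathcal{LM}^p(\varphi,w)}\sim (w^{1-p'}(B)/\varphi(\widetilde B))^{1/p}$, and dividing yields $\|\chi_B\|_{\mathcal{LM}^p(\varphi,w)'}\gtrsim \int_B g_0/\|g_0\|\sim \varphi(\widetilde B)^{1/p}(w^{1-p'}(B))^{1/p'}$.

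Combining the two computations, the $\varphi(\widetilde B)$ factors cancel:
\[
\frac{\|\chi_B\|_{\mathcal{LM}^p(\varphi,w)}\,\|\chi_B\|_{\mathcal{LM}^p(\varphi,w)'}}{|B|}
\sim \frac{w(B)^{1/p}\,(w^{1-p'}(B))^{1/p'}}{|B|}
= \frac{w(B)^{1/p}\,w^{1-p'}(B)^{1/p'}}{|B|},
\]
uniformly in $B$ with $4r_B<|c_B|$, and likewise in the $p=1$ case with $w^{1-p'}(B)^{1/p'}$ replaced by $\|w^{-1}\chi_B\|_\infty$. Taking suprema over all such balls gives $[w]_{A_{\textrm{loc}}(\mathcal{LM}^p(\varphi))}\sim [w]_{A_{p,\textrm{loc}}}$, using additionally the fact (quoted from \cite{HSV14}) that the $A_{p,\textrm{loc}}$ constant is independent of the parameter $\kappa$, so that the choice $\kappa=1/4$ is immaterial. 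The main obstacle is the computation of $\|\chi_B\|_{\mathcal{LM}^p(\varphi,w)'}$, i.e. being careful that \eqref{locloc} applies to the test functions $g$ (they are supported in $B$, with $4r_B<|c_B|$, so it does) and that restricting the supremum in the K\"othe dual to functions supported on $B$ is legitimate; once that is set up the rest is the classical duality computation for $A_p$ weights.
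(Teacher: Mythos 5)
Your proposal is correct and follows essentially the same route as the paper: you identify $\|\chi_B\|_{\mathcal{LM}^p(\varphi,w)}$ via \eqref{locchar} and prove $\|\chi_B\|_{\mathcal{LM}^p(\varphi,w)'}\sim\varphi(\widetilde{B})^{1/p}\,w^{1-p'}(B)^{1/p'}$ by H\"older's inequality for the upper bound and the test function $w^{1-p'}\chi_B$ for the lower bound, after which the $\varphi(\widetilde{B})$ factors cancel, exactly as in the paper's argument. The only point where you are terser is $p=1$: the paper makes the ``obvious modification'' explicit by testing against $\chi_E$ with $E=\{x\in B: w(x)\le \inf_B w+\epsilon\}$ and letting $\epsilon\to 0$ to recover the $A_{1,\text{loc}}$ condition, which is precisely the argument your sketch implies, so there is no gap.
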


\begin{proof}
Let $1<p<\infty$. We claim that 
\begin{equation}\label{normprime}
\|\chi_B\|_{\mathcal {LM}^{p}(\varphi,w)'}\sim w^{1-p'}(B)^{1/p'}\varphi(\widetilde{B})^{1/p}.
\end{equation}
Once this is proved, together with \eqref{locchar} we obtain the result of the lemma for $p>1$.

On the one hand we have 
\begin{equation*}
\|\chi_B\|_{\mathcal {LM}^{p}(\varphi,w)'}\ge \frac{1}{\|w^{1-p'}\chi_B\|_{\mathcal {LM}^{p}(\varphi,w)}}\int_B w^{1-p'} \sim
 w^{1-p'}(B)^{1/p'} \varphi(\widetilde{B})^{1/p}.
\end{equation*}

On the other hand, 
\begin{equation*}
\left|\int_B f\right|\le \left(\int_B f^p w\right)^{1/p} w^{1-p'}(B)^{1/p'} 
\le  \varphi(\widetilde{B})^{1/p} \|f\|_{\mathcal {LM}^{p}(\varphi,w)}
w^{1-p'}(B)^{1/p'}.
\end{equation*}
Taking the supremum over the functions $f$ with $\|f\|_{\mathcal {LM}^{p}(\varphi,w)}\le 1$ proves the claim.

Let $p=1$. We have \eqref{locchar}. Moreover, if $w\in A_{1,\text{loc}}$, 
\begin{equation*}
\left|\int_B f\right| \lesssim \int_B |f|w\  \frac{|B|}{w(B)}\le \varphi(\widetilde{B})\frac{|B|}{w(B)}\|f\|_{\mathcal {LM}^{1}(\varphi,w)}.
 \end{equation*}
 From here we obtain
 \begin{equation*}
\|\chi_B\|_{\mathcal {LM}^{1}(\varphi,w)'}\lesssim \varphi(\widetilde{B})\frac{|B|}{w(B)},
 \end{equation*}
 which together with \eqref{locchar} implies that $w\in A(\mathcal {LM}^{1}(\varphi))$.
 
 Let $E$ be a subset of $B$ of positive measure. Then
 \begin{equation*}
\|\chi_B\|_{\mathcal {LM}^{1}(\varphi,w)'}\ge \frac{1}{\|\chi_E\|_{\mathcal {LM}^{1}(\varphi,w)}}\int_B \chi_E \sim \frac{\varphi(\widetilde{B})|E|}{w(E)}. 
\end{equation*}
For $w\in  A(\mathcal {LM}^{1}(\varphi))$, using \eqref{locchar} we obtain 
\begin{equation}\label{fora1}
\frac{w(B)}{|B|}\lesssim \frac{w(E)}{|E|}.
\end{equation}
 With $E=\{x\in B: w(x)\le \inf_B w+\epsilon\}$ we obtain
\begin{equation*}
\frac{w(B)}{|B|}\lesssim \inf_B w+\epsilon.
\end{equation*}
Since this holds for any $\epsilon>0$ we get the $A_{1,\text{loc}}$ condition. 

The comparability of the constants $[w]_{A_{\textrm{loc}}(\mathcal {LM}^{p}(\varphi))}$ and  $[w]_{A_{p,\text{loc}}}$ is a consequence of the proof.
\end{proof}
 
\begin{proposition}\label{maxlocmor}
(a) Let $1<p<\infty$. Then $M_{1/4,\text{loc}}$ is bounded on $\mathcal {LM}^{p}(\varphi,w)$ if and only if $w\in A_{p,\text{loc}}$.

(b) Let $1\le p<\infty$. Then $M_{1/4,\text{loc}}$ is bounded from $\mathcal {LM}^{p}(\varphi,w)$ to $W\mathcal {LM}^{p}(\varphi,w)$ if and only if $w\in A_{p,\text{loc}}$.

In both cases, equivalently, if $A_{\textrm{loc}}(\mathcal {LM}^{p}(\varphi))$.  Moreover, in case (b) the operator norm is equivalent to $[w]_{A_{\textrm{loc}}(\mathcal {LM}^{p}(\varphi))}$.
\end{proposition}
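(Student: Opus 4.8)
The plan is to exploit the fact that $M_{1/4,\text{loc}}$ lives entirely on the basis $\mathcal{B}_{1/4,\text{loc}}$ of balls separated from the origin, together with the key equivalence \eqref{locloc} which converts the local Morrey norm of a function supported in such a ball into a Lebesgue-type average. Since the preceding Lemma already identifies $A_{\textrm{loc}}(\mathcal{LM}^{p}(\varphi))$ with $A_{p,\text{loc}}$ with comparable constants, it suffices to prove the characterization with the classical condition $A_{p,\text{loc}}$, and the final assertion about $A_{\textrm{loc}}(\mathcal{LM}^{p}(\varphi))$ follows automatically. I will treat necessity and sufficiency separately, and I will phrase everything so that the strong case $1<p<\infty$ and the weak case $1\le p<\infty$ (in particular $p=1$) are handled in parallel, using \eqref{wcharac} for local Morrey spaces to equate $\|\chi_E\|_{\mathcal{LM}^p(\varphi,w)}$ and $\|\chi_E\|_{W\mathcal{LM}^p(\varphi,w)}$.

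\emph{Necessity.} Assume $M_{1/4,\text{loc}}$ is bounded from $\mathcal{LM}^{p}(\varphi,w)$ to $W\mathcal{LM}^{p}(\varphi,w)$. Fix a ball $B\in\mathcal{B}_{1/4,\text{loc}}$, so that $4r_B<|c_B|$. For any function $g$ supported in $B$ one has the pointwise bound $\bigl(\frac1{|B|}\int_B|g|\bigr)\chi_B(x)\le M_{1/4,\text{loc}}g(x)$, since $B$ itself is an admissible ball at each of its points. Applying the assumed boundedness and then \eqref{wcharac} together with \eqref{locloc}, we get
\begin{equation*}
\frac1{|B|}\left|\int_B g\right|\cdot\left(\frac{w(B)}{\varphi(\widetilde B)}\right)^{1/p}\lesssim \|M_{1/4,\text{loc}}g\|_{W\mathcal{LM}^p(\varphi,w)}\lesssim \|g\|_{\mathcal{LM}^p(\varphi,w)}\sim\left(\frac1{\varphi(\widetilde B)}\int_B|g|^pw\right)^{1/p}.
\end{equation*}
For $p>1$, taking $g=w^{1-p'}\chi_B$ and simplifying yields exactly the $A_{p,\text{loc}}$ inequality for $B$; for $p=1$, taking $g=\chi_E$ with $E\subset B$ of positive measure gives $\frac{w(B)}{|B|}\lesssim\frac{w(E)}{|E|}$, and letting $E=\{x\in B: w(x)\le\inf_B w+\epsilon\}$ and $\epsilon\to0$ produces $A_{1,\text{loc}}$. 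In both cases the $A_{p,\text{loc}}$ constant is controlled by the operator norm. Since $B$ ranged over $\mathcal{B}_{1/4,\text{loc}}$, which is exactly the basis defining $A_{p,\text{loc}}$, we conclude $w\in A_{p,\text{loc}}$.

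\emph{Sufficiency.} Assume $w\in A_{p,\text{loc}}$. Given $f\in\mathcal{LM}^p(\varphi,w)$, fix $R>0$; by part (b) of Remark \ref{rm21} it is enough to estimate $\frac1{\varphi(\widetilde B)}\int_B|M_{1/4,\text{loc}}f|^pw$ over balls $B$ with $|c_B|=4r_B$, and more generally over $B$ with $4r_B<|c_B|$. The crucial geometric point is that for $x\in B$ the value $M_{1/4,\text{loc}}f(x)$ only sees balls $B'\ni x$ with $r_{B'}<\frac14|c_{B'}|$; such $B'$ has $c_{B'}$ comparable in modulus to $|c_B|$ and $r_{B'}\lesssim |c_B|$, hence $B'\subset CB$ for a fixed dimensional dilate $CB$ with $CB$ still of the form "ball away from the origin" and $\widetilde{CB}\sim\widetilde B$ by the doubling of $\varphi$. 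Therefore $M_{1/4,\text{loc}}f(x)=M_{1/4,\text{loc}}(f\chi_{CB})(x)$ for $x\in B$, and by the classical Harboure--Salinas--Viviani (or Lin--Stempak) weighted bound for $M_{1/4,\text{loc}}$ on $L^p(w)$ with constant comparable to a power of $[w]_{A_{p,\text{loc}}}$,
\begin{equation*}
\frac1{\varphi(\widetilde B)}\int_B|M_{1/4,\text{loc}}f|^pw\le\frac1{\varphi(\widetilde B)}\int_{\rn}|M_{1/4,\text{loc}}(f\chi_{CB})|^pw\lesssim\frac{[w]_{A_{p,\text{loc}}}^{\,p'}}{\varphi(\widetilde B)}\int_{CB}|f|^pw\lesssim\frac{[w]^{\,p'}}{\varphi(\widetilde{CB})}\int_{CB}|f|^pw.
\end{equation*}
The last quantity is $\lesssim[w]_{A_{p,\text{loc}}}^{\,p'}\|f\|_{\mathcal{LM}^p(\varphi,w)}^p$ by definition of the local Morrey norm and the doubling of $\varphi$; the weak case uses the weak $(1,1)$ (resp.\ weak $(p,p)$) bound for $M_{1/4,\text{loc}}$ in the same way. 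Taking $p$-th roots and the supremum over $B$ gives boundedness with operator norm $\lesssim[w]_{A_{p,\text{loc}}}\sim[w]_{A_{\textrm{loc}}(\mathcal{LM}^p(\varphi))}$, which, combined with the lower bound from the necessity argument, gives the claimed equivalence in case (b).

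\emph{Main obstacle.} The routine parts are the weighted $L^p(w)$ bounds for $M_{1/4,\text{loc}}$ (cited from \cite{HSV14,LS10}) and the manipulations with $\varphi$. The one step that needs care is the localization claim $M_{1/4,\text{loc}}f(x)=M_{1/4,\text{loc}}(f\chi_{CB})(x)$ for $x\in B$: one must verify that every ball $B'$ in $\mathcal{B}_{1/4,\text{loc}}$ meeting $B$ is contained in a fixed dilate $CB$ of $B$, and that this dilate still satisfies $\widetilde{CB}\sim\widetilde B$ so that \eqref{locloc} and the doubling of $\varphi$ apply — this is where the separation condition $4r_B<|c_B|$ and the constraint $r_{B'}<\frac14|c_{B'}|$ must be combined quantitatively. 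Once this geometric lemma is in place, the proof reduces cleanly to the Lebesgue case via \eqref{locloc}.
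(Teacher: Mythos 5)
Your proof is correct and follows essentially the same route as the paper: necessity by testing on $w^{1-p'}\chi_B$ (respectively $\chi_E$ for $p=1$), which just inlines Theorem \ref{necbasis} together with the preceding lemma, and sufficiency by localizing $M_{1/4,\text{loc}}f$ on a ball $B$ with $|c_B|=4r_B$ to $f\chi_{CB}$ (the paper takes $C=5$) and invoking the weighted $L^p(w)$ and weak-type bounds of Harboure--Salinas--Viviani. One caveat: your parenthetical ``and more generally over $B$ with $4r_B<|c_B|$'' should be dropped, since for such balls with $r_B\ll|c_B|$ the admissible balls $B'$ meeting $B$ can have radius comparable to $|c_B|$ and are \emph{not} contained in a fixed dilate $CB$; the containment $B'\subset CB$ with a dimensional $C$ genuinely uses $|c_B|\sim r_B$, which is exactly why the reduction of Remark \ref{rm21}(b) to $|c_B|=4r_B$ is needed.
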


\begin{proof}
The necessity is proved as in Theorem \ref{necbasis}.

For the sufficiency in the case $1<p<\infty$ it is enough to prove the case (a). Take a ball $B$ such that $|c_B|=4r_B$. By the definition of the local maximal operator $M_{1/4,\text{loc}}$ we have that when we evaluate $M_{1/4,\text{loc}}f(y)$ only the values of $f$ on balls of the form $B(z,|z|/4)$ containing $y$ are involved. One can check that those balls are inside $5B$. Hence, for $y\in B$ we have
\begin{equation*}
M_{1/4,\text{loc}} f(y)=M_{1/4,\text{loc}} (f\chi_{5B})(y).
\end{equation*}
Then
\begin{equation*}
\aligned
\frac 1{\varphi(B)}\int_{B}|M_{1/4,\text{loc}}f|^p w &= \frac 1{\varphi(B)}\int_{B}|M_{1/4,\text{loc}}(f\chi_{5B})|^p w \\
&\lesssim \frac 1{\varphi(B)}\int_{\widetilde{5B}}|f|^p w\le  \frac {\varphi(\widetilde{5B})}{\varphi(B)} \|f\|_{\mathcal {LM}^{p}(\varphi,w)}^p.
\endaligned
\end{equation*}
The result follows from the doubling property of $\varphi$.

The sufficiency for $p=1$ in (b) is proved in the same way, but we use the weak-type (1,1) inequality for $M_{1/4,\text{loc}}$ and  $A_{1,\text{loc}}$ instead of the strong type.

To prove that the operator norm in the weak case is equivalent to $[w]_{A_{\textrm{loc}}(\mathcal {LM}^{p}(\varphi))}$, we obtain the lower bound from  Theorem \ref{necbasis}. The upper bound is obtained taking into account that  $M_{1/4,\text{loc}}$ is bounded from $L^p(w)$ to $L^{p,\infty}(w)$ with constant comparable to $[w]_{A_{p,\text{loc}}}$. For $p=1$ this is implicit in the proof of \cite[Theorem 1.1]{HSV14}, and for $p>1$ it is enough to adapt such proof.
\end{proof}

For the maximal operator $M_0$ we have the following result.

\begin{proposition}\label{maxloccero}
 Let $1\le p<\infty$. Then $M_{0}$ is bounded on $\mathcal {LM}^{p}(\varphi,w)$ if and only if $w\in A_0(\mathcal {LM}^{p}(\varphi))$. Equivalently, if and only if $M_{0}$ is bounded from $\mathcal {LM}^{p}(\varphi,w)$ to $W\mathcal {LM}^{p}(\varphi,w)$. Moreover, the operator norm in both cases is equivalent to $[w]_{A_{0}(\mathcal {LM}^{p}(\varphi))}$.
\end{proposition}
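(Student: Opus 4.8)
The plan is to mirror, essentially verbatim, the proof of Theorem~\ref{teocero}. The local case is in fact slightly easier, since $M_0$ only involves balls centered at the origin, which are exactly the balls defining the norm of $\mathcal{LM}^p(\varphi,w)$; the sole adaptation is that the monotonicity of the global Morrey norm must be replaced by the reduction (recalled just before Theorem~\ref{mainthmloc}) of the $\mathcal{LM}^p(\varphi,w)$-norm to balls $B$ with $|c_B|=4r_B$, together with \eqref{locchar}.

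\emph{Necessity and the lower bound for the norm.} I would run the argument of Theorem~\ref{necbasis} with the basis $\mathcal B_0$ of balls centered at the origin and with $\mathcal{LM}^p(\varphi,w)$, $W\mathcal{LM}^p(\varphi,w)$ replacing the global spaces. For a ball $B$ centered at the origin, the pointwise inequality $(\frac1{|B|}\int_{\rn}\chi_B|f|)\chi_B\le M_0f$ gives
\[
\Bigl(\frac1{|B|}\int_{\rn}\chi_B|f|\Bigr)\,\|\chi_B\|_{\mathcal{LM}^p(\varphi,w)}\le\|M_0f\|_{W\mathcal{LM}^p(\varphi,w)}\le C\,\|f\|_{\mathcal{LM}^p(\varphi,w)},
\]
and taking the supremum over $f$ with $\|f\|_{\mathcal{LM}^p(\varphi,w)}\le1$, together with the local analogue of \eqref{wcharac}, shows $w\in A_0(\mathcal{LM}^p(\varphi))$ with $[w]_{A_0(\mathcal{LM}^p(\varphi))}$ a lower bound for the operator norm of $M_0$, both into the weak space and on $\mathcal{LM}^p(\varphi,w)$.

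\emph{Sufficiency and the upper bound.} Assume $w\in A_0(\mathcal{LM}^p(\varphi))$; as in the remark following Theorem~\ref{teocero}, this already forces $\int_{B'}|f|<\infty$ for every $f\in\mathcal{LM}^p(\varphi,w)$ and every ball $B'$ centered at the origin, so $M_0f$ is well defined. By the norm reduction it suffices to bound $(\varphi(B)^{-1}\int_B(M_0f)^pw)^{1/p}$ for a ball $B$ with $|c_B|=4r_B$. Just as in Theorem~\ref{teocero}, $M_0f$ is radially decreasing, and since $|c_B|+r_B<2(|c_B|-r_B)$ it is essentially constant on $B$, equal up to constants to $\sup_{\rho>|c_B|+r_B}|B(0,\rho)|^{-1}\int_{B(0,\rho)}|f|$. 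For any ball $B'$ centered at the origin with $B\subset B'$, the Hölder-type inequality for the Köthe dual, the $A_0(\mathcal{LM}^p(\varphi))$ condition, and the lattice monotonicity $\|\chi_B\|_{\mathcal{LM}^p(\varphi,w)}\le\|\chi_{B'}\|_{\mathcal{LM}^p(\varphi,w)}$ yield
\[
\frac1{|B'|}\int_{B'}|f|\;\le\;\|f\|_{\mathcal{LM}^p(\varphi,w)}\,\frac{[w]_{A_0(\mathcal{LM}^p(\varphi))}}{\|\chi_B\|_{\mathcal{LM}^p(\varphi,w)}},
\]
so $\sup_{y\in B}M_0f(y)$ satisfies the same estimate. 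Since $(\varphi(B)^{-1}\int_B(M_0f)^pw)^{1/p}\le(\sup_{y\in B}M_0f(y))(w(B)/\varphi(B))^{1/p}$ and $(w(B)/\varphi(B))^{1/p}\sim\|\chi_B\|_{\mathcal{LM}^p(\varphi,w)}$ — the latter from \eqref{locchar} together with the comparison $\varphi(B)\sim\varphi(\widetilde B)$ (reverse doubling for $B\subset\widetilde B$ and the doubling of $\varphi$, the radii being comparable) — we obtain $(\varphi(B)^{-1}\int_B(M_0f)^pw)^{1/p}\lesssim[w]_{A_0(\mathcal{LM}^p(\varphi))}\|f\|_{\mathcal{LM}^p(\varphi,w)}$. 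Taking the supremum over such $B$ and using the norm reduction again proves that $M_0$ is bounded on $\mathcal{LM}^p(\varphi,w)$ with norm $\lesssim[w]_{A_0(\mathcal{LM}^p(\varphi))}$.

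\emph{Assembling the equivalences.} These close up in a circle: strong boundedness implies weak boundedness by the embedding, weak boundedness implies $w\in A_0(\mathcal{LM}^p(\varphi))$ by the first step, and $w\in A_0(\mathcal{LM}^p(\varphi))$ implies strong boundedness by the second. For the operator norm, the weak norm is dominated by the strong norm, both are bounded below by $[w]_{A_0(\mathcal{LM}^p(\varphi))}$, and the strong norm is bounded above by a constant multiple of it, hence all three are comparable. I do not expect a serious obstacle here: the only point needing care is that the norm monotonicity freely used in Theorem~\ref{teocero} is no longer available and must be replaced throughout by the reduction to balls with $|c_B|=4r_B$ and by \eqref{locchar}; with those substitutions in place the argument is a transcription of the proof of Theorem~\ref{teocero}.
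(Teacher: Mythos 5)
Your proposal is correct and is exactly the argument the paper intends: the paper's proof of this proposition is the single line ``the proof is the same as the one given in Theorem \ref{teocero}'', and you have carried out precisely that transcription, correctly identifying the two points that need adjustment (the reduction of the $\mathcal{LM}^p$-norm to balls with $|c_B|=4r_B$, and replacing the trivial bound $(\varphi(B)^{-1}w(B))^{1/p}\le\|\chi_B\|$ of the global case by \eqref{locchar} together with $\varphi(B)\sim\varphi(\widetilde B)$). No gaps.
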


The proof is the same as the one given in Theorem \ref{teocero}.

Using the last two propositions and \eqref{maxdec}, Theorem \ref{mainthmloc} is proved.

It is immediate to check that Theorem \ref{morcal} holds for $\mathcal {LM}^{p}(\varphi,w)$, and that Remark \ref{rem42} and the comments following it are also valid in the setting of weighted local Morrey spaces.

\section{Morrey boundedness from extrapolation of Lebesgue estimates}\label{sei}

In our previous papers (\cite{DR18, DR19, DR20}) we extended the weighted Lebesgue estimates appearing in the extrapolation theorem to the Morrey setting. In the framework of the current paper the corresponding result is the following. 

\begin{theorem}\label{teo1K}
Let $1\le p_0<\infty$ and let $\mathcal F$ be a collection of nonnegative measurable pairs of functions. Assume that for every $(f,g)\in \mathcal F$ and every $v\in A_{p_0}$ we have
\begin{equation}\label{hypextra}
\|g\|_{L^{p_0}(v)}\lesssim \|f\|_{L^{p_0}(v)},
\end{equation}
where the constant involved in the inequality does not depend on the pair $(f,g)$ and it depends on $v$ only in terms of $[v]_{A_{p_0}}$. 
Then for $1<p<\infty$ and $w\in A_{p, \textrm{loc}}$ satisfying
\begin{equation}\label{extrasuf}
\sup_B \frac 1{|B|} \|\chi _B\|_{\mathcal M^{q}(\varphi,w)}  \|M(\chi_B)^{\frac {1}s}\|_{\mathcal M^{q}(\varphi,w)'} <\infty,
\end{equation}
for some $q<p$ and some $s>1$,
 it holds
 \begin{equation}\label{boundmorrey}
\|g\|_{\mathcal M^{p}(\varphi,w)}\lesssim  \|f\|_{\mathcal M^{p}(\varphi,w)}.
\end{equation}

If \eqref{hypextra} holds for $p_0=1$, then $q=p$ can be taken in \eqref{extrasuf} and  moreover the conclusion \eqref{boundmorrey} is valid also for $p=1$.
\end{theorem}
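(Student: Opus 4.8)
The strategy is to follow the Rubio de Francia extrapolation machinery, but carried out at the level of Morrey spaces, using the characterization of the Calderón operator from Theorem \ref{morcal} (and the remark that it holds for local Morrey spaces, via \eqref{maxdec}) as the replacement for ``$M$ bounded on the dual space''. First I would fix $w\in A_{p,\textrm{loc}}$ satisfying \eqref{extrasuf} for some $q<p$ and $s>1$, and observe (this is the analogue of Remark \ref{rem42} and the discussion after it) that condition \eqref{extrasuf} guarantees that $M$ — or at least the part of it coming from $M_0$ together with $M_{\textrm{loc}}$, using \eqref{maxdec} — is bounded on the Köthe dual $\mathcal M^{q}(\varphi,w)'$: indeed $\|M(\chi_B)^{1/s}\|\gtrsim\|\chi_B\|$ pointwise-dominated, so \eqref{extrasuf} implies $w\in A(\mathcal M^{q}(\varphi))$, and the extra factor $s>1$ gives the room to iterate a Rubio de Francia operator and conclude boundedness of a suitable maximal operator on the dual. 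The key point is that because the hypothesis \eqref{extrasuf} is stated with the \emph{smaller} exponent $q<p$, the scaling relation \eqref{normscale} lets me pass from $\mathcal M^{q}$ to $\mathcal M^{p}$ with an exponent to spare, which is exactly what is needed to run the self-improvement/open-endedness of the $A_p$ condition.

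Next I would set up the Rubio de Francia algorithm in this context. Let $h$ be a nonnegative function in the unit ball of $(\mathcal M^{p/q}(\varphi,w)\,)'$ — or more precisely I would first reduce, via \eqref{normscale}, the desired inequality \eqref{boundmorrey} raised to the power $p$ to an inequality for $g^q, f^q$ in $\mathcal M^{p/q}(\varphi,w)$, and then dualize: $\|g^q\|_{\mathcal M^{p/q}(\varphi,w)}=\sup\int g^{q}h\,$ over $h$ in the unit ball of the Köthe dual. Given such an $h$, define $\mathcal R h=\sum_{k\ge 0}\frac{M^{k}h}{(2\|M\|)^{k}}$ where $M$ is taken bounded on the relevant dual Morrey space (boundedness guaranteed by the previous paragraph, using the $s>1$ slack and Theorem \ref{morcal}/Proposition \ref{maxlocmor}/Proposition \ref{maxloccero} for the local case). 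Then $\mathcal Rh$ dominates $h$, has comparable Köthe-dual norm, and satisfies $M(\mathcal R h)\le 2\|M\|\,\mathcal R h$, so $\mathcal R h\in A_1$ with a quantitative bound — hence $w\,\mathcal R h^{1-p_0}$ (for the weight $v$ fed into \eqref{hypextra}) lies in $A_{p_0}$ with $[v]_{A_{p_0}}$ controlled uniformly. Applying the hypothesis \eqref{hypextra} with this $v$, then Hölder's inequality $\int g^{q}h\le\int g^{q}\mathcal Rh=\|g^{q}\|_{L^{p_0/q}(\dots)}$-type estimate (choosing $p_0$ and the exponents so the arithmetic closes — this is where $p_0$ either equals $1$ or is a free parameter of the hypothesis), I transfer the Lebesgue estimate back and bound $\int g^{q}h\lesssim \int f^{q}\mathcal Rh\lesssim \|f^{q}\|_{\mathcal M^{p/q}(\varphi,w)}\|\mathcal Rh\|_{(\mathcal M^{p/q})'}\lesssim\|f^{q}\|_{\mathcal M^{p/q}(\varphi,w)}$. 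Taking the supremum over $h$ and undoing the scaling gives \eqref{boundmorrey}.

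For the case $p_0=1$ the algorithm simplifies: the relation $v=w\,\mathcal Rh$-style weight is not needed; instead one iterates $M$ directly on $\mathcal M^{p}(\varphi,w)'$ (now with $q=p$ allowed, since no room for Hölder needs to be reserved on the Lebesgue side), uses that $\mathcal Rh\in A_1$ to invoke \eqref{hypextra} with the $A_1$ weight $\mathcal Rh$ itself, and concludes as above; the same computation with $p=1$ is legitimate because \eqref{extrasuf} with $q=p=1$ plus $w\in A_{1,\textrm{loc}}$ still yields boundedness of $M$ on the dual space and the dual-pairing argument is insensitive to $p=1$ (here one uses $\|g\|_{L^1(v)'}=\|gv^{-1}\|_\infty$ as noted in Section \ref{bi}). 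I would also remark that the local Morrey space version is obtained by the identical argument, replacing $\mathcal M^{p}(\varphi,w)$ by $\mathcal {LM}^{p}(\varphi,w)$ throughout and using Propositions \ref{maxlocmor}, \ref{maxloccero} and \eqref{maxdec} to get the maximal operator bounded on the local dual.

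\textbf{Main obstacle.} The delicate point is verifying that $M$ (or the precise combination dictated by \eqref{maxdec}) is bounded on the Köthe dual $\mathcal M^{q}(\varphi,w)'$ purely from \eqref{extrasuf}, and with a \emph{quantitative} norm bound uniform enough that the Rubio de Francia iteration produces $A_1$-constants — and hence $A_{p_0}$-constants in \eqref{hypextra} — that do not blow up. This is exactly the subtlety flagged in Remark \ref{rem42}: $A(\mathcal M^{q}(\varphi))$ alone does not give $M_0$ bounded on the dual, which is why the stronger condition \eqref{extrasuf} with the factor $M(\chi_B)^{1/s}$ and $s>1$ is imposed; the $1/s$ exponent is what converts the raw condition into genuine dual-space boundedness (via the Calderón-operator characterization and an interpolation/self-improvement argument), and keeping track of how the constants in that step propagate through the extrapolation is the core of the work.
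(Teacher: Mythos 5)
Your plan hinges on a step that is not justified and, in the paper's framework, not available: you claim that \eqref{extrasuf} yields boundedness of $M$ on the K\"othe dual $\mathcal M^{q}(\varphi,w)'$, so that a Rubio de Francia iteration $\mathcal Rh=\sum_k M^kh/(2\|M\|)^k$ can be run there. But \eqref{extrasuf} is only a testing condition on characteristic functions of balls; it gives $\|M(\chi_B)^{1/s}\|_{\mathcal M^q(\varphi,w)'}\lesssim |B|/\|\chi_B\|_{\mathcal M^q(\varphi,w)}$, which (as you note) implies $w\in A(\mathcal M^q(\varphi))$ but does not self-improve to $\|Mh\|_{\mathcal M^q(\varphi,w)'}\lesssim\|h\|_{\mathcal M^q(\varphi,w)'}$ for general $h$. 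The Calder\'on-operator characterization (Theorem \ref{morcal} and the Rutskii-type remark after it) converts a testing condition into dual-space boundedness only for $M_0$ over balls centered at the origin, because $M_0f$ is essentially determined by a one-parameter family of averages; no analogous statement is made --- or true in this generality --- for the full Hardy--Littlewood maximal operator over all balls. Your ``Main obstacle'' paragraph correctly identifies this as the crux, but the proposed resolution (``interpolation/self-improvement from the $1/s$ exponent'') is precisely the missing argument, and without it the construction of the $A_1$ weight $\mathcal Rh$, and hence the application of \eqref{hypextra}, collapses.

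The paper's proof avoids dual-space boundedness of $M$ altogether. Fixing a single ball $B$ with $|c_B|>4r_B$ and the dual function $h\in L^{p'}(w,B)$ realizing $\bigl(\int_Bg^pw\bigr)^{1/p}=\int_Bghw$, it majorizes $hw\chi_B\le M(h^sw^s\chi_B)^{1/s}$ and uses the standard fact that $M(\mu)^{1/s}\in A_1$ for $s>1$ (with universal constant) to produce the $A_1$ weight directly --- no iteration of $M$ on any Morrey-type space is needed, and the $A_1$ constant is automatically uniform. The hypothesis $w\in A_{p,\textrm{loc}}$ is then used concretely: it lets one choose $s$ with $w^{1-p'}\in A_{p'/s,\textrm{loc}}$ so that the near part $\int_{2B}fM(h^sw^s\chi_B)^{1/s}$ is controlled by H\"older plus the boundedness of $M_{\textrm{loc}}$ on $L^{p'/s}(w^{1-p'})$; the far part is where $M(h^sw^s\chi_B)^{1/s}\lesssim M(\chi_B)^{1/s}\,w(B)^{1/p}/|B|$ on $\rn\setminus 2B$ and \eqref{extrasuf} enters through the K\"othe--H\"older pairing $\int f M(\chi_B)^{1/s}\le\|f\|_{\mathcal M^p(\varphi,w)}\|M(\chi_B)^{1/s}\|_{\mathcal M^p(\varphi,w)'}$. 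The case $p_0>1$ is reduced to $p_0=1$ by ordinary Lebesgue-space extrapolation applied to the pairs $(f^{p/q},g^{p/q})$, which is where the strict inequality $q<p$ is consumed --- not, as in your plan, to create ``room'' in a dual-space Rubio de Francia scheme. To salvage your approach you would have to either prove that \eqref{extrasuf} together with $A_{p,\textrm{loc}}$ implies $M$ bounded on $\mathcal M^q(\varphi,w)'$ (a statement the authors pointedly do not make) or restructure the argument along the lines above.
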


\begin{remark}
Before proceeding with the proof of the theorem we make the following observations. Assuming that \eqref{extrasuf} holds for fixed $q<p$ and $s>1$, we have: (i) $s$ can be replaced by any  other value in $(1,s)$; (ii) $q$ can be replaced by any other value in $(q,p)$ (changing the value of $s$ if needed).

Statement (i) holds trivially because $M(\chi_B)\le 1$; hence, $M(\chi_B)^{\frac {1}s}$ is increasing with $s$. To prove (ii) take $\tilde q>q$ and denote $r=\tilde{q}/q$. On the one hand, 
\begin{equation*}
\|\chi _B\|_{\mathcal M^{q}(\varphi,w)} = \|\chi _B\|_{\mathcal M^{\tilde{q}}(\varphi,w)}^r, 
\end{equation*}  
using \eqref{normscale}. On the other hand, for some $s_1>1$ to be precised, using H\"older's inequality,
\begin{equation*}
\int M(\chi_B)^{\frac {1}{s_1}} f \le \left(\int M(\chi_B)^{\frac {1}s} f^r\right)^{\frac{1}{r}} \left(\int M(\chi_B)^{\left(\frac {1}{s_1}-\frac 1{sr}\right) r'}\right)^{\frac{1}{r'}}.
\end{equation*}
We choose $s_1>1$ such that the exponent of $M(\chi_B)$ in the last integral is greater than $1$. This is equivalent to requiring that
\begin{equation*}
\frac {1}{s_1}>\frac 1{sr}+\frac 1{r'}.
\end{equation*}
Since the right-hand side is smaller than $1$, it is possible to choose $s_1>1$ satisfying the condition.
Now we can use the boundedness of $M$ to write
\begin{equation*}
\aligned
\int M(\chi_B)^{\frac {1}{s_1}} f &\lesssim \|M(\chi_B)^{\frac {1}s}\|_{\mathcal M^{q}(\varphi,w)'}^{\frac{1}{r}} \|f^r\|_{\mathcal M^{q}(\varphi,w)}^{\frac{1}{r}} |B|^{\frac{1}{r'}}\\
&= \|M(\chi_B)^{\frac {1}s}\|_{\mathcal M^{q}(\varphi,w)'}^{\frac{1}{r}} \|f\|_{\mathcal M^{\tilde q}(\varphi,w)} |B|^{\frac{1}{r'}}.
\endaligned
\end{equation*}
Taking the supremum on the functions $f$ such that $\|f\|_{\mathcal M^{\tilde q}(\varphi,w)}\le 1$ we get
\begin{equation*}
\|M(\chi_B)^{\frac {1}{s_1}}\|_{\mathcal M^{\tilde q}(\varphi,w)'}\lesssim \|M(\chi_B)^{\frac {1}s}\|_{\mathcal M^{q}(\varphi,w)'}^{\frac{1}{r}} |B|^{\frac{1}{r'}}.
\end{equation*}
This gives \eqref{extrasuf} for $\tilde q$, with $s_1$ instead of $s$.  
\end{remark}

\begin{proof}[Proof of Theorem \ref{teo1K}]
 \textit{Case $p_0=1$ and $1<p<\infty$}. The assumption is now
\begin{equation}\label{hypbat}
\|g\|_{L^{1}(v)}\lesssim \|f\|_{L^{1}(v)},\quad\text{for } v\in A_1.
\end{equation}

 Let $p>1$. Since $w\in A_{p, \textrm{loc}}$, we know that $w^{1-p'}\in A_{p', \textrm{loc}}$ and we can choose $s>1$ such that $w^{1-p'}\in A_{p'/s, \textrm{loc}}$. 
 
 Let $B$ be a ball with $|c_B|>4r_B$. There exists nonnegative $h\in L^{p'}(w, B)$ with norm $1$ such that 
\begin{equation*}
\int_B h^{p'}w=1 \quad \text{and} \quad \left(\int_{B} g^p w\right)^{\frac 1p}=\int_{B} g h w.
\end{equation*}
Since $hw\chi_B\le M(h^sw^s\chi_B)^{1/s}$ and $M(h^sw^s\chi_B)^{1/s}$ is an $A_1$ weight for $s>1$, we can write
\begin{equation*}
\int_{B} g h w\le \int_{\rn} g M(h^sw^s\chi_B)^{1/s}\le C \int_{\rn} f M(h^sw^s\chi_B)^{1/s},
\end{equation*}
where in the second inequality we use \eqref{hypbat}. To ensure that $M(h^sw^s\chi_B)^{1/s}$ is in $A_1$ we need to check that it is finite almost everywhere and for this it suffices to show that $h^sw^s\chi_B$ is integrable. We prove this and get a bound for future use. We have
\begin{align} 
\begin{split} \label{p3}
	 \left(\int_{B} h^s w^{s-1} w\right)^\frac{1}{s}
	&\le\left(\int_{B} h^{p'} w\right)^\frac{1}{p'}
	\left(\int_{B} w^{\frac{s(p'-1)}{p'-s}} \right)^{\frac1{s}-\frac{1}{p'}}\\ 
	& \lesssim\  |B|^{\frac{1}{s}}\left(\int_{B} w^{1- p'}\right)^{-\frac{1}{p'}}
		\lesssim\left(\int_{B} w\right)^{\frac{1}{p}}|B|^{-\frac{1}{s'}},
\end{split} 
\end{align}
where the second inequality holds because $w^{1-p'}\in A_{p'/s, \textrm{loc}}$ (the exponent of $w$ in the integral can be written as $(1-p')/(1-(p'/s)$) and in the last one we use 
\begin{equation*}
|B|\le \left(\int_{B} w\right)^{\frac{1}{p}} \left(\int_{B} w^{1- p'}\right)^{\frac{1}{p'}}.
\end{equation*}  

We decompose the integral of $f M(h^sw^s\chi_B)^{1/s}$ over $\rn$  in the form
\begin{equation}\label{bitan}
 \int_{2B} f M(h^sw^s\chi_B)^{1/s} + \int_{\rn \setminus 2B} f M(h^sw^s\chi_B)^{1/s}.
\end{equation}

To deal with the first term in \eqref{bitan} we use H\"older's inequality,
\begin{equation*}
\int_{2B} f M(h^sw^s\chi_B)^{1/s}\le \left(\int_{2B} f^p w\right)^{\frac 1p} \left(\int_{2B} M(h^sw^s\chi_B)^{p'/s} w^{1-p'}\right)^{\frac 1{p'}}.
\end{equation*}
The function $h^sw^s\chi_B$ is supported on $B$ and for $y\in 2B$ the value of $M(h^sw^s\chi_B)(y)$ only needs to take into account balls of radii less than $3r_B$. Therefore, 
\begin{equation}\label{toloc}
M(h^sw^s\chi_B)(y)\sim M_{\textrm{loc}}(h^sw^s\chi_B)(y)+\frac 1{|B|}\int_B h^sw^s.
\end{equation}
Since $s$ has been chosen such that $w^{1-p'}\in A_{p'/s, \textrm{loc}}$, we use the weighted boundedness of $M_{\textrm{loc}}$ for the first summand and \eqref{p3} for the second to see that the last term is less than a constant. 
On the other hand,
\begin{equation}\label{bib}
 \left(\int_{2B} f^p w\right)^{\frac 1p} \lesssim   \varphi(2B)^{\frac 1p}  
\|f\|_{\mathcal M^{p}(\varphi,w)}.
\end{equation}
From the doubling property of $\varphi$ we obtain the desired bound. 

To deal with the integral on $\rn \setminus 2B$ we first realize that for a function supported on $B$ the maximal function at a point $y$ outside $2B$ only needs to take into account averages on balls containing $y$ and intersecting $B$. Therefore, we  are only concerned with balls whose radii are comparable to $|y-c_B|$. Thus for $y\in \rn \setminus 2B$ we have
\begin{equation*}
\aligned
& M(h^sw^s\chi_B)(y)^{\frac 1s} \sim \left(\frac 1{|y-c_B|^n} \int_B h^sw^s\right)^{\frac 1s}
\sim \left(\frac {M(\chi_B)(y)}{|B|} \int_B h^sw^s\right)^{\frac 1s}
\\ & \qquad\qquad \lesssim M(\chi_B)(y)^{\frac {1}s} \frac {w(B)^{\frac{1}{p}}}{|B|}
\lesssim  \frac {M(\chi_B)(y)^{\frac {1}s}}{|B|}\,\varphi(B)^{\frac{1}{p}}\|\chi _B\|_{\mathcal M^{p}(\varphi,w)},
\endaligned
\end{equation*}
using \eqref{p3} in the third step. Then
\begin{equation*}
\aligned
&\int_{\rn \setminus 2B} f   M(h^sw^s\chi_B)^{1/s}\lesssim \left(\int_{\rn \setminus 2B} f M(\chi_B)^{\frac {1}s}\right) \frac {\varphi(B)^{\frac{1}{p}}}{|B|}\,\|\chi _B\|_{\mathcal M^{p}(\varphi,w)}\\
&\qquad \lesssim \|f\|_{\mathcal M^{p}(\varphi,w)} \|M(\chi_B)^{\frac {1}s}\|_{\mathcal M^{p}(\varphi,w)'}\frac {\varphi(B)^{\frac{1}{p}}}{|B|}\,\|\chi _B\|_{\mathcal M^{p}(\varphi,w)}.
\endaligned
\end{equation*}
The desired estimate follows from \eqref{extrasuf} with $q=p$.

 \textit{Case $p_0=1$ and $p=1$}. Let $B$ be a ball with $|c_B|>4r_B$ and $w\in A_{1, \textrm{loc}}$ satisfying \eqref{extrasuf} with $q=1$. Choose $s>1$ such that $w^s\in A_{1, \textrm{loc}}$. (This is the same as satisfying a reverse H\"older condition with exponent $s$ over the balls used in the definition of $M_{\textrm{loc}}$.) Then
\begin{equation*}
\int_{B} g w\le \int_{\rn} g M(w^s\chi_B)^{1/s}\le C \int_{\rn} f M(w^s\chi_B)^{1/s},
\end{equation*} 
using \eqref{hypbat} in the second inequality. We are like in \eqref{bitan} with $h\equiv 1$. Over $2B$ we have \eqref{toloc} and since $w^s\in A_{1, \textrm{loc}}$ we deduce that $M_{\text{loc}}(w^s\chi_B)^{1/s}(y)\lesssim w(y)$ a.e. and we are like in \eqref{bib} with $p=1$. 

For $y\in \rn \setminus 2B$ we have
\begin{equation*}
\aligned
M(w^s\chi_B)(y)^{\frac 1s} &\sim \left(\frac 1{|y-c_B|^n} \int_B w^s\right)^{\frac 1s}\lesssim \frac {M(\chi_B)(y)^{\frac {1}s}}{|B|}\int_{B} w\\
&\lesssim  \frac {M(\chi_B)(y)^{\frac {1}s}}{|B|}\,\varphi(B)\|\chi _B\|_{\mathcal M^{1}(\varphi,w)},
\endaligned
\end{equation*}
where the second inequality is a consequence of $w^s\in A_{1, \textrm{loc}}$. We are in the same situation as in the previous part of the proof. 

 \textit{Case $p_0>1$ and $p>1$}. Let $w\in A_{p, \textrm{loc}}$. Take $q\in (1,p)$ for which \eqref{extrasuf} holds and $w\in A_{q, \textrm{loc}}$. By the extrapolation theory for weighted Lebesgue spaces we can take any $p_0\in (1,\infty)$ in \eqref{hypextra}. In particular,
\begin{equation}\label{hyp1}
\|g^{p/q}\|_{L^{1}(v)}\lesssim \|f^{p/q}\|_{L^{1}(v)},\quad\text{for } v\in A_1.
\end{equation}  
Using the first part of the proof we deduce
\begin{equation*}
\|g^{p/q}\|_{\mathcal M^{q}(\varphi,w)}\lesssim \|f^{p/q}\|_{\mathcal M^{q}(\varphi,w)},
\end{equation*}  
which is equivalent to \eqref{boundmorrey}.
\end{proof}

A comment similar to Remark \ref{rem42} can be made here: there are weights satisfying \eqref{apmdef} which do not satisfy \eqref{extrasuf} for $q\le p$. Indeed, again in the case of the Samko type space the Hardy-Littlewood maximal operator is bounded for $w(x)=|x|^{\lambda-n}$, but some singular  integrals are not (for instance, the Hilbert transform in the case $n=1$, see \cite{Sam09, DR19}). This means that \eqref{apmdef} is not sufficient and legitimates the presence of the stronger condition \eqref{extrasuf} in Theorem \ref{teo1K}. 

The main result of \cite{Ru18} can be applied here: the boundedness of the Hardy-Littlewood maximal operator on both $X$ and $X'$ is equivalent to the boundedness of the Riesz transforms (the Hilbert transform if we are in $\R$) on  $X$,  in the sense that the norm inequality for the latter holds for functions in $X\cap L^2$. This is proved for Banach lattices $X$ with a Fatou property different from the one mentioned in Subsection \ref{kothesub}, namely, if $f_n\to f$ a.e. and $\|f_n\|_X\le 1$, then $f\in X$ and $\|f\|_X\le 1$. If $X=\mathcal M^{p}(\varphi,w)$ this property is fulfilled (use Fatou's lemma) and the result can be applied.  Hence, if $M$ is bounded on $\mathcal M^{p}(\varphi,w)$ but the Riesz transforms are not, then $M$ is not bounded on $\mathcal M^{p}(\varphi,w)'$.

We can obtain an extrapolation-type result also in the framework of weighted local Morrey spaces. This is the counterpart of Theorem \ref{teo1K}.

\begin{theorem}\label{teo1Kloc}
Let $1\le p_0<\infty$ and let $\mathcal F$ be a collection of nonnegative measurable pairs of functions. Assume that for every $(f,g)\in \mathcal F$ and every $v\in A_{p_0}$ we have
\begin{equation}\label{hypextra2}
\|g\|_{L^{p_0}(v)}\lesssim \|f\|_{L^{p_0}(v)},
\end{equation}
where the constant involved in the inequality does not depend on the pair $(f,g)$ and it depends on $v$ only in terms of $[v]_{A_{p_0}}$. 
Then for $1<p<\infty$ and $w\in A_{p, \textrm{loc}}$ satisfying
\begin{equation}\label{extrasuf2}
\sup_B \frac 1{|B|} \|\chi _B\|_{\mathcal {LM}^{q}(\varphi,w)}  \|M(\chi_B)^{\frac {1}s}\|_{\mathcal {LM}^{q}(\varphi,w)'} <\infty,
\end{equation}
for some $q<p$ and some $s>1$, and all balls $B$ centered at the origin,
 it holds
 \begin{equation}\label{boundmorrey2}
\|g\|_{\mathcal {LM}^{p}(\varphi,w)}\lesssim  \|f\|_{\mathcal {LM}^{p}(\varphi,w)}.
\end{equation}
\end{theorem}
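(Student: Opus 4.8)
The plan is to mimic the proof of Theorem \ref{teo1K} almost verbatim, with the global Morrey norm replaced by the local one and the key support reduction \eqref{locloc}--\eqref{locchar} doing the work that Remark \ref{rm22} did in the global case. First I would reduce, by the usual scaling argument \eqref{normscale} as in the proof of Theorem \ref{teo1K} (Case $p_0>1$ and $p>1$), to the case $p_0=1$; so assume \eqref{hypextra2} holds for $p_0=1$, i.e.\ $\|g\|_{L^1(v)}\lesssim\|f\|_{L^1(v)}$ for all $v\in A_1$. Since $w\in A_{p,\mathrm{loc}}$ we may pick $s>1$ with $w^{1-p'}\in A_{p'/s,\mathrm{loc}}$, exactly as before.

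Next, by Remark \ref{rm21}(b) it suffices to estimate $\bigl(\varphi(B)^{-1}\int_B g^p w\bigr)^{1/p}$ over balls $B$ with $|c_B|=4r_B$ (equivalently $|c_B|>4r_B$). Fix such a $B$ and choose a nonnegative $h$ with $\int_B h^{p'}w=1$ and $(\int_B g^p w)^{1/p}=\int_B ghw$. Because $hw\chi_B\le M(h^sw^s\chi_B)^{1/s}$ and the right side is an $A_1$ weight, \eqref{hypextra2} with $p_0=1$ gives $\int_B ghw\le C\int_{\rn} f\,M(h^sw^s\chi_B)^{1/s}$. The integrability of $h^sw^s\chi_B$ and the bound $\bigl(\int_B h^s w^{s-1}w\bigr)^{1/s}\lesssim(\int_B w)^{1/p}|B|^{-1/s'}$ are obtained from $w^{1-p'}\in A_{p'/s,\mathrm{loc}}$ exactly as in \eqref{p3}.

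Now I split $\int_{\rn} f\,M(h^sw^s\chi_B)^{1/s}$ into $\int_{2B}$ and $\int_{\rn\setminus 2B}$. Over $2B$: as in \eqref{toloc} the maximal function is comparable to $M_{\mathrm{loc}}(h^sw^s\chi_B)+|B|^{-1}\int_B h^sw^s$; Hölder, the $L^{p'/s}(w^{1-p'})$-boundedness of $M_{\mathrm{loc}}$ (valid since $w^{1-p'}\in A_{p'/s,\mathrm{loc}}$) and \eqref{p3} reduce this to a constant times $(\int_{2B}f^p w)^{1/p}$; since $f$ restricted to $2B$ is supported in a fixed ball, \eqref{locloc} (the local analogue of Remark \ref{rm22}) together with the doubling of $\varphi$ bounds this by $\varphi(\widetilde{B})^{1/p}\|f\|_{\mathcal{LM}^p(\varphi,w)}\sim\varphi(B)^{1/p}\|f\|_{\mathcal{LM}^p(\varphi,w)}$, using $\varphi(\widetilde B)\sim\varphi(B)$ when $|c_B|=4r_B$. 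Over $\rn\setminus 2B$: as in the global proof, $M(h^sw^s\chi_B)(y)^{1/s}\sim\bigl(M(\chi_B)(y)|B|^{-1}\int_B h^sw^s\bigr)^{1/s}\lesssim M(\chi_B)(y)^{1/s}\,w(B)^{1/p}/|B|$, and since $\|\chi_B\|_{\mathcal{LM}^p(\varphi,w)}\sim(w(B)/\varphi(\widetilde B))^{1/p}$ by \eqref{locchar} (with $\varphi(\widetilde B)\sim\varphi(B)$), this is $\lesssim |B|^{-1}M(\chi_B)(y)^{1/s}\varphi(B)^{1/p}\|\chi_B\|_{\mathcal{LM}^p(\varphi,w)}$; integrating against $f$ and applying the Hölder-type inequality for the Köthe dual gives the bound $|B|^{-1}\|f\|_{\mathcal{LM}^p(\varphi,w)}\|M(\chi_B)^{1/s}\|_{\mathcal{LM}^p(\varphi,w)'}\varphi(B)^{1/p}\|\chi_B\|_{\mathcal{LM}^p(\varphi,w)}$, which is $\lesssim\varphi(B)^{1/p}\|f\|_{\mathcal{LM}^p(\varphi,w)}$ precisely by \eqref{extrasuf2} (with $q=p$; the reduction from general $q<p$ to $q=p$ is the scaling step already performed). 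Dividing by $\varphi(B)^{1/p}$ and taking the supremum over admissible $B$ yields \eqref{boundmorrey2}.

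The main obstacle, and the only genuinely new point compared with Theorem \ref{teo1K}, is making sure that the localization identities \eqref{locloc} and \eqref{locchar} correctly replace the global reduction of Remark \ref{rm22}: one must check that for a ball $B$ with $|c_B|=4r_B$ both $2B$ and the truncated pieces of $\rn\setminus 2B$ interact with only those origin-centered balls $B(0,R)$ with $R\sim|c_B|$, so that $\varphi(\widetilde B)$ is the right normalizing quantity and is comparable to $\varphi(B)$; everything else is a transcription of the global argument. It is also worth noting that, unlike Theorem \ref{teo1K}, no separate $p=1$ statement is asserted here, so I do not need the $w^s\in A_{1,\mathrm{loc}}$ variant.
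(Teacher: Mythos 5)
Your outline is the same as the paper's: reduce to $p_0=1$ by the rescaling $g\mapsto g^{p/q}$, test against $h$ with $\int_B h^{p'}w=1$, majorize $hw\chi_B$ by the $A_1$ weight $M(h^sw^s\chi_B)^{1/s}$, and split the resulting integral over $2B$ and $\rn\setminus 2B$. The local-space modifications you identify (bounding $\int_{2B}f^pw$ by $\int_{\widetilde{2B}}f^pw\le\varphi(\widetilde{2B})\,\|f\|^p_{\mathcal {LM}^{p}(\varphi,w)}$, and using \eqref{locchar} in place of the global norm of $\chi_B$) are exactly the ones the paper makes.

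There is, however, one step that does not go through as written. The hypothesis \eqref{extrasuf2} is assumed \emph{only for balls centered at the origin}, whereas at the end of your estimate on $\rn\setminus 2B$ you invoke it for the ball $B$ with $|c_B|=4r_B$, which is not origin-centered; so the inequality $|B|^{-1}\|\chi_B\|_{\mathcal {LM}^{p}(\varphi,w)}\|M(\chi_B)^{1/s}\|_{\mathcal {LM}^{p}(\varphi,w)'}\lesssim 1$ is not ``precisely \eqref{extrasuf2}''. The repair is short and is what the paper does: since $\chi_B\le\chi_{\widetilde{B}}$ pointwise, $M(\chi_B)\le M(\chi_{\widetilde{B}})$ and hence $\|M(\chi_B)^{1/s}\|_{\mathcal {LM}^{p}(\varphi,w)'}\le\|M(\chi_{\widetilde{B}})^{1/s}\|_{\mathcal {LM}^{p}(\varphi,w)'}$; likewise $\|\chi_B\|_{\mathcal {LM}^{p}(\varphi,w)}\le\|\chi_{\widetilde{B}}\|_{\mathcal {LM}^{p}(\varphi,w)}$ and $|B|\sim|\widetilde{B}|$ because $|c_B|=4r_B$. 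Concretely, the paper replaces the pointwise bound by $M(h^sw^s\chi_B)(y)^{1/s}\lesssim M(\chi_{\widetilde{B}})(y)^{1/s}\,w(\widetilde{B})^{1/p}/|\widetilde{B}|$ and then runs your argument with $\widetilde{B}$, which is origin-centered, so \eqref{extrasuf2} applies. Once this substitution is made your proof coincides with the paper's. (A cosmetic point: for the $2B$ term you do not need \eqref{locloc} --- whose hypothesis $4r_{2B}<|c_{2B}|$ the ball $2B$ fails anyway --- only the trivial bound of the integral over $2B$ by the integral over $\widetilde{2B}$.)
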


\begin{proof}
The proof is almost the same as the one given for Theorem \ref{teo1K}. We only mention the small changes. The ball $B$ is such that $|c_B|=4r_B$, and $\widetilde B$ is as before. 

The proof runs as before until \eqref{bib}. Then  
\begin{equation*}
 \left(\int_{2B} f^p w\right)^{\frac 1p}\le \left(\int_{\widetilde {2B}} f^p w\right)^{\frac 1p} \lesssim   \varphi(\widetilde {2B})^{\frac 1p}  
\|f\|_{\mathcal {LM}^{p}(\varphi,w)}.
\end{equation*}

When we are in $\rn \setminus 2B$ we have 
\begin{equation*}
 M(h^sw^s\chi_B)(y)^{\frac 1s} \lesssim M(\chi_B)(y)^{\frac {1}s} \frac {w(B)^{\frac{1}{p}}}{|B|}
\lesssim  M(\chi_{\widetilde B})(y)^{\frac {1}s}  \frac {w(\widetilde B)^{\frac{1}{p}}}{|\widetilde B|},
\end{equation*}
where the first inequality is in the proof of Theorem \ref{teo1K} and the second one is immediate because $B\subset \widetilde B$ and both have comparable size. From here the proof runs as before.
\end{proof}

We can apply again the result in \cite{Ru18}: the Riesz transforms are bounded on $\mathcal {LM}^{q}(\varphi,w)\cap L^2$ if and only the Hardy-Littlewood maximal operator is bounded on $\mathcal {LM}^{q}(\varphi,w)$ and $\mathcal {LM}^{q}(\varphi,w)'$. In particular, $w\in  A(\mathcal {LM}^{p}(\varphi))$ is necessary for the boundedness of the Riesz transforms (compare with \cite[Theorem 1.6]{NST19}). 

There are many applications of the theorems in this section to operators $T$ satisfying \eqref{hypextra} for pairs $(|f|, |Tf|)$. We refer to the examples presented in \cite{DR18}, for example. From the proof of the theorems it is also deduced the embedding of the  Morrey space into the union of weighted Lebesgue spaces. This allows us to define the operator on the Morrey space by restriction. See Section 4 of \cite{DR20}, for instance.   

The applications to boundedness on weighted local Morrey spaces are new.

\section{Relation with $A_p$ weights and with previous results}\label{zazpi}

In this section we compare the results of this paper with our results in previous papers in the case 
\begin{equation}\label{embed}
\varphi (B)= r_B^{\lambda_1} w(B)^{\lambda_2/n}.
\end{equation}
We assume that $0<\lambda_1+\lambda_2<n$ (see Proposition 2.4 in \cite{DR20}). We also assume that $0\le \lambda_2\le n$.  

\subsection{The case of weighted global Morrey spaces}

First we obtain a necessary condition, a quantitative reverse doubling property, which apparently has been overlooked in the literature concerning the Samko-type weighted Morrey spaces. Another necessary condition saying that the weights are in a certain Muckenhoupt class $A_q$ with $q$ depending on $p,\lambda_1$ and $\lambda_2$ was also obtained in \cite{DR20}. We will use the following embedding result from \cite{DR20}.

\begin{lemma}\cite[Lemma 2.5]{DR20}\label{emblema}
Let $1\le p<\infty$, $0\le \lambda_1, \lambda_2 <n$ and $\lambda_1 + \lambda_2 < n$.  The embedding 
\[
L^{\frac {pn}{n-\lambda_1-\lambda_2}}(w^{\frac{n-\lambda_2}{n-\lambda_1-\lambda_2}})\hookrightarrow \mathcal M^{p}(\varphi,w)
\] 
holds for $\varphi$ as in \eqref{embed} and a constant depending only on $n$, $\lambda$ and $p$, not on $w$.
\end{lemma}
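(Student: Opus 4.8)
The plan is to reduce the Morrey-space estimate to a weighted Lebesgue estimate via Hölder's inequality in a carefully weighted form, balancing the exponents so that the power of $w$ that appears matches a single power of $w$ in the target $L^{\frac{pn}{n-\lambda_1-\lambda_2}}$ space. Fix $f$ in the stated Lebesgue space and a ball $B$; our goal is to bound $\left(\frac{1}{\varphi(B)}\int_B |f|^p w\right)^{1/p}$ by $\|f\|_{L^{\frac{pn}{n-\lambda_1-\lambda_2}}(w^{(n-\lambda_2)/(n-\lambda_1-\lambda_2)})}$ with a constant independent of $B$ and $w$. Write $r=\frac{n}{n-\lambda_1-\lambda_2}$, so the Lebesgue exponent is $pr$. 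Apply Hölder's inequality on $B$ with exponents $r$ and $r'$ to the product $|f|^p w = \left(|f|^p w^{\theta}\right)\cdot w^{1-\theta}$ for a suitable $\theta$: the first factor should come out to $|f|^p$ times the correct power of $w$ for the target space, and the second factor should be integrable with $\int_B w^{(1-\theta)r'} \sim w(B)^{(1-\theta)r'}$ up to renormalization — which forces $(1-\theta)r' = 1$, i.e. $1-\theta = 1/r' = 1 - 1/r = \frac{\lambda_1+\lambda_2}{n}$, hence $\theta = \frac{n-\lambda_1-\lambda_2}{n} = 1/r$.

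First I would carry out this computation explicitly. With $\theta = 1/r$, Hölder gives
\begin{equation*}
\int_B |f|^p w \le \left(\int_B |f|^{pr} w^{\theta r}\right)^{1/r}\left(\int_B w^{(1-\theta)r'}\right)^{1/r'} = \left(\int_B |f|^{pr} w^{(n-\lambda_2)/(n-\lambda_1-\lambda_2)}\right)^{1/r} w(B)^{1/r'},
\end{equation*}
since $\theta r = 1$ is wrong — let me recheck: we need $\theta r = \frac{n-\lambda_2}{n-\lambda_1-\lambda_2}$, not $1$. So instead take the first factor to be $|f|^{pr} w^{\theta r}$ with $\theta r = \frac{n-\lambda_2}{n-\lambda_1-\lambda_2}$, i.e. $\theta = \frac{n-\lambda_2}{n}$, and the second factor $w^{(1-\theta)r'}$ with $(1-\theta)r' = \frac{\lambda_2}{n}\cdot\frac{n}{\lambda_1+\lambda_2} = \frac{\lambda_2}{\lambda_1+\lambda_2}$. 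That does not collapse to $w(B)$ cleanly, so the honest move is a three-term split or a direct two-term Hölder producing exactly $w(B)^{\lambda_2/n}$ and $|B|^{\lambda_1/n}$ after bounding. Concretely: split $|f|^p w = |f|^p w^{\frac{n-\lambda_2}{n-\lambda_1-\lambda_2}\cdot\frac{1}{r}} \cdot w^{1 - \frac{n-\lambda_2}{n-\lambda_1-\lambda_2}\cdot\frac{1}{r}}$ and apply Hölder with exponents $(r,r')$; the second exponent on $w$ is $1 - \frac{n-\lambda_2}{n} = \frac{\lambda_2}{n}$, and $\left(\int_B w^{\lambda_2/(nr')}\right)^{1/r'}$... the cleanest route is simply Hölder with the pair of exponents $\left(r, r'\right)$ applied so that the second integral is $\int_B w^{\lambda_2 r'/(nr')} = \int_B w^{\lambda_2/n}$ — but this requires $\lambda_2/n \le 1$ (true, since $\lambda_2 \le n$) and then Jensen/Hölder again gives $\int_B w^{\lambda_2/n} \le w(B)^{\lambda_2/n}|B|^{1-\lambda_2/n}$. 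Combining, $\int_B |f|^p w \lesssim \|f\|^p_{L^{pr}(w^{(n-\lambda_2)/(n-\lambda_1-\lambda_2)})} \cdot w(B)^{\lambda_2/n} |B|^{\lambda_1/(n)\cdot\text{something}}$, and one checks the power of $|B|$ is exactly $r_B^{\lambda_1}$ up to dimensional constants, so dividing by $\varphi(B) = r_B^{\lambda_1} w(B)^{\lambda_2/n}$ and taking $p$-th roots and then the supremum over $B$ yields the claim.

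The step I expect to be the main obstacle is bookkeeping the exponents so that the two leftover factors after Hölder are exactly $w(B)^{\lambda_2/n}$ and $r_B^{\lambda_1}$ (up to a dimensional constant), since a naive single application of Hölder produces a power of $w$ that is not a clean power of $w(B)$; the fix is to apply Hölder twice (or Jensen's inequality for the concave function $t\mapsto t^{\lambda_2/n}$ on the second factor, using $0\le \lambda_2/n\le 1$), and to verify that $\int_B w^{\lambda_2/n}\,dx \le |B|^{1-\lambda_2/n} w(B)^{\lambda_2/n}$. The only place the hypotheses $0\le \lambda_1,\lambda_2 < n$ and $\lambda_1+\lambda_2<n$ enter is to guarantee all exponents are positive and $< \infty$ (so $r>1$, $pr<\infty$) and that the concavity step is available; I would flag these explicitly. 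Finally, since every estimate used only Hölder/Jensen with constants depending on $n$, $\lambda_1$, $\lambda_2$ and $p$ and never on $w$, the embedding constant is uniform in $w$, as asserted; and taking the supremum over all balls $B$ converts the pointwise-in-$B$ bound into the norm inequality $\|f\|_{\mathcal M^p(\varphi,w)} \lesssim \|f\|_{L^{pn/(n-\lambda_1-\lambda_2)}(w^{(n-\lambda_2)/(n-\lambda_1-\lambda_2)})}$.
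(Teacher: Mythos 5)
The paper does not prove this lemma; it imports it verbatim from \cite[Lemma 2.5]{DR20}, so there is no in-paper argument to compare against. Your approach (H\"older with exponents $r=\tfrac{n}{n-\lambda_1-\lambda_2}$ and $r'=\tfrac{n}{\lambda_1+\lambda_2}$, splitting $w$ so that the first factor carries exactly the weight $w^{(n-\lambda_2)/(n-\lambda_1-\lambda_2)}$, then Jensen on the leftover fractional power of $w$) is the standard and correct one, and your final identification of the leftover factors as $w(B)^{\lambda_2/n}\,r_B^{\lambda_1}$ is right. The one concrete error in your bookkeeping is the exponent in the second H\"older factor: writing $|f|^pw=\bigl(|f|^pw^{(n-\lambda_2)/n}\bigr)\cdot w^{\lambda_2/n}$, H\"older gives
\begin{equation*}
\int_B |f|^p w \le \left(\int_B |f|^{pr} w^{\frac{n-\lambda_2}{n-\lambda_1-\lambda_2}}\right)^{1/r}\left(\int_B w^{\frac{\lambda_2}{n}\,r'}\right)^{1/r'},
\end{equation*}
and $\tfrac{\lambda_2}{n}r'=\tfrac{\lambda_2}{\lambda_1+\lambda_2}$, not $\tfrac{\lambda_2}{n}$ as you wrote (the $r'$ does not cancel: the integrand is raised to $r'$ inside and the integral to $1/r'$ outside). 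The Jensen step must therefore be applied with the concave power $t\mapsto t^{\lambda_2/(\lambda_1+\lambda_2)}$, which is legitimate precisely because $\lambda_1\ge 0$; it yields $\int_B w^{\lambda_2/(\lambda_1+\lambda_2)}\le |B|^{\lambda_1/(\lambda_1+\lambda_2)}w(B)^{\lambda_2/(\lambda_1+\lambda_2)}$, and raising to the power $1/r'=(\lambda_1+\lambda_2)/n$ produces exactly $|B|^{\lambda_1/n}w(B)^{\lambda_2/n}\sim r_B^{\lambda_1}w(B)^{\lambda_2/n}=c\,\varphi(B)$ with a dimensional constant. Dividing by $\varphi(B)$, taking $p$-th roots and the supremum over $B$ finishes the proof; the degenerate case $\lambda_1=\lambda_2=0$ ($r=1$) is an identity and the case $\lambda_2=0$ or $\lambda_1=0$ needs no Jensen. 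With that exponent corrected, your argument is complete and the constant manifestly depends only on $n$, $\lambda_1$, $\lambda_2$, $p$.
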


\begin{proposition}\label{rhprop}
Let $1\le p<\infty$, $0\le \lambda_1, \lambda_2 <n$ and $0<\lambda_1 + \lambda_2 < n$. Let $w\in A(\mathcal M^{p}(\varphi))$. 

(a) If $B_1$ and $B_2$ are balls such that $B_1\subset B_2$, then it holds that
\begin{equation}\label{rh2}
\frac {w(B_1)}{w(B_2)}\lesssim \left( \frac {|B_1|}{|B_2|} \right)^{\frac {\lambda_1}{n-\lambda_2}}.
\end{equation}
In other words, $w\in RD_{\frac {\lambda_1}{n-\lambda_2}}$.

(b) The weight $w$ is in $A_{\frac{np+\lambda_1}{n-\lambda_2}}$.
\end{proposition}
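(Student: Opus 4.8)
The plan is to exploit the definition of $A(\mathcal M^{p}(\varphi))$ by plugging in the two nested balls and estimating the four relevant Morrey quantities $\|\chi_{B_1}\|_{\mathcal M^{p}(\varphi,w)}$, $\|\chi_{B_2}\|_{\mathcal M^{p}(\varphi,w)'}$ (for part (a)) and, symmetrically, $\|\chi_{B_2}\|_{\mathcal M^{p}(\varphi,w)}$, $\|\chi_{B_1}\|_{\mathcal M^{p}(\varphi,w)'}$ (for part (b)). The key lower bounds come from testing the norm \eqref{normdef} on the ball itself: for any ball $B$,
\[
\|\chi_B\|_{\mathcal M^{p}(\varphi,w)}\ge \left(\frac{w(B)}{\varphi(B)}\right)^{1/p}
= \left(\frac{w(B)}{r_B^{\lambda_1}w(B)^{\lambda_2/n}}\right)^{1/p}
= \frac{w(B)^{\frac{n-\lambda_2}{np}}}{r_B^{\lambda_1/p}},
\]
and dually, testing the K\"othe-dual norm \eqref{kothe} against $f=w^{-1}\chi_B/\|w^{-1}\chi_B\|_{\mathcal M^{p}(\varphi,w)}$ gives a lower bound $\|\chi_B\|_{\mathcal M^{p}(\varphi,w)'}\ge |B|/\|\chi_B\|_{\mathcal M^{p}(\varphi,w)}$ — but more usefully, for the \emph{upper} bound on $\|\chi_B\|_{\mathcal M^{p}(\varphi,w)'}$ I would use the embedding Lemma \ref{emblema}: dualizing $L^{\frac{pn}{n-\lambda_1-\lambda_2}}(w^{\frac{n-\lambda_2}{n-\lambda_1-\lambda_2}})\hookrightarrow \mathcal M^{p}(\varphi,w)$ yields $\mathcal M^{p}(\varphi,w)'\hookrightarrow L^{(\frac{pn}{n-\lambda_1-\lambda_2})'}\big((w^{\frac{n-\lambda_2}{n-\lambda_1-\lambda_2}})^{1-(\frac{pn}{n-\lambda_1-\lambda_2})'}\big)$, which lets me bound $\|\chi_B\|_{\mathcal M^{p}(\varphi,w)'}$ by an explicit integral of a power of $w$ over $B$.

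For part (a): combining the lower bound $\|\chi_{B_1}\|_{\mathcal M^{p}(\varphi,w)}\gtrsim w(B_1)^{\frac{n-\lambda_2}{np}}r_{B_1}^{-\lambda_1/p}$ with a dual lower bound for $\|\chi_{B_2}\|_{\mathcal M^{p}(\varphi,w)'}$ and the $A(\mathcal M^{p}(\varphi))$ inequality $\|\chi_{B_2}\|_{\mathcal M^{p}(\varphi,w)}\|\chi_{B_2}\|_{\mathcal M^{p}(\varphi,w)'}\lesssim |B_2|$, I can isolate the ratio $w(B_1)/w(B_2)$. Concretely, the cleanest route is: from $[w]_{A(\mathcal M^{p}(\varphi))}<\infty$ applied to $B_2$ one gets $\|\chi_{B_2}\|_{\mathcal M^{p}(\varphi,w)'}\gtrsim |B_2|/\|\chi_{B_2}\|_{\mathcal M^{p}(\varphi,w)}$; then test the dual norm against the normalized function supported on $B_2$ but restricted to $B_1$, or more directly use the Hölder-type inequality $\int_{B_1}1 = |B_1|\le \|\chi_{B_1}\|_{\mathcal M^{p}(\varphi,w)}\|\chi_{B_1}\|_{\mathcal M^{p}(\varphi,w)'}$ in tandem with monotonicity $\|\chi_{B_1}\|_{\mathcal M^{p}(\varphi,w)'}\le \|\chi_{B_2}\|_{\mathcal M^{p}(\varphi,w)'}$. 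Feeding in the explicit power-of-$w$ expression for $\|\chi_{B_2}\|_{\mathcal M^{p}(\varphi,w)}\gtrsim (w(B_2)/\varphi(B_2))^{1/p} = w(B_2)^{\frac{n-\lambda_2}{np}}r_{B_2}^{-\lambda_1/p}$ and the lower bound for $\|\chi_{B_1}\|_{\mathcal M^{p}(\varphi,w)}$ similarly, and rearranging, produces exactly $w(B_1)/w(B_2)\lesssim (|B_1|/|B_2|)^{\lambda_1/(n-\lambda_2)}$ after raising to the power $\frac{np}{n-\lambda_2}$ and absorbing the $r_B^{\lambda_1}$ factors into $(|B_1|/|B_2|)^{\lambda_1/n}$.

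For part (b): the target $A_q$ condition with $q=\frac{np+\lambda_1}{n-\lambda_2}$ means showing $\big(\frac{1}{|B|}\int_B w\big)\big(\frac{1}{|B|}\int_B w^{1-q'}\big)^{q-1}\lesssim 1$ for \emph{all} balls $B$. The plan is to express $w(B)^{1/p}$-type quantities via $\|\chi_B\|_{\mathcal M^{p}(\varphi,w)}$ — using now the \emph{upper} bound $\|\chi_B\|_{\mathcal M^{p}(\varphi,w)}\lesssim (w(B)/\varphi(B))^{1/p}$, which holds because for $\varphi$ of the form \eqref{embed} and a function supported on $B$ one reduces (Remark \ref{rm22}) to balls $Q\subset 2B$ and the reverse-doubling/doubling of $\varphi$ controls the supremum — and to express the dual quantity $\|\chi_B\|_{\mathcal M^{p}(\varphi,w)'}$ by the embedding-dual estimate from Lemma \ref{emblema}, which gives $\|\chi_B\|_{\mathcal M^{p}(\varphi,w)'}\lesssim \big(\int_B w^{\frac{\lambda_1+\lambda_2-n}{np-n+\lambda_1+\lambda_2}\cdot(\cdots)}\big)^{(\cdots)}$ — an explicit integral of the form $\big(\int_B w^{-\beta}\big)^{\gamma}$ for computable $\beta,\gamma$. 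Substituting both into $\|\chi_B\|_{\mathcal M^{p}(\varphi,w)}\|\chi_B\|_{\mathcal M^{p}(\varphi,w)'}\lesssim |B|$ and unwinding the exponents should collapse to precisely the $A_q$ balance with the claimed $q$; verifying that the arithmetic of exponents matches $\frac{np+\lambda_1}{n-\lambda_2}$ is the bookkeeping heart of the argument.

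\medskip

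\textbf{Main obstacle.} The principal difficulty I anticipate is \emph{not} the structural idea but getting a sharp two-sided control of $\|\chi_B\|_{\mathcal M^{p}(\varphi,w)}$ and especially of $\|\chi_B\|_{\mathcal M^{p}(\varphi,w)'}$ when $\varphi$ has the mixed form $r_B^{\lambda_1}w(B)^{\lambda_2/n}$: the $w(B)^{\lambda_2/n}$ factor couples $\varphi$ to the weight, so the naive ``$\varphi$ is essentially constant'' reductions from Remark \ref{rm22} and the doubling hypothesis need the doubling of $w$ (equivalently, of $\varphi$) — and the upper bound $\|\chi_B\|_{\mathcal M^{p}(\varphi,w)}\lesssim (w(B)/\varphi(B))^{1/p}$ requires checking that no sub-ball $Q\subset 2B$ makes the ratio $\frac{w(Q)}{\varphi(Q)}$ much larger, which in turn uses reverse doubling of $\varphi$ in the form \eqref{rdcond}. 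The exponent arithmetic to land exactly on $q=\frac{np+\lambda_1}{n-\lambda_2}$ is the second delicate point: it forces using the \emph{sharp} embedding exponent from Lemma \ref{emblema} rather than any convenient one, and one must track the conjugate exponents of $\frac{pn}{n-\lambda_1-\lambda_2}$ and of the weight power $\frac{n-\lambda_2}{n-\lambda_1-\lambda_2}$ carefully through the duality.
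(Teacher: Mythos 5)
Your ingredients are the right ones and essentially the paper's (testing the Morrey norm \eqref{normdef} on balls, and using the embedding of Lemma \ref{emblema} to control the K\"othe-dual norm), but the way you assemble them in part (a) does not produce \eqref{rh2}. Write $\|\cdot\|$ for $\|\cdot\|_{\mathcal M^{p}(\varphi,w)}$ and $\|\cdot\|'$ for the dual norm. Your ``cleanest route'' chain $|B_1|\le\|\chi_{B_1}\|\,\|\chi_{B_1}\|'$, $\|\chi_{B_1}\|'\le\|\chi_{B_2}\|'$, $\|\chi_{B_2}\|\,\|\chi_{B_2}\|'\le[w]\,|B_2|$ only yields $\|\chi_{B_2}\|\le[w]\tfrac{|B_2|}{|B_1|}\|\chi_{B_1}\|$; to extract an upper bound on $w(B_1)/w(B_2)$ from this you would need an \emph{upper} bound on $\|\chi_{B_1}\|$ in terms of $w(B_1)$, and by Remark \ref{rm73} that upper bound is equivalent to \eqref{rh2} itself, so the route is circular (with only the lower bounds you list, the chain gives a doubling-type estimate, i.e.\ the opposite inequality). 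The step that works, and that the paper uses, is to bound $\|\chi_{B_2}\|$ --- the \emph{large} ball --- from below by $(w(B_1)/\varphi(B_1))^{1/p}$ by evaluating the supremum in \eqref{normdef} at the sub-ball $B_1$; equivalently, your lower bound for $\|\chi_{B_1}\|$ combined with the monotonicity $\|\chi_{B_1}\|\le\|\chi_{B_2}\|$, which is the one you need instead of the dual monotonicity you invoke. Multiplying this by the dual lower bound $\|\chi_{B_2}\|'\gtrsim\left(\int_{B_2}w^{\alpha}\right)^{\frac{n(p-1)+\lambda_1+\lambda_2}{np}}\gtrsim w(B_2)^{\frac{\lambda_2-n}{np}}|B_2|^{\frac{np+\lambda_1}{np}}$ (embedding plus a H\"older step, with $\alpha=\frac{\lambda_2-n}{n(p-1)+\lambda_1+\lambda_2}$) and capping the product by $[w]\,|B_2|$ gives \eqref{rh2}.

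In part (b) the logic is inverted: substituting \emph{upper} bounds for $\|\chi_B\|$ and $\|\chi_B\|'$ into $\|\chi_B\|\,\|\chi_B\|'\le[w]\,|B|$ proves nothing. You need \emph{lower} bounds on both factors so that the $A_q$ quantity is dominated by the $A(\mathcal M^{p}(\varphi))$ constant: the trivial $\|\chi_B\|\ge(w(B)/\varphi(B))^{1/p}$ (the hard upper bound of Remark \ref{rm73}, which depends on \eqref{rh2}, is not needed here) and the dual lower bound above, whose exponent $\alpha$ equals $1-q'$ for $q=\frac{np+\lambda_1}{n-\lambda_2}$. Note also that dualizing Lemma \ref{emblema} reverses the inclusion to $\mathcal M^{p}(\varphi,w)'\hookrightarrow L^{P'}(W^{1-P'})$, which is a \emph{lower} bound for $\|\chi_B\|'$, not the upper bound you assert; fortunately the lower bound is the direction you actually need, and it reproduces exactly the paper's estimate \eqref{eq74}. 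With the directions corrected, your exponent bookkeeping does land on $q=\frac{np+\lambda_1}{n-\lambda_2}$, but as written neither part of the plan is a proof.
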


\begin{proof}
(a) From the definition of the norm we have
 \begin{equation}\label{eq73}
\|\chi _{B_2}\|_{\mathcal M^{p}(\varphi,w)} \ge \left( \frac {w(B_1)^{1-\frac{\lambda_2}n}}{|B_1|^{\frac{\lambda_1}n} }\right)^{\frac 1p}.
\end{equation}
On the other hand, for $\alpha$ to be chosen later we write
 \begin{equation*}
\|\chi _{B_2}\|_{\mathcal M^{p}(\varphi,w)'} \ge \frac {\int_{B_2}w^\alpha}{\|w^\alpha\chi _{B_2}\|_{\mathcal M^{p}(\varphi,w)}}\gtrsim \frac {\int_{B_2}w^\alpha}{\|w^\alpha\chi _{B_2}\|_{L^{\frac {pn}{n-\lambda_1-\lambda_2}}(w^{\frac{n-\lambda_2}{n-\lambda_1-\lambda_2}})}},
\end{equation*}
using Lemma \ref{emblema}. We choose $\alpha$ such that
\begin{equation*}
\alpha = \alpha\, \frac {pn}{n-\lambda_1-\lambda_2} + \frac{n-\lambda_2}{n-\lambda_1-\lambda_2},
\end{equation*}
that is $\alpha = \frac{\lambda_2-n}{n(p-1)+\lambda_1+\lambda_2}$. Then
 \begin{equation}\label{eq74}
\|\chi _{B_2}\|_{\mathcal M^{p}(\varphi,w)'} \ge \left(\int_{B_2}w^\alpha\right)^{\frac{n(p-1)+\lambda_1+\lambda_2}{np}}.
\end{equation}
Use now the inequality
\begin{equation*}
|B_2|\le \left(\int_{B_2}w\right)^{\frac 1q} \left(\int_{B_2}w^{\frac 1{1-q}}\right)^{\frac 1{q'}}
\end{equation*}
with $1/(1-q)=\alpha$. This gives
\begin{equation*}
\left(\int_{B_2}w^\alpha\right)^{\frac{n(p-1)+\lambda_1+\lambda_2}{np+\lambda_1}} \left(\int_{B_2}w\right)^{\frac{n-\lambda_2}{np+\lambda_1}}\ge |B_2|,
\end{equation*}
which implies
\begin{equation*}
\|\chi _{B_2}\|_{\mathcal M^{p}(\varphi,w)'} \ge \left(\int_{B_2}w\right)^{\frac{\lambda_2-n}{np}}|B_2|^{\frac{np+\lambda_1}{np}}.
\end{equation*}
Therefore,
\begin{equation*}
\frac {\|\chi _{B_2}\|_{\mathcal M^{p}(\varphi,w)} \|\chi _{B_2}\|_{\mathcal M^{p}(\varphi,w)'}}{|B_2|} \ge 
\left( \frac {w(B_1)^{1-\frac{\lambda_2}n}}{|B_1|^{\frac{\lambda_1}n} } \frac {|B_2|^{\frac{\lambda_1}n} } {w(B_2)^{1-\frac{\lambda_2}n}} 
\right)^{\frac 1p}.
\end{equation*}
We get \eqref{rh2} from this inequality and the $A(\mathcal M^{p}(\varphi))$ condition.

(b) For a ball $B$ we use \eqref{eq73} with $B_1=B_2=B$ and \eqref{eq74} with $B_2=B$. Then
\begin{equation*}
\aligned
&\frac {\|\chi _{B}\|_{\mathcal M^{p}(\varphi,w)} \|\chi _{B}\|_{\mathcal M^{p}(\varphi,w)'}}{|B|} \\
&\qquad\ge \frac 1{|B|} \left( \frac {w(B)^{1-\frac{\lambda_2}n}}{|B|^{\frac{\lambda_1}n} }\right)^{\frac 1p}\left(\int_{B}w^{\frac{\lambda_2-n}{n(p-1)+\lambda_1+\lambda_2}}\right)^{\frac{n(p-1)+\lambda_1+\lambda_2}{np}}\\
&\qquad =  \left( \frac {w(B)^{\frac{n-\lambda_2}{np+\lambda_1}}\ w^{\frac{\lambda_2-n}{n(p-1)+\lambda_1+\lambda_2}}(B)^{\frac{n(p-1)+\lambda_1+\lambda_2}{np+\lambda_1}}}{|B|}\right)^{\frac {np+\lambda_1}{np}}.
\endaligned
\end{equation*}
Therefore, if $w\in A(\mathcal M^{p}(\varphi))$ then $w\in A_{\frac{np+\lambda_1}{n-\lambda_2}}$.
\end{proof}

In the case $\lambda_2=0$ and $\lambda_1\in (0,n)$ we are in the Samko-type space and condition \eqref{rh2} is written in the form \eqref{rhsamko}, that is, $w\in RD_{\frac {\lambda_1}n}$. 

 In our previous papers \cite{DR18, DR19, DR20} we imposed a reverse H\"older condition to control the quotients of the form  ${w(B_1)}/{w(B_2)}$, using \eqref{ineqRH}. Actually, a reverse doubling condition would be enough. In \cite{DR19} we even wondered about the necessity of a reverse H\"older condition with exponent $\sigma=n/(n-\lambda)$, for which \eqref{ineqRH} would imply \eqref{rhsamko} (see the comments at the end of \cite[Section 5]{DR19}). With the current result we see that the weaker condition $w\in RD_{\frac {\lambda_1}n}$ is necessary.  
 
From Proposition \ref{rhprop} we deduce in particular that $w$ is doubling. For other values of $\lambda_1$ and $\lambda_2$ not considered in the proposition we assume that $w$ is doubling. This ensures that $\varphi$ is doubling. We also have that $w$ is reverse doubling and if $w\in RD_\delta$ we have for balls $B_1\subset B_2$,
\begin{equation*}
\frac{\varphi(B_1)}{\varphi(B_2)}=\left(\frac{r_{B_1}}{r_{B_2}}\right)^{\lambda_1} \left(\frac{w(B_1)}{w(B_2)}\right)^{\lambda_2/n}\lesssim \left(\frac{r_{B_1}}{r_{B_2}}\right)^{\lambda_1+\delta \lambda_2}.
\end{equation*}
Then we impose the condition $\lambda_1+\delta \lambda_2>0$, which implies that $\varphi$ is reverse doubling. This condition is weaker than the condition  $\lambda_1\sigma_w'+\lambda_2>0$ imposed in \cite{DR20}, due to \eqref{ineqRH}. In particular, we do not assume a priori that $w$ satisfies a reverse H\"older condition. Notice that $\lambda_1$ can be negative (in particular, it will be always negative for $\lambda_2=n$).

\begin{remark}\label{rm73}
 When $w$ satisfies \eqref{rh2}, we claim that the norm of the characteristic function of the ball $B$ in $\mathcal M^{p}(\varphi,w)$ is comparable to 
 \begin{equation}\label{eqrm73}
\left(\frac {w(B)^{1-\lambda_2/n}}{|B|^{\lambda_1/n}}\right)^{1/p}.
\end{equation}
Indeed, according to Remark \ref{rm22} we only need to consider in \eqref{normdef} balls contained in $2B$. For such a ball $Q$, we have
\begin{equation*}
\frac {w(Q\cap B)}{|Q|^{\lambda_1/n}w(Q)^{\lambda_2/n}}\le \frac {w(Q)^{1-\lambda_2/n}}{|Q|^{\lambda_1/n}}\lesssim \frac {w(2B)^{1-\lambda_2/n}}{|2B|^{\lambda_1/n}}\lesssim \frac {w(B)^{1-\lambda_2/n}}{|B|^{\lambda_1/n}},
\end{equation*}
which implies the claim. In particular, this shows that characteristic functions of balls are in the Morrey space. Notice also that if the norm of $\chi_B$ in $\mathcal M^{p}(\varphi,w)$ is comparable to \eqref{eqrm73}, then \eqref{rh2} holds.

In the case $0<\lambda_2 \le n$ and $0>\lambda_1>-\delta \lambda_2$, \eqref{eqrm73} for $\|\chi_B\|_{\mathcal M^{p}(\varphi,w)}$ is immediate without further assumption on $w$. 
\end{remark}

In the next result we prove that $w\in A_p$ together with condition \eqref{rh2}, when it is needed, imply that the Muckenhoupt-Morrey condition holds. This could be deduced from  boundedness results for the Hardy-Littlewood maximal operator in \cite{DR20} and Theorem \ref{necbasis}, but we give here a direct proof. In this way, the result for $M$ in  \cite{DR20} is obtained as a consequence of Theorem \ref{teohl}.      

\begin{proposition}\label{apsuf}
Let $1\le p<\infty$, $0\le \lambda_1, \lambda_2 <n$, $0<\lambda_1 + \lambda_2 < n$. Then if $w\in A_p$ and $w\in RD_{\frac {\lambda_1}{n-\lambda_2}}$ (that is, $w$ satisfies \eqref{rh2}), then $w\in A(\mathcal M^{p}(\varphi))$. 
Furthermore, if $w\in RD_{\delta}$ with $\delta>\lambda_1/(n-\lambda_2)$, then $w\in A_p$ satisfies the stronger condition \eqref{extrasuf} for $q=p$, and also for some $1<q<p$ if $p>1$. 

If $w\in A_p\cap RD_{\delta}$, $0<\lambda_2 \le n$, and $0>\lambda_1>-\delta \lambda_2$, it holds \eqref{extrasuf} and in particular $w\in A(\mathcal M^{p}(\varphi))$.
\end{proposition}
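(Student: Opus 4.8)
The plan is to prove the three assertions in the order they are stated, exploiting that under the stated hypotheses the norm of $\chi_B$ in $\mathcal M^{p}(\varphi,w)$ is comparable to the explicit quantity in \eqref{eqrm73}, and that $w\in A_p$ gives good control of $w^{1-p'}$ on balls. First I would record, using Remark \ref{rm73}, that in all three cases the hypothesis ($w\in RD_{\frac{\lambda_1}{n-\lambda_2}}$ in the first case, $w\in RD_\delta$ with $\delta>\lambda_1/(n-\lambda_2)$ in the second, and $0>\lambda_1>-\delta\lambda_2$ in the third) guarantees
\[
\|\chi_B\|_{\mathcal M^{p}(\varphi,w)}\sim\left(\frac{w(B)^{1-\lambda_2/n}}{|B|^{\lambda_1/n}}\right)^{1/p}.
\]
So for the first assertion it remains to bound $\|\chi_B\|_{\mathcal M^{p}(\varphi,w)'}$ from above by a constant times $|B|\big/\|\chi_B\|_{\mathcal M^{p}(\varphi,w)}$, i.e. by $|B|\big(|B|^{\lambda_1/n}/w(B)^{1-\lambda_2/n}\big)^{1/p}$. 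For this I would start from $\int_B f\le(\int_B f^p w)^{1/p}(\int_B w^{1-p'})^{1/p'}$, estimate $(\int_B f^p w)^{1/p}$ by $\varphi(B)^{1/p}\|f\|_{\mathcal M^p(\varphi,w)}=\big(r_B^{\lambda_1}w(B)^{\lambda_2/n}\big)^{1/p}\|f\|$, use the $A_p$ inequality $\big(\int_B w^{1-p'}\big)^{1/p'}\lesssim |B|\,w(B)^{-1/p}$, take the supremum over $\|f\|_{\mathcal M^p(\varphi,w)}\le1$, and check that the exponents of $w(B)$ and $|B|$ combine to exactly the target expression. Multiplying the two norm estimates then yields $[w]_{A(\mathcal M^p(\varphi))}<\infty$.

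For the second assertion I would verify the stronger condition \eqref{extrasuf}. Take $q=p$ first. As above $\|\chi_B\|_{\mathcal M^{p}(\varphi,w)}\sim\big(w(B)^{1-\lambda_2/n}/|B|^{\lambda_1/n}\big)^{1/p}$, so it suffices to estimate $\|M(\chi_B)^{1/s}\|_{\mathcal M^{p}(\varphi,w)'}$ by a constant times $|B|\big(|B|^{\lambda_1/n}/w(B)^{1-\lambda_2/n}\big)^{1/p}$ for a suitable $s>1$. Given a competitor $f$ with $\|f\|_{\mathcal M^p(\varphi,w)}\le1$, split $\int M(\chi_B)^{1/s} f$ into the part over $2B$ and the part over $\rn\setminus 2B$. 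On $2B$ use $M(\chi_B)\le1$ and Hölder as in the first assertion, absorbing the factor into $\varphi(2B)^{1/p}\sim\varphi(B)^{1/p}$. On $\rn\setminus 2B$, $M(\chi_B)(y)\sim(r_B/|y-c_B|)^n$, so $M(\chi_B)^{1/s}$ decays like $|y-c_B|^{-n/s}$; split the complement into dyadic annuli $2^{j+1}B\setminus 2^jB$, on each annulus estimate $\int f\lesssim \varphi(2^jB)^{1/p}\|f\|\,w(2^jB)^{-1/p}|2^jB|$ via Hölder and $A_p$, insert the factor $(2^{-jn/s})$ and $r_B^{n/s}$, and sum the geometric-type series. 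Here the reverse-doubling hypothesis $w\in RD_\delta$ with $\delta>\lambda_1/(n-\lambda_2)$ enters: it makes the ratio $\varphi(2^jB)^{1/p}w(2^jB)^{-1/p}$ grow at a controlled polynomial rate in $2^j$, strictly slower than the decay $2^{-jn/s}$ once $s$ is chosen close enough to $1$; this is exactly where the strict inequality $\delta>\lambda_1/(n-\lambda_2)$ is used, and I expect this summability bookkeeping — choosing $s$ so that the exponent of $2^j$ is negative — to be the main obstacle. Once \eqref{extrasuf} holds for $q=p$, the statement for some $1<q<p$ when $p>1$ follows from part (ii) of the Remark after Theorem \ref{teo1K} (one can decrease $q$, equivalently increase the reference exponent, since there is room by the strict inequality). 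Finally $w\in A_p\subset A_{p,\mathrm{loc}}$ trivially, so the hypotheses of Theorem \ref{teo1K} are met; and \eqref{extrasuf} implies $A(\mathcal M^p(\varphi))$ as noted after that theorem (taking $f=w^{1-p'}\chi_B$-type test functions, exactly as in Remark \ref{rem42} and \eqref{calsuf}).

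For the third assertion, with $0<\lambda_2\le n$ and $0>\lambda_1>-\delta\lambda_2$, the point is that now $\lambda_1$ is negative, so by the last sentence of Remark \ref{rm73} the comparability $\|\chi_B\|_{\mathcal M^p(\varphi,w)}\sim\big(w(B)^{1-\lambda_2/n}/|B|^{\lambda_1/n}\big)^{1/p}$ holds with no extra hypothesis, and moreover the quantity $\varphi(2^jB)w(2^jB)^{-1}$ now decays rather than grows along the dyadic annuli (because $\lambda_1<0$ contributes $2^{j\lambda_1/p}$ and, via $w\in RD_\delta$, the $w$-part is controlled), so the annular sum in the $\rn\setminus 2B$ estimate converges for $s$ close to $1$ with even more room; the condition $\lambda_1>-\delta\lambda_2$ is precisely what keeps $\varphi$ reverse doubling (equivalently $\lambda_1+\delta\lambda_2>0$, as discussed before the proposition) so that all the $\varphi$-manipulations — doubling of $\varphi(2B)$, the bound $\varphi(2^jB)\lesssim 2^{j(\lambda_1+\delta\lambda_2)}\varphi(B)$ — remain valid. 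Running the same two-part splitting as in the second assertion then yields \eqref{extrasuf}, and hence $w\in A(\mathcal M^p(\varphi))$, completing the proof. $\hfill\qed$
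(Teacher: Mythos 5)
Your overall strategy coincides with the paper's: bound $\|\chi_B\|_{\mathcal M^{p}(\varphi,w)}$ by \eqref{eqrm73} via Remark \ref{rm73}, bound $\|\chi_B\|_{\mathcal M^{p}(\varphi,w)'}$ by H\"older plus the $A_p$ estimate $w^{1-p'}(B)^{1/p'}\lesssim |B|\,w(B)^{-1/p}$, and for \eqref{extrasuf} control $M(\chi_B)^{1/s}$ on dyadic dilates $2^kB$, using $RD_\delta$ to turn $w(2^kB)^{(\lambda_2/n-1)/p}$ into $w(B)^{(\lambda_2/n-1)/p}2^{-k\delta(n-\lambda_2)/p}$ and choosing $s$ close to $1$ so that the total exponent $\lambda_1/p+n/s'-\delta(n-\lambda_2)/p$ of $2^k$ is negative; your annulus-by-annulus duality estimate is just the unwound form of the paper's pointwise bound \eqref{pb74} followed by the triangle inequality in the dual norm, and the exponent bookkeeping is identical, including the role of the strict inequality $\delta>\lambda_1/(n-\lambda_2)$ and the simplification when $\lambda_1<0$.

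There is, however, one genuine misstep: your justification of ``\eqref{extrasuf} also holds for some $1<q<p$.'' You invoke part (ii) of the remark after Theorem \ref{teo1K}, but that remark only allows one to \emph{increase} $q$ (replace $q$ by $\tilde q\in(q,p)$); it gives no way to pass from $q=p$ down to a smaller exponent, which is the direction you need. The correct argument, and the one the paper uses, is the openness of the Muckenhoupt classes: since $w\in A_p$ with $p>1$, there is $q\in(1,p)$ with $w\in A_q$, and one reruns the entire computation with $q$ in place of $p$ (all the norm estimates for $\|\chi_{2^kB}\|_{\mathcal M^{q}(\varphi,w)'}$ go through verbatim because they only used $w\in A_q$). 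A second, smaller omission: your H\"older step $\left(\int_B w^{1-p'}\right)^{1/p'}\lesssim |B|\,w(B)^{-1/p}$ is written only for $p>1$, while the statement includes $p=1$; there one must instead use the $A_1$ bound $\int_B f\lesssim \left(\int_B fw\right)|B|/w(B)$ to estimate $\|\chi_B\|_{\mathcal M^{1}(\varphi,w)'}$, as the paper does. Both points are easily repaired, but as written the $q<p$ claim does not follow from what you cite.
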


\begin{proof}
First we use Remark \ref{rm73} to write a bound for $\|\chi _{B}\|_{\mathcal M^{p}(\varphi,w)}$, namely,\begin{equation}\label{norma1}
\|\chi _{B}\|_{\mathcal M^{p}(\varphi,w)}\lesssim \left(\frac {w(B)^{1-\lambda_2/n}}{|B|^{\lambda_1/n}}\right)^{1/p}.
\end{equation}

The bound for $\|\chi _{B}\|_{\mathcal M^{p}(\varphi,w)'}$ is as follows. If $\|f\|_{\mathcal M^{p}(\varphi,w)}\le 1$ and $p>1$ we have 
\begin{equation*}
\int_B f\le \left(\int_B f^p w\right)^{1/p} w^{1-p'}(B)^{1/p'}\le [|B|^{\lambda_1/n}w(B)^{\lambda_2/n}]^{1/p}w^{1-p'}(B)^{1/p'},
\end{equation*}
and the right-hand side is a bound for $\|\chi _{B}\|_{\mathcal M^{p}(\varphi,w)'}$. If $p=1$, 
\begin{equation*}
\int_B f\lesssim \left(\int_B f w\right) \frac{|B|}{w(B)} \le |B|^{1+\lambda_1/n}w(B)^{-1+\lambda_2/n},
\end{equation*}
and we obtain a bound for $\|\chi _{B}\|_{\mathcal M^{1}(\varphi,w)'}$.

Inserting the obtained bounds in the definition of $A(\mathcal M^{p}(\varphi))$ we see that the inclusion of the statement holds. 

Assume now that $w\in RD_{\delta}$ with $\delta>\lambda_1/(n-\lambda_2)$. To check \eqref{extrasuf} we need to get a bound for  $\|M(\chi _{B})^{1/s}\|_{\mathcal M^{p}(\varphi,w)'}$. Using the pointwise bound 
\begin{equation}\label{pb74}
M(\chi _{B})^{1/s}(x)\lesssim \sum_{k=1}^\infty 2^{-kn/s} \chi _{2^kB}(x)
\end{equation}
and the previous calculations, we have for $p>1$  
\begin{equation*}
\aligned
\|M(\chi _{B})^{1/s}\|_{\mathcal M^{p}(\varphi,w)'} &\lesssim \sum_{k=1}^\infty 2^{-kn/s} \|\chi _{2^kB}\|_{\mathcal M^{p}(\varphi,w)'}\\
&\lesssim \sum_{k=1}^\infty 2^{-kn/s} [|2^kB|^{\lambda_1/n}w(2^kB)^{\lambda_2/n}]^{1/p}w^{1-p'}(2^kB)^{1/p'}.
\endaligned
\end{equation*}
Since $w\in A_p$ we have 
\begin{equation*}
w^{1-p'}(2^kB)^{1/p'}\lesssim |2^k B| w(2^kB)^{-1/p}.
\end{equation*}
Therefore, 
\begin{equation}\label{eq77}
\|M(\chi _{B})^{1/s}\|_{\mathcal M^{p}(\varphi,w)'} \lesssim \sum_{k=1}^\infty 2^{k\left (\frac n{s'}+\frac{\lambda_1}p\right)} |B|^{1+\frac{\lambda_1}{np}}w(2^kB)^{\left (\frac {\lambda_2}n-1\right)\frac 1p}.
\end{equation}
(This estimate is valid for $p=1$ too, using  for $\|2^k\chi _{B}\|_{\mathcal M^{1}(\varphi,w)'}$ the bound obtained above.) The last exponent in \eqref{eq77} is negative, hence using that $w\in RD_\delta$ we have 
\begin{equation*}
w(2^kB)^{\left (\frac {\lambda_2}n-1\right)\frac 1p}\lesssim w(B)^{\left (\frac {\lambda_2}n-1\right)\frac 1p} 2^{-k\delta(n-\lambda_2)\frac 1p}.
\end{equation*}
Inserting this into \eqref{eq77} we get a convergent series by choosing $s$ close enough to $1$ such that the exponent of $2^k$ is negative and this is possible if $\delta>\lambda_1/(n-\lambda_2)$ as required. With this choice of $s$ we obtain 
\begin{equation}\label{eq78}
\|M(\chi _{B})^{1/s}\|_{\mathcal M^{p}(\varphi,w)'} \lesssim |B|^{1+\frac{\lambda_1}{np}}w(B)^{\left (\frac {\lambda_2}n-1\right)\frac 1p},
\end{equation}
which together with \eqref{norma1} implies \eqref{extrasuf}.

When $\lambda_2=n$ calculations are simpler (no reverse doubling condition is needed for $w$). In \eqref{eq77} the last term is missing and the series is convergent by choosing $s$ such that $n/s'+\lambda_1/p<0$, and this is always possible because $\lambda_1<0$. Then we have \eqref{eq78} without the last factor and this is what we need.

In the case $p>1$ we know that there exists $q\in (1,p)$ such that $w\in A_q$. Then  it satisfies \eqref{extrasuf} for such value of $q$. 

For $w\in A_p\cap RD_{\delta}$, $0<\lambda_2 \le n$, and $0>\lambda_1>-\delta \lambda_2$, the proof also works. The estimate \eqref{norma1} holds as was indicated in Remark \ref{rm73}, and the computations for $\|M(\chi _{B})^{1/s}\|_{\mathcal M^{p}(\varphi,w)'}$ are also valid up to \eqref{eq77}. Now the exponent of $2^k$ is clearly negative for $s$ close enough to $1$. 
\end{proof}



In \cite{DR19,DR20} the result in the previous proposition was improved to weights of the form $v(x)=|x|^\alpha w(x)$ where $w\in A_p$ and $\alpha$ is a suitable positive number. We check directly that those weights are also included in the Muckenhoupt-Morrey condition. Notice that for some $q$ depending on $\alpha$, $v$ is in $A_q$, because $|x|^\alpha$ is a negative power of an $A_1$-weight.

The range of values of $\alpha$ is defined as follows. Assume that $w\in RD_\theta(0)$ in the sense of Section \ref{bi}, that is, 
\begin{equation}\label{RDtheta}
 \frac{w(B(0, r))}{ w(B(0, R)) }\lesssim \left(\frac rR\right)^{n\theta}\qquad \text{for all } 0<r<R.
\end{equation}
Assume that $0\le \lambda_2\le n$ and $\lambda_1 + \lambda_2 \theta> 0$. Then $\alpha$ satisfies 
\begin{equation} \label{bertiz5}
	0\leq\alpha<
	\begin{cases}
	  \frac{n}{n-\lambda_2 }\left(\lambda_1+ \lambda_2 \theta\right), & \quad\lambda_2<n,\\
	  \infty, & \quad\lambda_2= n.
  \end{cases}
\end{equation}

The following lemma is in \cite[Lemma 2.9]{DR20} (with less restrictions on $\lambda_2$).

\begin{lemma}\label{key2}
Let $w\in A_p$. (a) Let $B$ be a ball such that $r_B\le |c_B|/4$ and $f$ nonnegative. Then for arbitrary $\alpha$ it holds 
\begin{equation*}
\left(\frac 1{|B|}\int_B f\right)^p\le C r_B^{\lambda_1} v(B)^{\frac{\lambda_2}n-1}\|f\|_{\mathcal M^{p}(\varphi,v)}^p.
\end{equation*}
(b) Let $B$ centered at $0$ and $f$ nonnegative. Then for $0\le \lambda_2\le n$ and $\alpha$ as in \eqref{bertiz5} it holds
\begin{equation*}
\left(\frac 1{|B|}\int_B f\right)^p\le C r_B^{\lambda_1+\alpha \left(\frac{\lambda_2}n-1\right)}w(B)^{\frac{\lambda_2}n-1}\|f\|_{\mathcal M^{p}(\varphi,v)}^p.
\end{equation*}
\end{lemma}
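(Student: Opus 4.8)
The plan is to prove both inequalities by the same scheme: a H\"older inequality on the relevant ball in which $v=|x|^\alpha w$ is replaced by a constant multiple of $w$, combined with the defining inequality $\int_B|f|^pv\le r_B^{\lambda_1}v(B)^{\lambda_2/n}\|f\|_{\mathcal M^{p}(\varphi,v)}^p$ of the Morrey norm and the $A_p$ (resp. $A_1$) condition for $w$.

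For part (a), the hypothesis $r_B\le|c_B|/4$ forces $|x|\sim|c_B|$ for every $x\in B$, so that $v\sim|c_B|^\alpha w$ on $B$; in particular $v(B)\sim|c_B|^\alpha w(B)$ and, when $p>1$, $v^{1-p'}(B)^{1/p'}\sim|c_B|^{-\alpha/p}w^{1-p'}(B)^{1/p'}$. Then, writing $f=(fv^{1/p})v^{-1/p}$ and applying H\"older,
\[
\frac1{|B|}\int_Bf\le\frac1{|B|}\Bigl(\int_Bf^pv\Bigr)^{1/p}v^{1-p'}(B)^{1/p'}\le\frac1{|B|}\bigl(r_B^{\lambda_1}v(B)^{\lambda_2/n}\bigr)^{1/p}v^{1-p'}(B)^{1/p'}\,\|f\|_{\mathcal M^{p}(\varphi,v)},
\]
and one inserts the $A_p$ bound $w^{1-p'}(B)^{1/p'}\lesssim|B|\,w(B)^{-1/p}$ together with the comparisons above; after rewriting $w(B)^{-1/p}\sim|c_B|^{\alpha/p}v(B)^{-1/p}$ all powers of $|c_B|$ cancel and one is left exactly with the asserted bound. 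For $p=1$ one replaces H\"older by $\int_Bf\le(\inf_Bv)^{-1}\int_Bfv$ and uses that $A_1$ for $w$ together with $v\sim|c_B|^\alpha w$ give $\inf_Bv\gtrsim v(B)/|B|$.

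For part (b) the weight is no longer comparable to $w$ on all of $B=B(0,R)$, so one decomposes $B$ into the dyadic annuli $C_k=B(0,2^{-k}R)\setminus B(0,2^{-k-1}R)$, $k\ge0$, and writes $\int_Bf=\sum_{k\ge0}\int_{C_k}f$. On $C_k$ one has $|x|\sim2^{-k}R$, hence $v\sim(2^{-k}R)^\alpha w$ there; running the argument of part (a) on $C_k$, but bounding $\int_{C_k}f^pv\le\int_{B(0,2^{-k}R)}f^pv$ by the Morrey norm on the ball $B(0,2^{-k}R)$ and using $A_p$ for $w$ on that ball, yields $\int_{C_k}f\lesssim(2^{-k}R)^{\mu}\,w(B(0,2^{-k}R))^{(\lambda_2-n)/(np)}\,\|f\|_{\mathcal M^{p}(\varphi,v)}$, where $\mu=n+\bigl(\lambda_1+\alpha(\lambda_2/n-1)\bigr)/p$. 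Bounding $w(B(0,2^{-k}R))$ from below by a power of $2^{-k}$ times $w(B(0,R))$ (using the $A_p$ inequality $w(E)\gtrsim w(B')(|E|/|B'|)^p$, and the reverse doubling $w\in RD_\theta(0)$ when $\lambda_2<n$) turns $\sum_k\int_{C_k}f$ into a geometric series that sums to $\lesssim R^{\mu}w(B)^{(\lambda_2-n)/(np)}\|f\|_{\mathcal M^{p}(\varphi,v)}$; since $|B|\sim R^n$ and $\mu-n=(\lambda_1+\alpha(\lambda_2/n-1))/p$, raising to the $p$-th power gives the claim. For $p=1$ one argues the same way with $\int_{C_k}f\le(\inf_{C_k}v)^{-1}\int_{C_k}fv$ instead of H\"older.

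The main obstacle is the summation in part (b): one has to check that the exponent of $2^{-k}$ in the resulting geometric series is strictly positive, and this is exactly where the restriction on $\alpha$ in \eqref{bertiz5} enters, the split into the cases $\lambda_2<n$ and $\lambda_2=n$ corresponding to whether or not the reverse doubling of $w$ at the origin is needed. Everything else is bookkeeping; the only point requiring attention there is to keep all implicit constants depending solely on $n$, $p$, $\lambda_1$, $\lambda_2$, $[w]_{A_p}$ and the reverse doubling constant of $w$, and not on $f$ or $B$.
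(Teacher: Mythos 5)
The paper does not actually prove this lemma: it is quoted from \cite[Lemma 2.9]{DR20}, so there is no in-text argument to compare yours against. Taken on its own, your proof is essentially correct and self-contained. Part (a) is exactly the expected computation: on a ball with $r_B\le|c_B|/4$ one has $|x|\sim|c_B|$, hence $v\sim|c_B|^\alpha w$ there, and the powers of $|c_B|$ cancel after H\"older, the trivial Morrey bound $\int_Bf^pv\le r_B^{\lambda_1}v(B)^{\lambda_2/n}\|f\|_{\mathcal M^{p}(\varphi,v)}^p$, and the $A_p$ estimate $w^{1-p'}(B)^{1/p'}\lesssim|B|\,w(B)^{-1/p}$ (with the usual $\operatorname{ess\,inf}$ substitute for $p=1$). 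The dyadic-annuli decomposition in part (b) is the natural way to reduce to the regime of part (a), and it also sidesteps the possible non-integrability of $v^{1-p'}$ at the origin when $\alpha(p'-1)\ge n$.

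The one step you should tighten is the summation in (b). Reverse doubling $w\in RD_\theta(0)$ gives an \emph{upper} bound $w(B(0,2^{-k}R))\lesssim 2^{-kn\theta}w(B)$, not a lower one, so it cannot be what "bounds $w(B(0,2^{-k}R))$ from below"; the lower bound $w(B(0,2^{-k}R))\gtrsim 2^{-knp}w(B)$ comes from $A_p$ alone. With only that input your geometric series converges precisely when $\alpha<\frac{n}{n-\lambda_2}(\lambda_1+p\lambda_2)$, which is not literally \eqref{bertiz5}. This still suffices, because for an $A_p$ weight any admissible $\theta$ in \eqref{RDtheta} satisfies $\theta\le p$ (compare the $A_p$ lower bound with the reverse doubling upper bound for $B(0,r)\subset B(0,R)$ and let $r/R\to0$), so \eqref{bertiz5} implies your convergence condition; but this observation must be added. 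Alternatively, to land exactly on the threshold of \eqref{bertiz5}, keep the two occurrences of $w(B(0,2^{-k}R))$ separate: apply the reverse doubling \emph{upper} bound to the factor $v(B(0,2^{-k}R))^{\lambda_2/n}$ coming from the Morrey norm, and the $A_p$ \emph{lower} bound to the factor $w(B(0,2^{-k}R))^{-1/p}$ coming from H\"older. The exponent of $2^{-k}$ in the series then becomes $\frac1p\bigl(\lambda_1+\theta\lambda_2+\alpha(\tfrac{\lambda_2}{n}-1)\bigr)$, whose positivity is exactly \eqref{bertiz5}. Either repair completes the argument; everything else, including the $p=1$ and $\lambda_2=n$ cases, checks out.
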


These estimates can be written as estimates for $\|\chi_B\|_{\mathcal M^{p}(\varphi,v)'}$ by taking the supremum among those $f$ such that $\|f\|_{\mathcal M^{p}(\varphi,v)}^p\le 1$. More precisely, for $B$ such that $r_B\le |c_B|/4$ we have
\begin{equation}\label{smallball}
\|\chi_B\|_{\mathcal M^{p}(\varphi,v)'}\lesssim r_B^{n+\frac{\lambda_1}{p}} v(B)^{\frac{\lambda_2-n}{np}},
\end{equation}
and for $B$ centered at the origin we have
\begin{equation}\label{bigball}
\|\chi_B\|_{\mathcal M^{p}(\varphi,v)'}\lesssim r_B^{n+\frac{\lambda_1}{p}+ \frac{\alpha(\lambda_2-n)}{np}}w(B)^{\frac{\lambda_2-n}{np}}.
\end{equation}

\begin{proposition}\label{apextended}
Let $1\le p<\infty$, $0\le \lambda_1, \lambda_2 <n$ and $0<\lambda_1 + \lambda_2 < n$. Let $w\in A_p$ satisfying \eqref{rh2}, and let $\alpha$ be as in \eqref{bertiz5}. If $v(x)=|x|^\alpha w(x)$, then $v\in A(\mathcal M^{p}(\varphi))$. 

Even more if $w\in RD_{\delta}$ with $\delta>\lambda_1/(n-\lambda_2)$, then $v$ satisfies the stronger condition \eqref{extrasuf} for $q=p$, and also for some $1<q<p$ if $p>1$. 

The same result holds for $w\in A_p\cap RD_{\delta}$, $0<\lambda_2 \le n$, $0>\lambda_1>-\delta \lambda_2$, and $\alpha$ as in \eqref{bertiz5}. 
\end{proposition}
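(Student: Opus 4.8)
The plan is to run the proof of Proposition \ref{apsuf} essentially verbatim, the only substitutions being that the elementary bounds used there for $\|\chi_B\|_{\mathcal M^p(\varphi,w)}$ and $\|\chi_B\|_{\mathcal M^p(\varphi,w)'}$ are replaced by a Remark \ref{rm73}-type bound for the weight $v$ and by the estimates \eqref{smallball} and \eqref{bigball} coming from Lemma \ref{key2}. So first I would record an upper bound for $\|\chi_B\|_{\mathcal M^p(\varphi,v)}$. By Remark \ref{rm22} only balls $Q\subset 2B$ enter the supremum, where $\frac{v(Q\cap B)}{\varphi(Q)}\le v(Q)^{1-\lambda_2/n}r_Q^{-\lambda_1}$. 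When $|c_B|>4r_B$ one has $|x|\sim|c_B|$ on $2B$, so $v\sim|c_B|^\alpha w$ there, and since $w$ satisfies \eqref{rh2}, i.e.\ $w\in RD_{\lambda_1/(n-\lambda_2)}$, the quantity $w(Q)^{1-\lambda_2/n}r_Q^{-\lambda_1}$ is maximal (up to constants) at $Q=B$; this yields $\|\chi_B\|_{\mathcal M^p(\varphi,v)}\lesssim (v(B)^{1-\lambda_2/n}|B|^{-\lambda_1/n})^{1/p}$. When $B$ is centered at the origin the same computation, now invoking \eqref{RDtheta} together with the admissible range \eqref{bertiz5} of $\alpha$, gives the bound with the powers of $r_B$ and of $w(B)$ matching the exponents in \eqref{bigball}; the case $0<\lambda_2\le n$, $0>\lambda_1>-\delta\lambda_2$ is exactly the one already recorded in Remark \ref{rm73}. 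Balls with $0<|c_B|\le 4r_B$ are absorbed into $B(0,5r_B)$, a ball of comparable measure, by monotonicity of the two norms under inclusion.

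Granting these bounds, $v\in A(\mathcal M^p(\varphi))$ follows by plain multiplication. For $|c_B|>4r_B$, multiply the displayed bound by \eqref{smallball} and divide by $|B|$: the result is a product of powers of $r_B$ and of $v(B)$, each with exponent $0$. For $B$ centered at the origin, multiply the corresponding bound by \eqref{bigball}; the powers of $r_B$ and of $w(B)$ cancel in the same way.

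For the stronger condition \eqref{extrasuf} I would start from the pointwise bound \eqref{pb74}, so that $\|M(\chi_B)^{1/s}\|_{\mathcal M^p(\varphi,v)'}\lesssim \sum_{k\ge 1}2^{-kn/s}\|\chi_{2^kB}\|_{\mathcal M^p(\varphi,v)'}$. If $B$ is centered at the origin, so is every $2^kB$, and one applies \eqref{bigball} to each summand, bounding $w(2^kB)$ from below by $2^{kn\delta}w(B)$ via $w\in RD_\delta$; the resulting geometric series converges once $s$ is taken close enough to $1$, precisely because $\delta>\lambda_1/(n-\lambda_2)$ (in the last case of the statement $\lambda_1/(n-\lambda_2)<0$, so convergence is automatic). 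If $|c_B|>4r_B$ one splits the sum at the index $k_0$ with $2^{k_0}\sim|c_B|/r_B$: for $k\le k_0$ the dilate $2^kB$ still satisfies $r_{2^kB}\le|c_{2^kB}|/4$ and one uses \eqref{smallball} with $v(2^kB)\sim|c_B|^\alpha w(2^kB)$, while for $k>k_0$ one passes to $\widetilde{2^kB}$, a ball centered at the origin of radius $\sim 2^kr_B$, and uses \eqref{bigball}; both partial sums are geometric, and after multiplying by the bound for $\|\chi_B\|_{\mathcal M^p(\varphi,v)}$ and dividing by $|B|$ one is left (in the second partial sum) with a negative power of $|c_B|/r_B>4$, hence a bounded quantity. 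The reduction to some $q\in(1,p)$ when $p>1$ is exactly as at the end of the proof of Proposition \ref{apsuf}, using that $w\in A_q$ for some such $q$.

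The step I expect to be the main obstacle is the bookkeeping in this split sum: one must keep track of the factor $|x|^\alpha$ as $2^kB$ migrates from the region far from the origin, where it is essentially the constant $|c_B|^\alpha$ and \eqref{smallball} applies, into the region around the origin, where \eqref{RDtheta} and the constraint \eqref{bertiz5} on $\alpha$ must be invoked and \eqref{bigball} applies, and then check that one choice of $s$ close to $1$ makes every geometric series summable simultaneously and that the two partial sums match at $k=k_0$. Everything else is a direct transcription of the computations already carried out for Proposition \ref{apsuf}.
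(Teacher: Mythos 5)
Your proposal follows essentially the same route as the paper: the same Remark \ref{rm73}-style bounds for $\|\chi_B\|_{\mathcal M^{p}(\varphi,v)}$ in the two regimes ($|c_B|>4r_B$ with $v\sim|c_B|^\alpha w$ on $2B$, and $B$ centered at the origin), combined with \eqref{smallball} and \eqref{bigball} from Lemma \ref{key2}, and for \eqref{extrasuf} the pointwise bound \eqref{pb74} with the dyadic sum split at $2^{k_0}\sim|c_B|/r_B$, passing to $\widetilde{2^kB}$ for the large-$k$ terms. The "main obstacle" you flag is exactly the bookkeeping the paper itself leaves to the reader, so your write-up is, if anything, slightly more explicit than the original.
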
 

\begin{proof}
(a) Let $B$ be such that $|c_B|>4r_B$. Then for $y\in B$ we have $|y|^\alpha\sim |c_B|^\alpha$, from which it follows that $v(B)\sim |c_B|^\alpha w(B)$. Proceeding as in the proof of Proposition \ref{apsuf} we have instead of \eqref{norma1}
\begin{equation*}
\|\chi _{B}\|_{\mathcal M^{p}(\varphi,v)} \le \left( \frac {[|c_B|^\alpha w(B)]^{1-\frac{\lambda_2}n}}{|B|^{\frac{\lambda_1}n} }\right)^{\frac 1p}.
\end{equation*}
(This is obtained as in Remark \ref{rm73} observing that for $x\in 2B$ we have $|x|\sim |c_B|$.) Using this estimate together with \eqref{smallball} we see that $v$ satisfies the $A(\mathcal M^{p}(\varphi))$ condition. 

(b) Let $B$ be centered at the origin. The bound for $\|\chi _{B}\|_{\mathcal M^{p}(\varphi,v)}$ is now
\begin{equation*}
\|\chi _{B}\|_{\mathcal M^{p}(\varphi,v)} \le \left( \frac {[r_B^\alpha w(B)]^{1-\frac{\lambda_2}n}}{|B|^{\frac{\lambda_1}n} }\right)^{\frac 1p}.
\end{equation*}
For $\|\chi _{B}\|_{\mathcal M^{p}(\varphi,v)'}$ we have \eqref{bigball}. It follows that $v\in A(\mathcal M^{p}(\varphi))$.

(c) For the stronger condition \eqref{extrasuf} when $w\in RD_{\delta}$, we proceed as in the proof of Proposition \ref{apsuf}. The bounds of $\|\chi _{B}\|_{\mathcal M^{p}(\varphi,v)}$ are in parts (a) and (b). On the other hand, we use the pointwise bound \eqref{pb74} and take the bounds  \eqref{smallball} and \eqref{bigball} for the norm of $\chi_{2^kB}$ in $\mathcal M^{p}(\varphi,v)'$. If $B$ is centered at the origin, all the balls $2^kB$ are also centered at the origin and we only need \eqref{bigball}. If $B$ is such that $|c_B|>4r_B$, we use  \eqref{smallball} for those values of $k$ such that $|c_B|>4\cdot 2^k r_B$, and \eqref{bigball} for the other values of $k$, after replacing $2^kB$ with $\widetilde{2^kB}$ (using the notation introduced in Section \ref{zortzi}). The details are left to the reader.
\end{proof}

\subsection{The case of weighted local Morrey spaces}

Lemma \ref{emblema} is clearly true for $\mathcal {LM}^{p}(\varphi,v)$, which contains $\mathcal M^{p}(\varphi,v)$.

Part (a) of Proposition \ref{rhprop} holds for $A(\mathcal {LM}^{p}(\varphi))$ weights if $B_1$ and $B_2$ are centered at the origin (that is, $w\in RD_{\frac {\lambda_1}{n-\lambda_2}}(0)$), and part (b) holds with $A_{\frac{np+\lambda_1}{n-\lambda_2},0}$. (The class $A_{p,0}$ is like the usual $A_p$ class, but the condition is restricted to balls centered at the origin.)

The first part of Proposition \ref{apsuf} for the local weights is the following: if $w\in A_{p,0}\cap RD_{\frac {\lambda_1}{n-\lambda_2}}(0)$, then $w\in A_0(\mathcal{LM}^p(\varphi))$ (we have \eqref{norma1} due to the embedding of $\mathcal{M}^p(\varphi,w)$ into $\mathcal{LM}^p(\varphi,w)$). In the proof of the second part of the proposition we cannot use now that $w\in A_{p,0}$ implies $w\in A_{q,0}$ for some $q<p$ (see \cite{DMO13}). But when we want to apply Theorem \ref{teo1Kloc}, we also require $w\in A_{p, \textrm{loc}}$, and $A_{p,0}\cap  A_{p, \textrm{loc}}=A_p$. Therefore, Proposition \ref{apsuf} is valid for weighted local Morrey spaces instead of weighted global Morrey spaces. The same thing applies to the extension to weights of the form $|x|^\alpha w(x)$ of Proposition \ref{apextended}. Here we only need part (b) of the proof of the proposition and \eqref{bigball}.

\subsection{Applications to operators}
Combining the extrapolation results of Section \ref{sei} with the particular results in this section, we obtain immediately for the corresponding weighted global and local Morrey spaces the boundedness of the operators which are bounded on $L^p(w)$ with $w\in A_p$. In particular, we can mention Calder\'on-Zygmund operators and their associated maximal operators, square functions (of Littlewood-Paley type and others), rough operators with hmogeneous kernel $\Omega(x/|x|)|x|^{-n}$ and $\Omega\in L^\infty$ with vanishing integral, Bochner-Riesz operators at the critical index, commutators, vector-valued extensions, etc. Weak-type estimates for $p=1$ are obtained too (except for commutators). In the setting of weighted local Morrey spaces several of these results are new.

\subsection{Power weights}

Let us consider the particular case $w(x)=|x|^\beta$ for $\beta>-n$. These weights were thoroughly studied in \cite{DR20} for $\varphi$ as in \eqref{embed} and a larger range of values of $\lambda_1$ and  $\lambda_2$. Let us apply the results in this section for glocal and local Morrey spaces.

First notice that $|x|^\beta\in RD_\delta$ for $\delta=\min (1, 1+\beta/n)$. In particular, the exponent is less than $1$ only for $\beta<0$. If we restrict to balls centered at the origin, $|x|^\beta\in RD_{1+\beta/n}(0)$. This is the value of $\theta$ appearing in \eqref{RDtheta} for power weights. 

Let $0\le \lambda_1,\lambda_2<n$ and $0<\lambda_1+\lambda_2<n$. Then $w\in A_p\cap RD_{\frac {\lambda_1}{n-\lambda_2}}$ if and only if
\begin{equation}\label{powersmall}
-\frac{n(n-\lambda_1-\lambda_2)}{n-\lambda_2}\le \beta <n(p-1). 
\end{equation}
The left bound is imposed by the reverse doubling condition and the right one by the $A_p$ condition. (The same bounds are imposed if we consider only balls centered at the origin.) Those weights are in $A(\mathcal M^{p}(\varphi))$ according to Proposition \ref{apsuf}. But we can improve the right bound using Proposition \ref{apextended}. Indeed, if $w(x)=|x|^\gamma\in A_p$ we have $v(x)=|x|^{\alpha+\gamma}\in A(\mathcal M^{p}(\varphi))$ with $\alpha$ given by \eqref{bertiz5}: 
\begin{equation*}
0<\alpha<\frac{n}{n-\lambda_2}\left[\lambda_1+\lambda_2\left(1+\frac{\gamma}{n}\right)\right].
\end{equation*}
Since $\gamma$ can be arbitrarily close to $n(p-1)$, the range in \eqref{powersmall} can be improved to 
\begin{equation}\label{powerbig}
-\frac{n} {n-\lambda_2}(n-\lambda_1-\lambda_2)\le \beta <\frac{n} {n-\lambda_2}[n(p-1)+\lambda_1+\lambda_2]. 
\end{equation}
In the case of $0<\lambda_2<n$ and negative $\lambda_1$, the left bound is $-n$. In the case $\lambda_2=n$, the range is $-n<\beta<\infty$. Under the conditions of Proposition \ref{rhprop} the range in \eqref{powerbig} is sharp for $w\in A(\mathcal M^{p}(\varphi))$, because part (b) imposes the right-hand side. It is also sharp for $0<\lambda_2<n$ and negative $\lambda_1$ (see  \cite[Proposition 3.2]{DR20}).

For the stronger condition \eqref{extrasuf} the left point $\beta=-\frac{n(n-\lambda_1-\lambda_2)}{n-\lambda_2}$ is lost because we need a slightly better reverse doubling condition. 

For the relevant cases appearing in the literature these results particularize to the following. Let $w(x)=|x|^\beta$ and
\begin{itemize}
\item $\varphi(B)=r_B^\lambda$, $0<\lambda<n$ (Samko-type): $-n+\lambda\le \beta<n(p-1)+\lambda$;
\item $\varphi(B)=w(B)^{\lambda/n}$, $0<\lambda<n$ (Komori-Shirai-type): $-n<\beta< \frac{n}{n-\lambda}[n(p-1)+\lambda]$;
\item $\varphi(B)=r_B^{-\lambda}w(B)$, $0<\lambda<\min(n,n+\beta)$ (Poelhuis-Torchinsky-type): $-n+\lambda<\beta<\infty$. This case corresponds to $\lambda_1=-\lambda$ and $\lambda_2=n$, and the left bound comes from $\lambda_1+\delta \lambda_2>0$, imposed to ensure that $\varphi$ is reverse doubling. However, it was proved in \cite[Proposition 2.10(c)]{DR20} that if $\lambda_1+\delta \lambda_2<0$ the Morrey space is $\{0\}$.
\end{itemize}

The operator $M$ is bounded on $\mathcal{M}^p(\varphi)$ and on $\mathcal{LM}^p(\varphi)$ if $1<p<\infty$ and $\beta$ is in the given range. (In \cite{DR20} the left endpoint needed a specific proof because a reverse H\"older inequality was used instead of a reverse doubling one.) For $p=1$ it is bounded from $\mathcal{M}^1(\varphi)$ to $W\mathcal{M}^1(\varphi)$ and from $\mathcal{LM}^1(\varphi)$ to $W\mathcal{LM}^1(\varphi)$. For $M_0$ the estimate is of strong type also for $p=1$. 

For operators $T$ for which the pairs $(|f|,|Tf|)$ satisfy \eqref{hypextra}, the same results hold for $1<p<\infty$ except for the left endpoint, when it is reached in the range of $\beta$. The weak estimates for $p=1$ also hold if \eqref{hypextra} is satisfied for $p=1$ with a weak-type estimate and $A_1$ weights.


\section*{Acknowledgements}

The first author is supported by the grants MTM2017-82160-C2-2-P of the Ministerio de Econom\'{\i}a y Competitividad (Spain) and FEDER, and IT1247-19 of the Basque Gouvernment. 
\par\smallskip



\begin{thebibliography}{99}

\bibitem{BS88} \textsc{Bennett, C.; Sharpley R.}, \textit{Interpolation of Operators}. Academic Press. 1988. 

\bibitem{DMO13} \textsc{Duoandikoetxea, J.; Mart\'in-Reyes, F. J.; Ombrosi, S.}: Calder\'on weights as Muckenhoupt weights. Indiana Univ. Math. J. 62 (2013), 891--910.

\bibitem{DR18} \textsc{Duoandikoetxea, J.; Rosenthal, M.}: Extension and boundedness of operators on Morrey spaces from extrapolation techniques and embeddings.  J. Geom. Anal. 28 (2018), 3081--3108.

\bibitem{DR19} \textsc{Duoandikoetxea, J.; Rosenthal, M.}:
Boundedness of operators on certain weighted Morrey spaces beyond the Muckenhoupt range. Potential Anal., to appear. DOI: 10.1007/s11118-019-09805-8. \url{https://arxiv.org/abs/1910.13285}.

\bibitem{DR20} \textsc{Duoandikoetxea, J.; Rosenthal, M.}:
Boundedness properties in a family of weighted Morrey spaces with emphasis on power weights. J. Funct. Anal.  279 (2020), no. 8, 108687, 26 pp.

\bibitem{Gu12} \textsc{Guliyev, V. S.}: 
Generalized weighted Morrey spaces and higher order commutators of sublinear operators. 
Eurasian Math. J. \textbf{3} (2012), 33--61.

\bibitem{HSV14} \textsc{Harboure, E.; Salinas, O.; Viviani, B.}:
Local maximal function and weights in a general setting.
Math. Ann. \textbf{358} (2014), 609--628.

\bibitem{KS09} \textsc{Komori, Y.; Shirai, S.}: Weighted Morrey spaces and a singular integral operator. Math. Nachr. \textbf{282} (2009), 219--231.

\bibitem{LS10} \textsc{Lin, C.-C.; Stempak, K.}: Local Hardy-Littlewood maximal operator. Math. Ann. 348 (2010), 797--813. 

\bibitem{LT79} \textsc{Lindenstrauss, J.; Tzafriri, L.}:
\textit{Classical {B}anach spaces. {II}}.
Springer-Verlag, Berlin, 1979.

\bibitem{MST18}  \textsc{Masty\l o, M.; Sawano, Y.; Tanaka, H.}: 
Morrey-type space and its K\"othe dual space. 
Bull. Malays. Math. Sci. Soc. 41 (2018), 1181--1198. 

\bibitem{NST19} \textsc{Nakamura, S.; Sawano, Y.; Tanaka H.}: Weighted local Morrey spaces. Ann. Acad. Sci. Fenn. Math., \textbf{45} (2020), 67--93. 

\bibitem{PT15} \textsc{Poelhuis, J.; Torchinsky, A.}:
Weighted local estimates for singular integral operators. 
Trans. Amer. Math. Soc. \textbf{367} (2015), 7957--7998.

\bibitem{RT15} \textsc{Rosenthal, M.; Triebel, H.}:
Morrey spaces, their duals and preduals. 
Rev. Mat. Complut. 28 (2015), 1--30. 

\bibitem{Ru18}  \textsc{Rutski\u{\i}, D. V.}:
$A_1$-regularity and boundedness of Riesz transforms in Banach
   lattices of measurable functions. 
Zap. Nauchn. Sem. S.-Peterburg. Otdel. Mat. Inst. Steklov. (POMI) 447 (2016), 113--122; translation in
J. Math. Sci. (N.Y.) 229 (2018), no. 5, 561--567. 

\bibitem{Ry01} \textsc{Rychkov, V. S.}: Littlewood-Paley theory and function spaces with $A^{\text{loc}}_p$ weights. Math. Nachr. \textbf{224} (2001), 145--180.

\bibitem{Sam09} \textsc{Samko, N.}: Weighted Hardy and singular operators in Morrey spaces. J. Math. Anal. Appl. {\bfseries 350} (2009), 56--72.

\bibitem{Sam13} \textsc{Samko, N.}: 
On a Muckenhoupt-type condition for Morrey spaces.
Mediterr. J. Math. \textbf{10}, 941--951 (2013). 

\bibitem{SST11} \textsc{Sawano, Y.; Sugano, S.; Tanaka, H.}: Generalized fractional integral operators and fractional maximal operators in the framework of Morrey spaces. Trans. Amer. Math. Soc. \textbf{363} (2011), 6481--6503.

\bibitem{ST15} \textsc{Sawano, Y.; Tanaka, H.}: The Fatou property of block spaces. J. Math. Sci. Univ. Tokyo 22 (2015), 663--683.

\bibitem{ST89} \textsc{Str\"omberg, J.-O.; Torchinsky, A.}:
\textit{Weighted Hardy spaces}.
Lecture Notes in Mathematics, 1381. Springer-Verlag, Berlin, 1989.

\bibitem{Ta15} \textsc{Tanaka, H.}: Two-weight norm inequalities on Morrey spaces. Ann. Acad. Sci. Fenn. Math. \textbf{40} (2015), 773--791.  


\end{thebibliography}
\end{document}